\newtheorem{definition}{Definition}[section]
\newtheorem{theorem}[definition]{Theorem}
\newtheorem{lemma}[definition]{Lemma}
\newtheorem{corollary}[definition]{Corollary}
\newtheorem{remark}[definition]{Remark}
\newtheorem{example}[definition]{Example}
\newtheorem{conjecture}[definition]{Conjecture}
\newtheorem{problem}[definition]{Problem}
\newtheorem{proposition}[definition]{Proposition}
\begin{document} 

\title{\bf The $S_3$-symmetric tridiagonal algebra
}
\author{
Paul Terwilliger}
\date{}
\maketitle
\begin{abstract}
The tridiagonal algebra is defined by two generators and two relations, called the tridiagonal relations. Special cases of the tridiagonal algebra include the $q$-Onsager algebra, the positive part of the $q$-deformed enveloping algebra $U_q({\widehat{\mathfrak{sl}}}_2)$,
and the enveloping algebra of the Onsager Lie algebra.
 In this paper, we introduce the $S_3$-symmetric tridiagonal algebra. This algebra has six generators. The generators can be identified with the vertices of a regular hexagon, such that nonadjacent generators commute and adjacent generators satisfy a pair of tridiagonal relations. For a $Q$-polynomial distance-regular graph $\Gamma$ we turn the tensor power $V^{\otimes 3}$ of the standard module $V$ into a module for an $S_3$-symmetric tridiagonal algebra.
 We investigate in detail the case in which $\Gamma$
 is a Hamming graph. We give some conjectures and open problems.
 \bigskip

\noindent
{\bf Keywords}. Distance-regular graph; Hamming graph; $q$-Onsager algebra; $Q$-polynomial property; tridiagonal algebra.
\hfil\break
\noindent {\bf 2020 Mathematics Subject Classification}.
Primary: 05E30.
 \end{abstract}
 
\section{Introduction}
The algebras discussed in this paper are motivated by a combinatorial object called a $Q$-polynomial distance-regular graph  \cite{bbit, banIto, bcn, dkt, int}.
The first algebra under discussion is called the tridiagonal algebra \cite[Definition~3.9]{qSerre}.
For scalars $\beta, \gamma, \gamma^*, \varrho, \varrho^*$ the tridiagonal algebra $T=T(\beta, \gamma, \gamma^*, \varrho, \varrho^*)$ is defined by two
generators $A, A^*$ and  two relations
\begin{align*}
\lbrack A, A^2 A^* - \beta A A^* A + A^* A^2 - \gamma(A A^*+A^*A) - \varrho A^* \rbrack &=0, \\ 
\lbrack A^*, A^{*2} A - \beta A^* A A^* + A A^{*2} - \gamma^*(A^*A+AA^*) - \varrho^* A \rbrack &=0, 
\end{align*}
\noindent where $\lbrack B,C\rbrack=BC-CB$. The above relations are called the tridiagonal relations  \cite[Section~3]{qSerre}, and they first appeared in \cite[Lemma~5.4]{tSub3}. Special cases of the
tridiagonal algebra include the $q$-Onsager algebra \cite{qOnsager1, qOnsager2, augIto}, the positive part $U^+_q$ of the $q$-deformed enveloping algebra $U_q({\widehat{\mathfrak{sl}}}_2)$  \cite[Example~1.7, Remark~10.2]{someAlg}, 
and the enveloping algebra of the Onsager Lie algebra \cite[Example~3.2, Remark~3.8]{qSerre}. Some notable papers about the tridiagonal algebra are \cite{someAlg, augIto, TDqRacah, 2LT, madrid}.
\medskip

\noindent  We now explain how the tridiagonal algebra appears in the theory of distance-regular graphs. Let $\Gamma$
denote a $Q$-polynomial distance-regular graph \cite[Definition~11.2]{int}. There are some well-known scalars $\beta, \gamma, \gamma^*, \varrho, \varrho^*$ that are used to describe the
eigenvalue sequence and dual eigenvalue sequence of the $Q$-polynomial structure \cite[Lemma~5.4]{tSub3}. Let $X$ denote the vertex set of $\Gamma$. The standard module $V$ of $\Gamma$ is the vector space with basis $X$. 
According to \cite[Lemma~5.4]{tSub3}, for  $x \in X$  the vector space $V$ becomes a module for $T(\beta, \gamma, \gamma^*, \varrho, \varrho^*)$ on which
 $A$ acts as the adjacency map and  $A^*$ acts as the dual adjacency map with respect to $x$ \cite[Section~11]{int}.
\medskip

\noindent Let $S_3$ denote the symmetric group on the set $\lbrace 1,2,3\rbrace$. In the present paper, we introduce a generalization of the tridiagonal algebra called the
$S_3$-symmetric tridiagonal algebra. This algebra is described as follows.
For scalars $\beta, \gamma, \gamma^*, \varrho, \varrho^*$ the $S_3$-symmetric tridiagonal algebra $\mathbb T= \mathbb T(\beta, \gamma, \gamma^*, \varrho, \varrho^* )$
is defined by generators 
\begin{align*}
A_i, \quad A^*_i \qquad \quad i \in \lbrace 1,2,3\rbrace
\end{align*}
and the following relations.
\begin{enumerate}
\item[\rm (i)] For $i,j \in \lbrace 1,2,3\rbrace$,
\begin{align*}
\lbrack A_i, A_j \rbrack=0, \qquad \qquad \lbrack A^*_i, A^*_j \rbrack=0.
\end{align*}
\item[\rm (ii)] For $i \in \lbrace 1,2,3\rbrace$,
\begin{align*}
\lbrack A_i, A^*_i \rbrack=0.
\end{align*}
\item[\rm (iii)] For distinct $i,j \in \lbrace 1,2,3 \rbrace$,
\begin{align*}
\lbrack A_i, A_i^2 A_j^* - \beta A_i A_j^* A_i + A_j^* A_i^2 - \gamma(A_i A_j^*+A_j^*A_i) - \varrho A_j^* \rbrack &=0,  \\
\lbrack A_j^*, A_j^{*2} A_i - \beta A_j^* A_i A_j^* + A_i A_j^{*2} - \gamma^*(A_j^*A_i+A_iA_j^*) - \varrho^* A_i \rbrack &=0. 
\end{align*}
\end{enumerate}

\noindent  Our  presentation of $\mathbb T$  is described by the diagram below:
\vspace{.5cm}

\begin{center}
\begin{picture}(0,60)
\put(-100,0){\line(1,1.73){30}}
\put(-100,0){\line(1,-1.73){30}}
\put(-70,52){\line(1,0){60}}
\put(-70,-52){\line(1,0){60}}
\put(-10,-52){\line(1,1.73){30}}
\put(-10,52){\line(1,-1.73){30}}

\put(-102,-3){$\bullet$}
\put(-72.5,49){$\bullet$}
\put(-72.5,-55){$\bullet$}

\put(16.5,-3){$\bullet$}
\put(-13.5,49){$\bullet$}
\put(-13.5,-55){$\bullet$}

\put(-120,-3){$A_1$}
\put(-11,60){$A_2$}
\put(-11,-66){$A_3$}

\put(26,-3){$A^*_1$}
\put(-82,-66){$A^*_2$}
\put(-82,60){$A^*_3$}

\end{picture}
\end{center}
\vspace{2.5cm}

\hspace{2.2cm}
 Fig. 1.  Nonadjacent generators commute. \\
${} \qquad \quad \quad \;\;$ Adjacent generators satisfy the tridiagonal relations.
\vspace{.5cm}

\noindent  We now summarize our main results. Recall the $Q$-polynomial distance-regular graph $\Gamma$ and related scalars $\beta, \gamma, \gamma^*, \varrho, \varrho^*$.
Recall  the standard module $V$,
and consider the tensor power $V^{\otimes 3} = V \otimes V \otimes V$.  We will turn $V^{\otimes 3}$ into a module for $\mathbb T= \mathbb T(\beta, \gamma, \gamma^*, \varrho, \varrho^*)$. 
We will show that the $\mathbb T$-module $V^{\otimes 3}$ has a unique irreducible $\mathbb T$-submodule $\Lambda$ that contains the vector $\sum_{x,y,z \in X} x \otimes y \otimes z$. The $\mathbb T$-submodule
$\Lambda$ is called fundamental. We display some vectors $P_{h,i,j}\in \Lambda$ that are common eigenvectors for $A^*_1, A^*_2, A^*_3$. 
 We display some vectors $Q_{h,i,j} \in \Lambda$ that are common eigenvectors for $A_1, A_2, A_3$. For a subgroup $G$ of the automorphism group of $\Gamma$,
 we show that the natural action of $G$ on $V^{\otimes 3}$ commutes with the 
  $\mathbb T$-action on $V^{\otimes 3}$. We show that $\Lambda$ is contained in the $\mathbb T$-submodule of $V^{\otimes 3}$ consisting of the
 vectors that are fixed by everything in $G$. We consider the case in which $\Gamma$ is a Hamming graph $H(D,N)$ with $D\geq 1$ and $N\geq 3$. For this case, we give an explicit basis for $\Lambda$,
 and the action of the $\mathbb T$-generators on this basis. We also show that ${\rm dim} \,\Lambda = \binom{D+4}{4}$. The graph $H(1,N)$ is the complete graph $K_N$, and we describe
 this special case in detail. We finish with some conjectures and open problems.
\medskip

\noindent We would like  to acknowledge the earlier works about $Q$-polynomial distance-regular graphs that feature a 
tensor power of the standard module $V$. As far as we know, the earliest such work is the 1978 article \cite{norton} by
Cameron, Goethals, and Seidel. In that article $V^{\otimes 2}$ appears in Proposition 5.1 and Remarks 5.3, 5.4, while $V^{\otimes 3}$ appears in the proofs of Propositions 4.1,  5.1.
Much  of \cite{norton} was summarized and popularized in the 1985 book by Bannai and Ito \cite[Section~2.8]{banIto}.
In the 1987 article \cite{balanced} by the present author,  $V^{\otimes 3}$ appears in Section 2 in connection with the balanced-set condition. 
 In the 1995 article \cite{Jaeger2} by Jaeger,  $V^{\otimes 3}$ appears in Section 5 in connection with spin models and the star-triangle relation.
 In the 1995 Ph.D. thesis of Dickie \cite{dickieThesis}, Chapter 4 contains a number of calculations that are implicitly about $V^{\otimes 3}$ and $V^{\otimes 4}$.
 The 1998 works of Suzuki \cite{suzuki1, suzuki2} contain numerous calculations that are implicitly about   $V^{\otimes 3}$ and $V^{\otimes 4}$. 
In their 2003 article \cite{ada2},  Chan,  Godsil, and Munemasa implicitly use $V^{\otimes 2}$ to investigate Jones Pairs.
In his 2021 article \cite{scaffold} about scaffolds, Bill Martin develops a comprehensive diagrammatic approach to computations involving $V^{\otimes n}$ for arbitrary $n$.
As Martin explains in the article, the approach is based on unpublished work of Arnold Neumaier going back to 1989 or before. 
In their 2022 article \cite{safet},
Neumaier and Penji{\' c} develop the diagrammatic approach  using a somewhat different point of view.
The present author acknowledges that he first learned about
the diagrammatic approach from conversations with Neumaier around 1989.
\medskip

\noindent The present paper is organized as follows. Section 2 contains some preliminaries.
In Section 3 we review the tridiagonal algebra $T$.
In Section 4 we introduce the $S_3$-symmetric tridiagonal algebra $\mathbb T$, and establish some basic facts about it.
In Sections 5--8, we use a $Q$-polynomial distance-regular graph $\Gamma$ to turn the tensor power $V^{\otimes 3}$ of the standard module $V$ into a $\mathbb T$-module.
In Section 9 we introduce the fundamental $\mathbb T$-submodule $\Lambda$ of $V^{\otimes 3}$.
In Section 10 we give a group action that commutes with the $\mathbb T$-action on $V^{\otimes 3}$.
In Section 11 we describe $\Lambda$ under the assumption that  $\Gamma$ is a Hamming graph.
In Section 12 we give some conjectures and open problems.

\section{Preliminaries} 
In this section, we review some notation and basic concepts.
Recall the natural numbers $\mathbb N = \lbrace 0,1,2,\ldots \rbrace$.
Let $\mathbb F$ denote a field.
Every vector space and tensor product discussed, is understood to be over $\mathbb F$.
Every algebra without the Lie prefix discussed, is understood to be associative, over $\mathbb F$, and have a multiplicative identity.
A subalgebra has the same multiplicative identity as the parent algebra.
Let $V$ denote a nonzero vector space. The algebra ${\rm End}(V)$  consists of the $\mathbb F$-linear maps
from $V$ to $V$.
An element $A \in {\rm End}(V)$ is said to be {\it diagonalizable} whenever
$V$ is spanned by the eigenspaces of $A$. Assume that $A$ is diagonalizable, and let $\lbrace V_i \rbrace_{i=0}^d$ 
denote an ordering of the eigenspaces of $A$. The sum $V=\sum_{i=0}^d V_i$ is direct.
For $0 \leq i \leq d$ let $\theta_i$ denote the eigenvalue of $A$ for $V_i$.
For $0 \leq i \leq d$ define $E_i \in {\rm End}(V)$ such that
$(E_i-I) V_i=0$ and $E_iV_j=0$ if $j \not=i$ $(0 \leq j \leq d)$. Thus $E_i$ is the projection from $V$ onto $V_i$. We call $E_i$ the {\it primitive idempotent} of $A$ associated with $V_i$ (or $\theta_i$).
By linear algebra 
(i) $A = \sum_{i=0}^d \theta_i E_i$; 
(ii) $E_i E_j = \delta_{i,j} E_i$ $ (0 \leq i,j\leq d)$;
(iii) $I = \sum_{i=0}^d E_i$; 
(iv) $V_i = E_iV$ $ (0 \leq i \leq d)$; 
(v) $AE_i = \theta_i E_i = E_iA$ $(0 \leq i \leq d)$. Moreover
\begin{align*}
  E_i=\prod_{\stackrel{0 \leq j \leq d}{j \neq i}}
          \frac{A-\theta_jI}{\theta_i-\theta_j} \qquad \qquad (0 \leq i \leq d).
\end{align*}

\noindent
Let $0 \not= q \in \mathbb F$.
For elements $B, C$ in any algebra, define
\begin{align*}
\lbrack B,C\rbrack=BC-CB, \qquad \qquad \lbrack B,C \rbrack_q = q BC-q^{-1}CB.
\end{align*}
The symmetric group $S_3$ consists of the permutations of the set $\lbrace 1,2,3\rbrace$.
For matrix representations we use the conventions of  \cite[Section~2]{LSnotes}.

\section{The tridiagonal algebra}
In this section, we recall the tridiagonal algebra \cite{qSerre}.

\begin{definition} \label{def:T} \rm (See \cite[Definition~3.9]{qSerre}.)  
 For $\beta, \gamma, \gamma^*, \varrho, \varrho^* \in \mathbb F$ the algebra $T=T(\beta, \gamma, \gamma^*, \varrho, \varrho^* )$
is defined by generators $A, A^*$ and relations
\begin{align}
\lbrack A, A^2 A^* - \beta A A^* A + A^* A^2 - \gamma(A A^*+A^*A) - \varrho A^* \rbrack &=0, \label{eq:TD1}  \\
\lbrack A^*, A^{*2} A - \beta A^* A A^* + A A^{*2} - \gamma^*(A^*A+AA^*) - \varrho^* A \rbrack &=0. \label{eq:TD2}
\end{align}
We call $T$ the {\it tridiagonal algebra}. The relations \eqref{eq:TD1}, \eqref{eq:TD2} are called the {\it tridiagonal relations}.
\end{definition}

\begin{remark}\rm As far as we know, the relations \eqref{eq:TD1}, \eqref{eq:TD2} first appeared in \cite[Lemma~5.4]{tSub3}.
\end{remark}

\noindent We mention some special cases of the tridiagonal algebra.

\begin{lemma} {\rm (See \cite[Example~3.2, Remark~3.8]{qSerre}.) }
Assume that $\mathbb F$ has characteristic 0. For
\begin{align*}
\beta=2, \qquad \quad \gamma= \gamma^*=0, \qquad \quad \varrho \not=0, \qquad\quad  \varrho^* \not=0
\end{align*}
the tridiagonal relations become the Dolan/Grady relations
\begin{align*}
\lbrack A, \lbrack A, \lbrack A, A^* \rbrack \rbrack \rbrack = \varrho  \lbrack A ,A^*\rbrack, \qquad \quad
\lbrack A^*, \lbrack A^*, \lbrack A^*, A \rbrack \rbrack \rbrack =\varrho^* \lbrack A^*,A\rbrack.
\end{align*}
In this case,
 $T$ becomes the  enveloping algebra $U(O)$ for the  Onsager Lie algebra $O$.
\end{lemma}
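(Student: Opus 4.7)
The plan is to address the two assertions of the lemma in sequence, starting with the reduction of the tridiagonal relations to the Dolan/Grady form. The key observation is the elementary identity
\[
[A,[A,A^*]] = A^2 A^* - 2 A A^* A + A^* A^2,
\]
obtained by expanding the double commutator. Substituting $\beta = 2$ and $\gamma = 0$ into the inner expression of \eqref{eq:TD1}, the bracket $A^2 A^* - \beta A A^* A + A^* A^2 - \gamma(AA^* + A^*A) - \varrho A^*$ collapses to $[A,[A,A^*]] - \varrho A^*$. Taking the outer commutator with $A$ and using linearity gives
\[
[A,[A,[A,A^*]]] - \varrho [A, A^*] = 0,
\]
which is the first Dolan/Grady relation. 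The relation \eqref{eq:TD2} yields the second Dolan/Grady relation by the same calculation with $A \leftrightarrow A^*$ and $(\gamma, \varrho) \leftrightarrow (\gamma^*, \varrho^*)$.

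For the identification of $T(2, 0, 0, \varrho, \varrho^*)$ with $U(O)$, I would invoke the universal property of the enveloping algebra. The Onsager Lie algebra $O$ is, by definition, presented by two generators (here $A, A^*$) subject to the Dolan/Grady relations. Because these relations are expressed entirely by iterated Lie brackets, they make sense simultaneously in the free Lie algebra on $\{A, A^*\}$ and in the free associative algebra on the same symbols; the standard correspondence between Lie-algebraic and associative-algebraic presentations then shows that $U(O)$ is canonically isomorphic, as an associative algebra, to the quotient of the free associative algebra by the two-sided ideal generated by these same relations. By the reduction of the previous paragraph, that quotient is precisely $T(2, 0, 0, \varrho, \varrho^*)$.

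The only mildly delicate point is verifying the signs and coefficients in the double-commutator identity, which is routine but worth performing once explicitly to avoid transcription errors. The hypotheses $\mathrm{char}\,\mathbb F = 0$ and $\varrho, \varrho^* \neq 0$ enter only at the last step: they place us in the standard setting where $O$ is defined and infinite-dimensional, and they prevent the degeneration in which the inner bracket of \eqref{eq:TD1} or \eqref{eq:TD2} simplifies further and no longer recovers the Dolan/Grady form. No deeper structural result about $O$ is required for the argument above; in particular, the PBW theorem is invoked only implicitly, through the standard universal-property dictionary between Lie and associative presentations.
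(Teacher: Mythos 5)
Your proposal is correct and follows the standard argument: the paper itself offers no proof of this lemma (it simply cites \cite[Example~3.2, Remark~3.8]{qSerre}), and the cited argument is exactly your two steps — the expansion $[A,[A,A^*]]=A^2A^*-2AA^*A+A^*A^2$ specialized at $\beta=2$, $\gamma=\gamma^*=0$, followed by the universal-property dictionary identifying the associative algebra presented by Lie-polynomial relations with the enveloping algebra of the correspondingly presented Lie algebra.
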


\begin{lemma}  {\rm (See \cite[Example~1.7, Remark~10.2]{someAlg}.) }
For $\beta \not=\pm 2$,
\begin{align*}
\beta=q^2 + q^{-2}, \qquad \quad \gamma= \gamma^*=0, \qquad \quad \varrho= \varrho^*=0
\end{align*}
the tridiagonal relations become the $q$-Serre relations
\begin{align*}
\lbrack A, \lbrack A, \lbrack A, A^* \rbrack_q \rbrack_{q^{-1}} \rbrack =0, \qquad \quad 
\lbrack A^*, \lbrack A^*, \lbrack A^*, A \rbrack_q \rbrack_{q^{-1}} \rbrack =0.
\end{align*}
In this case, $T$ becomes the positive part  $U^+_q$ of the $q$-deformed enveloping algebra $U_q( {\widehat{\mathfrak{sl}}}_2)$.
\end{lemma}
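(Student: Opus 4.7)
The plan is a direct verification. I would show that, under the substitutions $\beta = q^2+q^{-2}$ and $\gamma=\gamma^*=\varrho=\varrho^*=0$, the inner bracketed expression appearing in each tridiagonal relation coincides, as an element of any algebra, with the corresponding inner bracketed expression in the $q$-Serre relation. Once this is established, the full relations are manifestly identical.

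Concretely, I would first expand $\lbrack A,A^*\rbrack_q = qAA^* - q^{-1}A^*A$ using the definition of $\lbrack\cdot,\cdot\rbrack_q$ recorded in Section 2. Then I would compute
\begin{align*}
\lbrack A,\lbrack A,A^*\rbrack_q\rbrack_{q^{-1}}
&= q^{-1}A(qAA^*-q^{-1}A^*A) - q(qAA^*-q^{-1}A^*A)A \\
&= A^2A^* - q^{-2}AA^*A - q^2AA^*A + A^*A^2 \\
&= A^2A^* - (q^2+q^{-2})AA^*A + A^*A^2.
\end{align*}
The right-hand side is exactly the inner bracketed expression in \eqref{eq:TD1} once we set $\beta=q^2+q^{-2}$, $\gamma=0$, $\varrho=0$. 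Therefore applying $\lbrack A,\,\cdot\,\rbrack$ to both sides shows that \eqref{eq:TD1} becomes the first $q$-Serre relation. An analogous computation with the roles of $A$ and $A^*$ interchanged handles \eqref{eq:TD2} and the second $q$-Serre relation.

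Given the symmetry and the straightforward nature of the expansion, there is no real obstacle here; the only thing to be careful about is the sign/exponent bookkeeping in the $q$-commutators, in particular that one must use $\lbrack A,\,\cdot\,\rbrack_{q^{-1}}$ as the outer bracket (not $\lbrack A,\,\cdot\,\rbrack_q$) so that the cross terms combine to produce the coefficient $-(q^2+q^{-2})$ rather than $-2$. Finally, I would remark that the identification $T = U_q^+$ (where $U_q^+$ is the positive part of $U_q(\widehat{\mathfrak{sl}}_2)$) then follows from the standard $q$-Serre presentation of $U_q^+$, with $A,A^*$ corresponding to the two Chevalley generators, referring to \cite[Example~1.7, Remark~10.2]{someAlg} for this identification.
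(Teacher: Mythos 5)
Your proof is correct. The paper itself does not include a proof of this lemma; it simply cites \cite[Example~1.7, Remark~10.2]{someAlg}, so there is no paper argument to compare against. Your direct expansion is the natural verification: with $[B,C]_q = qBC - q^{-1}CB$, the computation
\begin{align*}
[A,[A,A^*]_q]_{q^{-1}} = A^2A^* - (q^2+q^{-2})AA^*A + A^*A^2
\end{align*}
is exactly the inner bracketed expression in \eqref{eq:TD1} after the substitutions $\beta=q^2+q^{-2}$, $\gamma=0$, $\varrho=0$, and applying $[A,\,\cdot\,]$ recovers the first $q$-Serre relation; the second relation follows by the $A\leftrightarrow A^*$, $\gamma^*\leftrightarrow\gamma$, $\varrho^*\leftrightarrow\varrho$ symmetry. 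Your remark that the outer bracket must be $[\,\cdot\,,\,\cdot\,]_{q^{-1}}$ (not $[\,\cdot\,,\,\cdot\,]_q$) for the cross terms to combine to $-(q^2+q^{-2})$ is the one genuine bookkeeping point, and you handled it correctly. Deferring the identification $T\cong U_q^+$ to the cited reference, which establishes the $q$-Serre presentation of $U_q^+$, is appropriate since that is a nontrivial fact beyond the algebraic manipulation at hand.
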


\begin{lemma} {\rm (See \cite[Section~2]{qOnsager1}, \cite[Section~1]{qOnsager2}, \cite[Section~1.2]{augIto}.)}
For $\beta \not=\pm 2$,
\begin{align*}
\beta=q^2 + q^{-2}, \qquad \quad \gamma=\gamma^*=0, \qquad \quad \varrho= \varrho^*=-(q^2-q^{-2})^2
\end{align*}
the tridiagonal relations become the $q$-Dolan/Grady relations
\begin{align*}
\lbrack A, \lbrack A, \lbrack A, A^* \rbrack_q \rbrack_{q^{-1}} \rbrack &= (q^2-q^{-2})^2 \lbrack A^*,A\rbrack, \\
\lbrack A^*, \lbrack A^*, \lbrack A^*, A \rbrack_q \rbrack_{q^{-1}} \rbrack &=(q^2-q^{-2})^2 \lbrack A,A^*\rbrack.
\end{align*}
In this case, $T$ becomes the $q$-Onsager algebra $O_q$.
\end{lemma}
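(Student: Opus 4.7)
The plan is to verify by direct computation that, under the substitution $\beta = q^2 + q^{-2}$, $\gamma = \gamma^* = 0$, $\varrho = \varrho^* = -(q^2-q^{-2})^2$, each of the tridiagonal relations \eqref{eq:TD1}, \eqref{eq:TD2} is equivalent to the corresponding $q$-Dolan/Grady relation. By the symmetry $A \leftrightarrow A^*$ exchanging the two tridiagonal relations and the two $q$-Dolan/Grady relations simultaneously, it suffices to handle one of the pairs; I will work out \eqref{eq:TD1}.

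The first step is to rewrite the iterated $q$-commutator $[A, [A, A^*]_q]_{q^{-1}}$ as an ordinary polynomial in $A$ and $A^*$. Starting from $[A, A^*]_q = q A A^* - q^{-1} A^* A$ and expanding the outer $q^{-1}$-commutator yields
\begin{align*}
[A, [A, A^*]_q]_{q^{-1}} &= q^{-1} A (q A A^* - q^{-1} A^* A) - q (q A A^* - q^{-1} A^* A) A \\
&= A^2 A^* - (q^2 + q^{-2}) A A^* A + A^* A^2.
\end{align*}
With $\beta = q^2 + q^{-2}$ and $\gamma = 0$, this matches exactly the inner bracket argument of \eqref{eq:TD1}, modulo the $-\varrho A^*$ correction term.

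Using this identification, the relation \eqref{eq:TD1} rearranges to
$[A, [A, [A, A^*]_q]_{q^{-1}}] = \varrho\, [A, A^*]$.
Substituting $\varrho = -(q^2 - q^{-2})^2$ and replacing $-[A,A^*]$ by $[A^*,A]$ gives
$[A, [A, [A, A^*]_q]_{q^{-1}}] = (q^2 - q^{-2})^2\, [A^*, A]$,
which is the first $q$-Dolan/Grady relation. The second relation is produced by the same calculation with $A$ and $A^*$ interchanged, starting from \eqref{eq:TD2} and $\varrho^* = -(q^2-q^{-2})^2$.

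There is no genuine obstacle in this argument; the entire lemma reduces to one bracket expansion. The only place that requires care is the bookkeeping of the $q^{\pm 1}$ and $q^{\pm 2}$ coefficients in the nested $q$- and $q^{-1}$-commutators, so that the two middle terms combine correctly into the coefficient $-(q^2+q^{-2})$ of $AA^*A$. Once the relations have been shown to coincide on the nose, the identification $T \cong O_q$ follows from the standard defining presentation of the $q$-Onsager algebra used in the references \cite{qOnsager1, qOnsager2, augIto}.
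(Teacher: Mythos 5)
Your computation is correct: the expansion $[A,[A,A^*]_q]_{q^{-1}} = A^2A^* - (q^2+q^{-2})AA^*A + A^*A^2$ is right, and the rearrangement of \eqref{eq:TD1} into $[A,[A,[A,A^*]_q]_{q^{-1}}] = \varrho\,[A,A^*] = (q^2-q^{-2})^2[A^*,A]$ follows immediately. The paper itself offers no proof of this lemma (it only cites the references), and your direct verification is exactly the standard argument those references contain.
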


\section{The $S_3$-symmetric tridiagonal algebra}

\noindent In this section,  we introduce the $S_3$-symmetric tridiagonal algebra, and establish some basic facts about it.

\begin{definition}\label{def:symT} \rm For $\beta, \gamma, \gamma^*, \varrho, \varrho^* \in \mathbb F$ the algebra $\mathbb T= \mathbb T(\beta, \gamma, \gamma^*, \varrho, \varrho^* )$
is defined by generators 
\begin{align*}
A_i, \quad A^*_i \qquad \quad i \in \lbrace 1,2,3\rbrace
\end{align*}
and the following relations.
\begin{enumerate}
\item[\rm (i)] For $i,j \in \lbrace 1,2,3\rbrace$,
\begin{align*}
\lbrack A_i, A_j \rbrack=0, \qquad \qquad \lbrack A^*_i, A^*_j \rbrack=0.
\end{align*}
\item[\rm (ii)] For $i \in \lbrace 1,2,3\rbrace$,
\begin{align*}
\lbrack A_i, A^*_i \rbrack=0.
\end{align*}
\item[\rm (iii)] For distinct $i,j \in \lbrace 1,2,3 \rbrace$,
\begin{align*}
\lbrack A_i, A_i^2 A_j^* - \beta A_i A_j^* A_i + A_j^* A_i^2 - \gamma(A_i A_j^*+A_j^*A_i) - \varrho A_j^* \rbrack &=0,  \\
\lbrack A_j^*, A_j^{*2} A_i - \beta A_j^* A_i A_j^* + A_i A_j^{*2} - \gamma^*(A_j^*A_i+A_iA_j^*) - \varrho^* A_i \rbrack &=0. 
\end{align*}
\end{enumerate}
We call $\mathbb T$ the {\it $S_3$-symmetric tridiagonal algebra}. 
\end{definition}

\noindent Next, we compare the tridiagonal algebra $T(\beta, \gamma, \gamma^*, \varrho, \varrho^*) $ from Definition \ref{def:T} to the
$S_3$-symmetric tridiagonal algebra  $\mathbb T(\beta, \gamma, \gamma^*, \varrho, \varrho^*)$ from Definition \ref{def:symT}.
 
  \begin{lemma} \label{lem:TtoSymT}
 Referring to Definitions \ref{def:T} and \ref{def:symT},
 for distinct $r,s \in \lbrace 1,2,3\rbrace$ there exists an algebra homomorphism $T(\beta, \gamma, \gamma^*, \varrho, \varrho^*) \to \mathbb T(\beta, \gamma, \gamma^*, \varrho, \varrho^*) $
 that sends 
 \begin{align*}
 A\mapsto A_r, \qquad \qquad A^* \mapsto A^*_s.
 \end{align*}
  \end{lemma}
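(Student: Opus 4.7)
The plan is to invoke the universal property of the tridiagonal algebra $T$. Since $T(\beta,\gamma,\gamma^*,\varrho,\varrho^*)$ is presented by the two generators $A, A^*$ subject only to the tridiagonal relations \eqref{eq:TD1}, \eqref{eq:TD2}, to construct an algebra homomorphism $T \to \mathbb T$ with $A \mapsto A_r$ and $A^* \mapsto A_s^*$ it suffices to check that the elements $A_r$ and $A_s^*$ of $\mathbb T$ satisfy these two relations.

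First I would write down what \eqref{eq:TD1} and \eqref{eq:TD2} become after the substitution $A \mapsto A_r$, $A^* \mapsto A_s^*$, namely
\begin{align*}
\lbrack A_r, A_r^2 A_s^* - \beta A_r A_s^* A_r + A_s^* A_r^2 - \gamma(A_r A_s^*+A_s^*A_r) - \varrho A_s^* \rbrack &= 0, \\
\lbrack A_s^*, A_s^{*2} A_r - \beta A_s^* A_r A_s^* + A_r A_s^{*2} - \gamma^*(A_s^*A_r+A_rA_s^*) - \varrho^* A_r \rbrack &= 0.
\end{align*}
Next I would observe that these are precisely the two relations in Definition~\ref{def:symT}(iii) evaluated at $i=r$, $j=s$; the hypothesis $r \neq s$ matches the requirement that $i,j$ be distinct. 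Therefore both relations hold in $\mathbb T(\beta,\gamma,\gamma^*,\varrho,\varrho^*)$ by definition.

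Consequently the assignment $A \mapsto A_r$, $A^* \mapsto A_s^*$ extends uniquely to an algebra homomorphism $T(\beta,\gamma,\gamma^*,\varrho,\varrho^*) \to \mathbb T(\beta,\gamma,\gamma^*,\varrho,\varrho^*)$. There is no real obstacle here: the lemma is essentially a bookkeeping consequence of how the two algebras have been defined, and the only content is the matching of parameters and the observation that the $i=r, j=s$ clause of Definition~\ref{def:symT}(iii) gives exactly the tridiagonal relations required by Definition~\ref{def:T}.
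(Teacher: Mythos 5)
Your proposal is correct and takes essentially the same approach as the paper: the paper's proof is the one-line instruction ``Compare Definitions \ref{def:T} and \ref{def:symT},'' and you have simply spelled out what that comparison amounts to, namely invoking the universal property of $T$ and observing that the $i=r$, $j=s$ clause of Definition \ref{def:symT}(iii) furnishes exactly the tridiagonal relations for $A_r$ and $A_s^*$.
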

 \begin{proof} Compare Definitions \ref{def:T} and \ref{def:symT}.
 \end{proof}
\noindent In a moment, we will show that the homomorphism in Lemma \ref{lem:TtoSymT} is injective.
 
 \begin{lemma} \label{lem:comp1} Referring to Definitions \ref{def:T} and \ref{def:symT}, for distinct $r,s \in \lbrace 1,2,3\rbrace$
  there exists an algebra homomorphism $\mathbb T(\beta, \gamma, \gamma^*, \varrho, \varrho^*) \to T(\beta, \gamma, \gamma^*, \varrho, \varrho^*) $
 that sends 
  \begin{align*}
A_i \mapsto
 \begin{cases} A & {\mbox{\rm if $i=r$}}; \\
0, & {\mbox{\rm if $i\not= r$}}
\end{cases} 
\qquad \quad 
A^*_i \mapsto
 \begin{cases} A^* & {\mbox{\rm if $i=s$}}; \\
0, & {\mbox{\rm if $i\not= s$}}
\end{cases} 
\qquad \qquad i \in \lbrace 1,2,3\rbrace.
 \end{align*}
 This homomorphism is surjective.
 \end{lemma}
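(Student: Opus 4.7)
The plan is to invoke the universal property of $\mathbb T$: since $\mathbb T$ is presented by generators and relations (Definition \ref{def:symT}), it suffices to check that the proposed images in $T$ satisfy each of the defining relations (i), (ii), (iii). Once the homomorphism is shown to exist, surjectivity is immediate since $A_r \mapsto A$ and $A^*_s \mapsto A^*$, and $A, A^*$ generate $T$ by Definition \ref{def:T}.

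For relation (i), under the proposed assignment at most one of $A_i, A_j$ has nonzero image, so $[A_i, A_j]$ always maps to a commutator of the form $[A,0]$, $[0,0]$, or $[A,A]$, each of which is zero; the same reasoning handles the starred commutators. For relation (ii), I use the crucial hypothesis $r \neq s$: since $A_i$ has nonzero image only at $i=r$ while $A^*_i$ has nonzero image only at $i=s$, for every single $i \in \{1,2,3\}$ at least one of $A_i, A^*_i$ maps to $0$, so $[A_i, A^*_i] \mapsto 0$.

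For relation (iii), consider distinct $i,j$. If $(i,j) = (r,s)$, then $A_i \mapsto A$ and $A^*_j \mapsto A^*$, and the two relations of (iii) become exactly the tridiagonal relations \eqref{eq:TD1} and \eqref{eq:TD2}, which hold in $T$ by definition. For any other choice of distinct $i,j$, I claim at least one of $A_i, A^*_j$ maps to $0$: indeed, either $i \neq r$ (forcing $A_i \mapsto 0$) or $j \neq s$ (forcing $A^*_j \mapsto 0$), since if both $i=r$ and $j=s$ we are in the previous case. In either situation, every monomial inside the outer bracket of each relation in (iii) contains a factor that maps to $0$, so the entire expression maps to $0$.

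The verifications are essentially bookkeeping; there is no real obstacle, because the presentation of $\mathbb T$ was specifically designed so that adjacent generators in Fig.~1 satisfy tridiagonal relations while nonadjacent ones commute, and the proposed map sends exactly one adjacent pair $(A_r, A^*_s)$ to the generators of $T$ and kills everything else. After checking (i)--(iii), the universal property of $\mathbb T$ yields the desired homomorphism, and surjectivity follows since its image contains $A$ and $A^*$.
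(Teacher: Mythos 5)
Your proof is correct and takes essentially the same approach as the paper: the paper's entire proof is the one-line instruction to ``compare Definitions \ref{def:T} and \ref{def:symT},'' and your case analysis of relations (i)--(iii) (with the key observations that $r\neq s$ handles (ii), and that the pair $(i,j)=(r,s)$ in (iii) reproduces the tridiagonal relations while every other case degenerates to $0$) is precisely the verification that instruction leaves to the reader. The surjectivity argument is likewise the intended one.
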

 \begin{proof} To show that the homomorphism exists, compare Definitions \ref{def:T} and \ref{def:symT}. The last assertion is clear.
 \end{proof}
 
 \begin{lemma} The homomorphism in Lemma \ref{lem:TtoSymT} is injective.
 \end{lemma}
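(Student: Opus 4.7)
The plan is to prove injectivity by exhibiting a left inverse. Let $\varphi : T(\beta,\gamma,\gamma^*,\varrho,\varrho^*) \to \mathbb{T}(\beta,\gamma,\gamma^*,\varrho,\varrho^*)$ denote the homomorphism from Lemma \ref{lem:TtoSymT}, so that $\varphi(A)=A_r$ and $\varphi(A^*)=A_s^*$. Let $\psi : \mathbb{T}(\beta,\gamma,\gamma^*,\varrho,\varrho^*) \to T(\beta,\gamma,\gamma^*,\varrho,\varrho^*)$ denote the homomorphism from Lemma \ref{lem:comp1} associated with the same pair $(r,s)$, so that $\psi(A_r)=A$, $\psi(A_s^*)=A^*$, and $\psi$ sends all remaining generators of $\mathbb{T}$ to $0$.

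I would then compute the composition $\psi \circ \varphi : T \to T$ on the two generators of $T$. By construction, $(\psi\circ\varphi)(A)=\psi(A_r)=A$ and $(\psi\circ\varphi)(A^*)=\psi(A_s^*)=A^*$. Since $T$ is generated by $A$ and $A^*$, and an algebra homomorphism is determined by its values on generators, this forces $\psi \circ \varphi = \mathrm{id}_T$. Therefore $\varphi$ admits a left inverse, and so $\varphi$ is injective.

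There is no real obstacle here: the lemma is an immediate consequence of the existence of both homomorphisms in Lemmas \ref{lem:TtoSymT} and \ref{lem:comp1} with compatible values on the distinguished generators $A_r, A_s^*$. The only thing to watch is that one must pick the same pair $(r,s)$ for both $\varphi$ and $\psi$, which is exactly why Lemma \ref{lem:comp1} was stated with the freedom to choose this pair.
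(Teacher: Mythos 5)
Your proof is correct and is essentially identical to the paper's: both compose the homomorphism of Lemma \ref{lem:TtoSymT} with the one from Lemma \ref{lem:comp1} (for the same pair $(r,s)$), observe that the composition fixes $A$ and $A^*$ and hence equals the identity on $T$, and conclude injectivity from the existence of a left inverse.
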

 \begin{proof}  Consider the composition of the homomorphisms in Lemma \ref{lem:TtoSymT} and Lemma \ref{lem:comp1}:
 \begin{align*}
 T(\beta, \gamma, \gamma^*, \varrho, \varrho^*) \to \mathbb T(\beta, \gamma, \gamma^*, \varrho, \varrho^*) \to T(\beta, \gamma, \gamma^*, \varrho, \varrho^*).
 \end{align*}
 This composition sends
 \begin{align*}
 A \mapsto A_r \mapsto A, \qquad \qquad A^* \mapsto A^*_s \mapsto A^*
 \end{align*}
 and is therefore the identity map on  $T(\beta, \gamma, \gamma^*, \varrho, \varrho^*)$. The result follows.
 \end{proof}

\noindent For the rest of this section, we describe the basic structure and symmetries of the algebra $\mathbb T=\mathbb T(\beta, \gamma, \gamma^*, \varrho, \varrho^*)$ from Definition \ref{def:symT}.
\begin{definition} \label{def:poly} \rm Consider the following mutually commuting indeterminates:
\begin{align} \label{eq:six}
\widehat {A_1}, \quad \widehat{A_2}, \quad  \widehat {A_3}, \quad \widehat {A^*_1}, \quad \widehat {A^*_2}, \quad  \widehat {A^*_3}.
\end{align}
Let $\mathbb F\lbrack \widehat {A_1}, \widehat {A_2},   \widehat {A_3},  \widehat {A^*_1},  \widehat {A^*_2},  \widehat {A^*_3} \rbrack$ denote the
algebra consisting of the polynomials in the indeterminates  \eqref{eq:six} that have all coefficients in $\mathbb F$.
\end{definition}

\begin{lemma}\label{lem:abel} There exists an algebra homomorphism $ \natural: \mathbb T \to  \mathbb F\lbrack \widehat {A_1}, \widehat {A_2},   \widehat {A_3},  \widehat {A^*_1},  \widehat {A^*_2},  \widehat {A^*_3} \rbrack$ 
that sends $A_i \mapsto \widehat {A_i}$ and  $A^*_i \mapsto \widehat {A^*_i}$ for $i \in \lbrace 1,2,3\rbrace$. This homomorphism is surjective.
\end{lemma}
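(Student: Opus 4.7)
The plan is to invoke the universal property of the algebra $\mathbb T$ as presented in Definition~\ref{def:symT}. Since $\mathbb T$ is defined by generators and relations, in order to produce the desired algebra homomorphism $\natural$ it is enough to specify the images of the generators $A_i, A_i^*$ and then verify that every defining relation of $\mathbb T$ becomes a valid identity in the target polynomial algebra $\mathbb F[\widehat{A_1}, \widehat{A_2}, \widehat{A_3}, \widehat{A_1^*}, \widehat{A_2^*}, \widehat{A_3^*}]$.

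First I would assign $A_i \mapsto \widehat{A_i}$ and $A_i^* \mapsto \widehat{A_i^*}$ for $i \in \{1,2,3\}$ and check the three families of relations. Relations (i) and (ii) of Definition~\ref{def:symT} assert the vanishing of the commutators $[A_i, A_j]$, $[A_i^*, A_j^*]$, $[A_i, A_i^*]$, and these hold trivially in the polynomial algebra because its generators are declared to be mutually commuting indeterminates. Relations (iii) each have the outer form $[A_i, X] = 0$ or $[A_j^*, Y] = 0$ for some inner expression $X$ or $Y$ in the generators; that is, each relation is itself an outer commutator. Consequently, under $\natural$ each such relation is sent to a commutator in the commutative algebra $\mathbb F[\widehat{A_1}, \widehat{A_2}, \widehat{A_3}, \widehat{A_1^*}, \widehat{A_2^*}, \widehat{A_3^*}]$, which vanishes automatically regardless of the inner content. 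Hence $\natural$ is well defined.

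For surjectivity, observe that the image of $\natural$ contains each of the six indeterminates $\widehat{A_i}$ and $\widehat{A_i^*}$, and by Definition~\ref{def:poly} these generate the target polynomial algebra as an $\mathbb F$-algebra, so $\natural$ is onto.

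I do not anticipate a serious obstacle: the result is a routine verification exhibiting $\mathbb F[\widehat{A_1}, \widehat{A_2}, \widehat{A_3}, \widehat{A_1^*}, \widehat{A_2^*}, \widehat{A_3^*}]$ as a commutative quotient of $\mathbb T$. The only thing worth being careful about is the structural observation that the tridiagonal relations in (iii) already sit inside an outermost bracket, so no calculation of their inner expressions is required.
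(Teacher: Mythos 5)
Your proposal is correct and matches the paper's proof: the paper likewise observes that the commuting indeterminates satisfy all the defining relations of $\mathbb T$ (each relation being an outer commutator, hence trivial in a commutative algebra), so the homomorphism exists, and surjectivity is immediate since the generators hit the generators.
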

\begin{proof} The generators \eqref{eq:six} mutually commute, so they satisfy the defining relations for $\mathbb T$ given in Definition \ref{def:symT}.
Therefore, the algebra homomorphism exists. It is clear that the homomorphism is surjective.
\end{proof}

\begin{lemma} \label{lem:lind} The following elements are linearly independent in $\mathbb T$:
\begin{align} \label{eq:lind1}
A_1^hA_2^i A_3^j {A^*_1}^r  {A^*_2}^s  {A^*_3}^t  \qquad \qquad h,i,j,r,s,t \in \mathbb N.
\end{align}
Moreover, the following elements are linearly independent in $\mathbb T$:
\begin{align} \label{eq:lind2}
{A^*_1}^h {A^*_2}^i  {A^*_3}^j A_1^rA_2^s A_3^t  \qquad \qquad h,i,j,r,s,t \in \mathbb N.
\end{align}
\end{lemma}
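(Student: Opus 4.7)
The plan is to deduce both independence statements directly from the surjective homomorphism $\natural : \mathbb T \to \mathbb F[\widehat{A_1}, \widehat{A_2}, \widehat{A_3}, \widehat{A^*_1}, \widehat{A^*_2}, \widehat{A^*_3}]$ provided by Lemma \ref{lem:abel}. Since $\natural$ is an algebra homomorphism, linear independence of the images in the target implies linear independence of the preimages in $\mathbb T$.

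For the first family \eqref{eq:lind1}, I would observe that
\[
\natural\bigl(A_1^hA_2^i A_3^j {A^*_1}^r {A^*_2}^s {A^*_3}^t \bigr) = \widehat{A_1}^h \widehat{A_2}^i \widehat{A_3}^j \widehat{A^*_1}^r \widehat{A^*_2}^s \widehat{A^*_3}^t.
\]
As the right-hand side ranges over $(h,i,j,r,s,t)\in \mathbb N^6$, one obtains the distinct standard monomials in six commuting indeterminates, which are by construction linearly independent in the polynomial algebra. Hence the elements in \eqref{eq:lind1} are linearly independent in $\mathbb T$. For the second family \eqref{eq:lind2}, the same argument applies: the image under $\natural$ is
\[
\widehat{A^*_1}^h \widehat{A^*_2}^i \widehat{A^*_3}^j \widehat{A_1}^r \widehat{A_2}^s \widehat{A_3}^t,
\]
and since the indeterminates commute, this equals the standard monomial $\widehat{A_1}^r \widehat{A_2}^s \widehat{A_3}^t \widehat{A^*_1}^h \widehat{A^*_2}^i \widehat{A^*_3}^j$, which again gives a distinct monomial for each $6$-tuple.

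There is essentially no obstacle here: the whole argument rests on already-proven Lemma \ref{lem:abel} together with the elementary fact that the monomials in a polynomial ring are a basis. The defining relations (i)--(iii) of $\mathbb T$ play no direct role beyond ensuring that $\natural$ exists. I would present the proof as a one-paragraph derivation, treating the two cases in parallel to avoid duplication.
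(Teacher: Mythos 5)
Your proof is correct and follows exactly the paper's argument: the paper also deduces both independence claims from the fact that the $\natural$-images are distinct monomials in the polynomial ring $\mathbb F[\widehat{A_1}, \widehat{A_2}, \widehat{A_3}, \widehat{A^*_1}, \widehat{A^*_2}, \widehat{A^*_3}]$, hence linearly independent there. Your version just spells out the details that the paper leaves implicit.
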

\begin{proof} The elements \eqref{eq:lind1} are linearly independent in $\mathbb T$, because their $\natural$-images are linearly 
independent. The elements \eqref{eq:lind2} are linearly independent in $\mathbb T$, for the same reason.
\end{proof}

\begin{lemma} \label{lem:4} The following {\rm (i)}--{\rm (iv)} hold in $\mathbb T$:
\begin{enumerate}
\item[\rm (i)] $A_1, A_2, A_3$ are algebraically independent;
\item[\rm (ii)] $A^*_1, A^*_2, A^*_3$ are algebraically independent;
\item[\rm (iii)] $A_i, A^*_i$ are algebraically independent for $i \in \lbrace 1,2,3\rbrace$;
\item[\rm (iv)] the following seven elements are linearly independent:
\begin{align*}
1, \quad A_1, \quad A_2, \quad A_3, \quad A^*_1, \quad A^*_2, \quad A^*_3.
\end{align*}
\end{enumerate}
\end{lemma}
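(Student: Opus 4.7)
The plan is to reduce every assertion to the algebraic independence of the six commuting indeterminates $\widehat{A_1}, \widehat{A_2}, \widehat{A_3}, \widehat{A^*_1}, \widehat{A^*_2}, \widehat{A^*_3}$ in the polynomial algebra $\mathbb{F}\lbrack \widehat{A_1}, \widehat{A_2}, \widehat{A_3}, \widehat{A^*_1}, \widehat{A^*_2}, \widehat{A^*_3}\rbrack$, by pushing the relations forward through the abelianization homomorphism $\natural$ provided by Lemma \ref{lem:abel}.

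For part (i), suppose $p(A_1, A_2, A_3) = 0$ in $\mathbb{T}$ for some $p \in \mathbb{F}\lbrack x_1, x_2, x_3 \rbrack$. Applying $\natural$ gives $p(\widehat{A_1}, \widehat{A_2}, \widehat{A_3}) = 0$ in the polynomial algebra; since the $\widehat{A_i}$ are genuine indeterminates, $p = 0$. Parts (ii) and (iii) are handled identically, using the triples $\{\widehat{A^*_1}, \widehat{A^*_2}, \widehat{A^*_3}\}$ and $\{\widehat{A_i}, \widehat{A^*_i}\}$ respectively, each of which is a subset of the indeterminates and hence algebraically independent.

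For part (iv), suppose there is a linear dependence
\begin{equation*}
c_0 \cdot 1 + c_1 A_1 + c_2 A_2 + c_3 A_3 + c^*_1 A^*_1 + c^*_2 A^*_2 + c^*_3 A^*_3 = 0
\end{equation*}
in $\mathbb{T}$, with scalars $c_0, c_i, c^*_i \in \mathbb{F}$. Applying $\natural$ yields the same relation among $1, \widehat{A_1}, \widehat{A_2}, \widehat{A_3}, \widehat{A^*_1}, \widehat{A^*_2}, \widehat{A^*_3}$ in the polynomial algebra. These seven elements are a constant and six distinct degree-one monomials in independent indeterminates, hence are linearly independent in $\mathbb{F}\lbrack \widehat{A_1}, \widehat{A_2}, \widehat{A_3}, \widehat{A^*_1}, \widehat{A^*_2}, \widehat{A^*_3} \rbrack$. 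Therefore all coefficients vanish.

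There is no real obstacle here: the entire lemma is an immediate consequence of Lemma \ref{lem:abel}, since $\natural$ converts every potential dependence in $\mathbb{T}$ into a dependence in a polynomial ring, where the conclusions are manifest. The only conceptual content is recognizing that the required independence statements involve only generators of a single commuting type (all $A_i$'s, all $A^*_i$'s, or a single $A_i, A^*_i$ pair) or the span of individual generators, so that no information is lost when passing to the abelianization.
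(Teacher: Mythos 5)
Your proof is correct and uses the same underlying tool as the paper: the abelianization homomorphism $\natural$ from Lemma \ref{lem:abel}, which converts any putative algebraic or linear dependence in $\mathbb T$ into one in the polynomial ring, where independence of the indeterminates settles the matter. The paper phrases its (one-line) proof as ``Immediate from Lemma \ref{lem:lind}'' rather than invoking $\natural$ directly, but Lemma \ref{lem:lind} is itself proved by applying $\natural$, so your argument simply bypasses that intermediate citation; it is essentially the same route, and equally valid.
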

\begin{proof} Immediate from Lemma \ref{lem:lind}.
\end{proof}

\noindent By an {\it automorphism of $\mathbb T$}, we mean an algebra isomorphism $\mathbb T \to \mathbb T$.
The automorphism group  ${\rm Aut}(\mathbb T)$ consists of the automorphisms of $\mathbb T$; the group operation is composition.

\begin{lemma} \label{lem:aut} The following {\rm (i)--\rm (iii)} hold.
\begin{enumerate}
\item[\rm (i)]
For  $\sigma \in S_3$ there exists $\hat \sigma \in {\rm Aut}(\mathbb T)$ that sends
\begin{align*}
A_i \mapsto A_{\sigma(i)}, \qquad \quad A^*_i \mapsto A^*_{\sigma(i)} \qquad \quad i \in \lbrace 1,2,3 \rbrace.
\end{align*}
\item[\rm (ii)] 
The map $S_3 \to {\rm Aut}(\mathbb T)$, $ \sigma \mapsto {\hat \sigma}$ is a homomorphism of groups.
\item[\rm (iii)] 
The homomorphism in {\rm (ii)} is injective.
\end{enumerate}
\end{lemma}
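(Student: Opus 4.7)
The plan is to verify parts (i), (ii), (iii) in order, using the defining relations for $\mathbb T$ and the linear independence results already established in Lemma \ref{lem:4}.

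For part (i), I would show that for each $\sigma \in S_3$ the assignment $A_i \mapsto A_{\sigma(i)}$, $A^*_i \mapsto A^*_{\sigma(i)}$ extends to an algebra endomorphism of $\mathbb T$. The key observation is that each of the three families of defining relations in Definition \ref{def:symT} is invariant under any permutation of the index set $\{1,2,3\}$: relation (i) is a statement for all $i,j$, relation (ii) is a statement for all $i$, and relation (iii) is a statement for all distinct pairs $i,j$. Thus applying $\sigma$ to the indices sends each defining relation to another defining relation, so the assignment respects the relations and extends uniquely to an algebra homomorphism $\hat\sigma : \mathbb T \to \mathbb T$ by the universal property of presentations. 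To promote $\hat\sigma$ to an automorphism, I would construct $\widehat{\sigma^{-1}}$ by the same procedure and observe that the composites $\hat\sigma \circ \widehat{\sigma^{-1}}$ and $\widehat{\sigma^{-1}} \circ \hat\sigma$ act as the identity on the generators of $\mathbb T$, hence on all of $\mathbb T$.

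For part (ii), the map $\sigma \mapsto \hat\sigma$ preserves composition: for $\sigma, \tau \in S_3$, both $\widehat{\sigma\tau}$ and $\hat\sigma \circ \hat\tau$ send $A_i \mapsto A_{\sigma\tau(i)}$ and $A^*_i \mapsto A^*_{\sigma\tau(i)}$, so they agree on the generators and therefore on all of $\mathbb T$. Similarly, the identity in $S_3$ maps to the identity automorphism.

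For part (iii), suppose $\hat\sigma$ is the identity automorphism of $\mathbb T$. Then $A_{\sigma(i)} = A_i$ for every $i \in \{1,2,3\}$. By Lemma \ref{lem:4}(iv), the elements $A_1, A_2, A_3$ are linearly independent in $\mathbb T$, so the equality $A_{\sigma(i)} = A_i$ forces $\sigma(i) = i$ for each $i$, whence $\sigma$ is the identity of $S_3$.

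I do not anticipate a genuinely hard step here. The only point requiring a little care is the verification that the defining relations are genuinely $S_3$-symmetric in the index set; this is a matter of inspection, since relation (iii) treats the pair $(A_i, A_j^*)$ with $i \neq j$ in a way that only cares about the distinction between the two indices and not their names. Everything else is formal, with injectivity in (iii) being an immediate consequence of the linear independence already in hand from Lemma \ref{lem:4}.
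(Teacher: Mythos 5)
Your proposal is correct and follows the same route as the paper: the paper's proof of (i) is simply ``by the nature of the defining relations,'' (ii) is ``readily checked,'' and (iii) cites Lemma \ref{lem:4}(iv), exactly as you do. Your write-up just makes explicit the details the paper leaves to the reader (the $S_3$-invariance of the relation families, the inverse $\widehat{\sigma^{-1}}$, and the agreement on generators).
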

\begin{proof} (i) By the nature of the defining relations in Definition \ref{def:symT}. \\
\noindent (ii) This is readily checked. \\
\noindent (iii) By 
 Lemma \ref{lem:4}(iv).
 \end{proof}
 
 \begin{lemma} \label{lem:dual} Referring to Definition \ref{def:symT}, there exists an algebra isomorphism
 \begin{align*}
 \mathbb T(\beta, \gamma, \gamma^*, \varrho, \varrho^*) \to \mathbb T(\beta, \gamma^*, \gamma, \varrho^*, \varrho)
 \end{align*}
 that sends 
 \begin{align*}
 A_i \mapsto A^*_i, \qquad A^*_i \mapsto A_i \qquad \quad  i \in \lbrace 1,2,3\rbrace.
 \end{align*}
 \end{lemma}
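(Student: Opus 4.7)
The plan is to apply the universal property of $\mathbb T$ from Definition \ref{def:symT}: to produce an algebra homomorphism out of $\mathbb T(\beta,\gamma,\gamma^*,\varrho,\varrho^*)$ it suffices to exhibit, in some target algebra, six elements playing the roles of $A_1,A_2,A_3,A^*_1,A^*_2,A^*_3$ that satisfy the defining relations (i)--(iii) with parameters $\beta,\gamma,\gamma^*,\varrho,\varrho^*$.

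First I would take as target the algebra $\mathbb T(\beta,\gamma^*,\gamma,\varrho^*,\varrho)$, and propose the elements $A^*_1,A^*_2,A^*_3,A_1,A_2,A_3$ as the respective images of $A_1,A_2,A_3,A^*_1,A^*_2,A^*_3$. Relations (i) and (ii) of Definition \ref{def:symT} are manifestly invariant under the swap $A_i \leftrightarrow A^*_i$, so they hold in the target by the corresponding relations for $\mathbb T(\beta,\gamma^*,\gamma,\varrho^*,\varrho)$. The key check is relation (iii): under the swap, the first equation of (iii) with parameters $(\gamma,\varrho)$ becomes, up to rewriting, the second equation of (iii) for the target algebra, whose parameters $(\gamma^*,\varrho^*)$ are by construction our $(\gamma,\varrho)$; likewise, the second equation with parameters $(\gamma^*,\varrho^*)$ becomes the first equation for the target algebra, whose $(\gamma,\varrho)$ are our $(\gamma^*,\varrho^*)$. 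Both identities therefore hold in the target, and the universal property delivers an algebra homomorphism $\varphi:\mathbb T(\beta,\gamma,\gamma^*,\varrho,\varrho^*)\to\mathbb T(\beta,\gamma^*,\gamma,\varrho^*,\varrho)$ with the stated action on generators.

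To see $\varphi$ is an isomorphism, I would construct an inverse by repeating the same argument with the roles of the two algebras exchanged; this is legitimate because the parameter correspondence $(\beta,\gamma,\gamma^*,\varrho,\varrho^*) \leftrightarrow (\beta,\gamma^*,\gamma,\varrho^*,\varrho)$ is an involution. That produces an algebra homomorphism $\psi:\mathbb T(\beta,\gamma^*,\gamma,\varrho^*,\varrho)\to\mathbb T(\beta,\gamma,\gamma^*,\varrho,\varrho^*)$ with the swapped generator action. Since $\varphi\circ\psi$ and $\psi\circ\varphi$ both fix every generator, and algebra homomorphisms are determined by their values on generators, both compositions are the identity.

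The only real obstacle is the bookkeeping of which parameter lands where in the two halves of relation (iii); this is routine but needs to be written out carefully so that the reader sees the two equations of (iii) exchange under the involution. No further input is required — in particular, none of the structural results such as Lemma \ref{lem:lind} or Lemma \ref{lem:4} is used in this argument.
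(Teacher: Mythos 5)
Your proposal is correct and matches the paper's approach: the paper's entire proof is ``compare the defining relations for $\mathbb T(\beta, \gamma, \gamma^*, \varrho, \varrho^*)$ and $\mathbb T(\beta, \gamma^*, \gamma, \varrho^*, \varrho)$,'' and you have simply written out that comparison (the swap exchanges the two halves of relation (iii) while permuting the parameters as required) together with the standard inverse-by-symmetry argument for bijectivity.
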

 \begin{proof} Compare the defining relations for $ \mathbb T(\beta, \gamma, \gamma^*, \varrho, \varrho^*) $ and $\mathbb T(\beta, \gamma^*, \gamma, \varrho^*, \varrho)$.
 \end{proof}

\section{A module for the $S_3$-symmetric tridiagonal algebra}

In this section, we use a $Q$-polynomial distance-regular graph to construct a module for the $S_3$-symmetric tridiagonal algebra. For the basic
facts about this type of graph, we refer the reader to \cite{bbit, banIto, bcn, dkt, int}. In what follows, we will generally adopt the point of view from \cite{int}.
\medskip

\noindent From now until the end of Section 11, the following assumptions and notation are in effect. Let $\mathbb R$ (resp. $\mathbb C$) denote the field of real numbers (resp. complex numbers).
Let $\mathbb F=\mathbb C$. Let $\Gamma=(X,\mathcal R)$ denote a distance-regular graph \cite[Section~2]{int} with vertex set $X$,
adjacency relation $\mathcal R$, path-length distance function $\partial$, and  diameter $D\geq 1$. For $x \in X$ and $0 \leq i \leq D$ define the set $\Gamma_i(x)=\lbrace y \in X \vert \partial(x,y)=i\rbrace$. We abbreviate $\Gamma(x)=\Gamma_1(x)$.
Assume that $\Gamma$ is $Q$-polynomial \cite[Definition~11.1]{int}, with eigenvalue sequence $\lbrace \theta_i \rbrace_{i=0}^D$ \cite[Definition~3.6]{int} and  dual eigenvalue sequence $\lbrace \theta^*_i \rbrace_{i=0}^D$ \cite[Definition~11.8]{int}. 
By construction $\theta_i, \theta^*_i \in \mathbb R$ for $ 0 \leq i \leq D$. By \cite[Lemma~3.5]{int} the scalars $\lbrace \theta_i \rbrace_{i=0}^D$ are mutually distinct.
By \cite[Lemma~11.7]{int} the scalars  $\lbrace \theta^*_i \rbrace_{i=0}^D$ are mutually distinct.
\medskip

\noindent The following result is well known; see for example \cite[Lemma~5.4]{tSub3} or \cite[Proposition~15.9]{int}.
\begin{lemma} \label{lem:TTR} {\rm (See \cite[Lemma~5.4]{tSub3}.)} There exist scalars  $\beta, \gamma, \gamma^*, \varrho, \varrho^* \in \mathbb R$ that
satisfy the following {\rm (i)--(iii)}.
\begin{enumerate}
\item[\rm (i)]  $\beta+1$ is equal to each of
\begin{align*}
\frac{\theta_{i-2}-\theta_{i+1}}{\theta_{i-1}-\theta_i}, \qquad \qquad  \frac{\theta^*_{i-2}-\theta^*_{i+1}}{\theta^*_{i-1}-\theta^*_i}
\end{align*}
for $2 \leq i \leq D-1$.
\item[\rm (ii)] For $1 \leq i \leq D-1$, both
\begin{align*}
\gamma = \theta_{i-1}- \beta \theta_i + \theta_{i+1}, \qquad \qquad
\gamma^* = \theta^*_{i-1}- \beta \theta^*_i + \theta^*_{i+1}.
\end{align*}
\item[\rm (iii)] For $1 \leq i \leq D$, both
\begin{align*}
\varrho &= \theta^2_{i-1} - \beta \theta_{i-1} \theta_i + \theta^2_i - \gamma (\theta_{i-1}+\theta_i), \\
\varrho^* &= \theta^{*2}_{i-1} - \beta \theta^*_{i-1} \theta^*_i + \theta^{*2}_i - \gamma^* (\theta^*_{i-1}+\theta^*_i).
\end{align*}

\end{enumerate}
\end{lemma}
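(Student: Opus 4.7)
The plan is to extract $\beta$ as the common leading parameter of a pair of three-term recurrences — one for $\{\theta_i\}_{i=0}^D$ and one for $\{\theta^*_i\}_{i=0}^D$ — then read off $\gamma,\gamma^*$ as the constant right-hand sides of those recurrences, and finally obtain $\varrho,\varrho^*$ by showing the expressions in (iii) are first integrals of the recurrences in (ii).

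First I would invoke the $Q$-polynomial hypothesis to produce simultaneous three-term recurrences
\begin{align*}
\theta_{i-1} - \beta\,\theta_i + \theta_{i+1} = \gamma, \qquad \theta^*_{i-1} - \beta\,\theta^*_i + \theta^*_{i+1} = \gamma^*
\end{align*}
valid for $1 \le i \le D-1$ with a common value of $\beta$. The existence of these recurrences is the substantive content of the $Q$-polynomial property: the $\theta^*_i$ are evaluations of the family of orthogonal polynomials associated with the $Q$-polynomial ordering of primitive idempotents, while the $\theta_i$ are controlled by the intersection-number recurrences; the coincidence of the second-difference parameter $\beta$ on the two sides reflects the Askey-Wilson / $q$-Racah type parametrization of the underlying Leonard system. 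This gives (ii) directly. For (i), I would subtract the recurrence at index $i-1$ from the one at index $i$, which after rearranging yields
\begin{align*}
(\beta+1)(\theta_i - \theta_{i-1}) = \theta_{i+1} - \theta_{i-2}
\end{align*}
for $2 \le i \le D-1$. Dividing is legitimate because the $\theta_i$ are mutually distinct (as recalled just before the lemma), and the dual identity follows verbatim with stars.

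For (iii), fix $i$ with $1 \le i \le D-1$ and let $\varrho_i$ denote the expression on the right side of the defining formula for $\varrho$ at index $i$. A short algebraic manipulation that factors the differences of squares gives
\begin{align*}
\varrho_{i+1} - \varrho_i = (\theta_{i+1} - \theta_{i-1})\bigl(\theta_{i-1} + \theta_{i+1} - \beta\,\theta_i - \gamma\bigr) = 0,
\end{align*}
where the bracketed factor vanishes by (ii). Hence $\varrho_i$ is independent of $i$, and we set $\varrho$ equal to this common value. The same calculation with stars produces $\varrho^*$. All five scalars are real, being rational combinations of the real quantities $\theta_i,\theta^*_i$ with nonzero denominators.

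The main obstacle is the very first step: producing the two recurrences with a \emph{common} $\beta$. While each sequence individually would satisfy some three-term recurrence by standard orthogonal-polynomial considerations, the coincidence of the second-difference parameter is the essential extra information delivered by the $Q$-polynomial hypothesis (as opposed to mere $P$-polynomiality). To handle this step I would either unpack the definition of $Q$-polynomial via the dual Bose–Mesner recurrence and compare the resulting identities to the intersection-number recurrences on the $\theta_i$ side, or appeal to the Bannai–Ito classification of $P$- and $Q$-polynomial schemes \cite[Chapter~3]{banIto} and the Leonard-pair perspective, in which the shared $\beta$ is the base parameter of a common $q$-Racah-type parametrization.
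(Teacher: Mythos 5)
The paper gives no proof of this lemma; it simply cites \cite[Lemma~5.4]{tSub3} (and the prose preceding the statement also points to \cite[Proposition~15.9]{int}), treating the result as well known. So there is no internal argument to compare yours against. Judged on its own merits, your algebra for reducing (i) and (iii) to (ii) is correct: the subtraction of consecutive recurrences giving
\begin{align*}
(\beta+1)(\theta_i-\theta_{i-1}) = \theta_{i+1}-\theta_{i-2}
\end{align*}
rearranges to the form in (i) since the $\theta_i$ are mutually distinct, and the telescoping identity
\begin{align*}
\varrho_{i+1}-\varrho_i = (\theta_{i+1}-\theta_{i-1})\bigl(\theta_{i-1}+\theta_{i+1}-\beta\theta_i-\gamma\bigr)
\end{align*}
does vanish once (ii) is available.

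However, as you yourself flag, the entire content of the lemma is concentrated in the first step: the existence of a \emph{single} $\beta$ making both recurrences in (ii) hold simultaneously. That is precisely the assertion in (i), so once (ii) is granted the remaining work is bookkeeping. Your proposal never actually establishes (ii); it gestures at ``unpack the definition of $Q$-polynomial via the dual Bose--Mesner recurrence'' or ``appeal to the Bannai--Ito classification,'' but neither route is carried out. In effect you have reduced the lemma to its own hardest clause and then cited the same class of sources the paper does. To close the gap you would need to exhibit, from the $Q$-polynomial axioms themselves, why the second-difference ratio $\frac{\theta_{i-2}-\theta_{i+1}}{\theta_{i-1}-\theta_i}$ is constant in $i$ and why the analogous ratio for $\theta^*_i$ yields the \emph{same} constant --- this coincidence across the primal and dual eigenvalue sequences is exactly the nontrivial coupling that the citation \cite[Lemma~5.4]{tSub3} supplies.
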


\begin{definition}\rm Let $V$ denote a vector space over $\mathbb C$ with basis $X$. We call $V$
the {\it standard module} associated with $\Gamma$.
\end{definition}
\begin{definition} \rm We define the vector space $V^{\otimes 3} = V \otimes V \otimes V$ and the set
\begin{align*}
X^{\otimes 3} = \lbrace x \otimes y \otimes z\vert x,y,z \in X \rbrace.
\end{align*}
Note that $X^{\otimes 3}$ is a basis for $V^{\otimes 3}$.
\end{definition}
\noindent We now state our first main result.

\begin{theorem}\label{thm:main} For the scalars $\beta, \gamma, \gamma^*, \varrho, \varrho^*$ from Lemma \ref{lem:TTR}, the vector space $V^{\otimes 3}$ becomes a $\mathbb T(\beta, \gamma, \gamma^*, \varrho, \varrho^*)$-module
on which the generators $\lbrace A_i \rbrace_{i=1}^3$,  $\lbrace A^*_i \rbrace_{i=1}^3$ act as follows.
For $x,y,z \in X$,
\begin{align*}
A_1 (x\otimes y \otimes z) &= \sum_{\xi \in \Gamma(x)} \xi \otimes y \otimes z, \\
A_2 (x\otimes y \otimes z) &= \sum_{\xi \in \Gamma(y)} x \otimes \xi \otimes z, \\
A_3 (x\otimes y \otimes z) &= \sum_{\xi \in \Gamma(z)} x \otimes y \otimes \xi, \\
A^*_1 (x \otimes y \otimes z) &= x \otimes y \otimes z \,\theta^*_{\partial(y,z)},  \\
A^*_2 (x \otimes y \otimes z) &= x \otimes y \otimes z \, \theta^*_{\partial(z,x)},  \\
A^*_3 (x \otimes y \otimes z) &= x \otimes y \otimes z \,\theta^*_{\partial(x,y)}.
\end{align*}
\end{theorem}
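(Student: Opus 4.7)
The plan is to verify each of the defining relations (i)--(iii) of Definition \ref{def:symT} directly from the formulas prescribed by the theorem. Relations (i) and (ii) are transparent. For $i \neq j$, the operators $A_i$ and $A_j$ alter only the $i$-th and $j$-th tensor factors respectively (each acting as the adjacency operator of $\Gamma$ on its own factor and as the identity elsewhere), so $[A_i, A_j] = 0$; and each $A^*_i$ is diagonal in the basis $X^{\otimes 3}$, so any two of the $A^*_i$ commute. For (ii), $A_i$ acts only on the $i$-th factor while $A^*_i$ is multiplication by a scalar depending only on the two factors other than $i$, so $A_i$ and $A^*_i$ commute.

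The substance lies in relation (iii). Fix distinct $i, j \in \{1, 2, 3\}$, and let $k$ denote the remaining index. For each pair $(y, z) \in X \times X$, let $W_{y,z}$ denote the subspace of $V^{\otimes 3}$ spanned by those basis vectors in $X^{\otimes 3}$ whose $j$-th slot is $y$ and whose $k$-th slot is $z$. Then $V^{\otimes 3} = \bigoplus_{y,z \in X} W_{y,z}$, and the projection to the $i$-th slot gives an isomorphism $W_{y,z} \cong V$. Under this identification, $A_i$ restricted to $W_{y,z}$ corresponds to the adjacency operator $A$ on $V$, and $A^*_j$ restricted to $W_{y,z}$ corresponds to the dual adjacency operator $A^*$ on $V$ with respect to the basepoint $z \in X$. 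The latter is justified by inspection of the formulas for $A^*_1, A^*_2, A^*_3$: the subscript of $\theta^*$ in the definition of $A^*_j$ records the distance between the $i$-th slot and the $k$-th slot, and since the $k$-th slot is fixed to $z$ on $W_{y,z}$, the operator acts on the $i$-th factor as multiplication by $\theta^*_{\partial(z, \cdot\,)}$.

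By the classical result \cite[Lemma~5.4]{tSub3} (also reviewed in the introduction), for any basepoint in $X$ the adjacency operator $A$ and the corresponding dual adjacency operator $A^*$ on $V$ satisfy the tridiagonal relations of $T(\beta, \gamma, \gamma^*, \varrho, \varrho^*)$, with scalars as in Lemma \ref{lem:TTR}. Consequently the tridiagonal relations in (iii) hold for $(A_i, A^*_j)$ on each summand $W_{y,z}$, and hence on all of $V^{\otimes 3}$. The only real obstacle is bookkeeping: one must check, for each of the six ordered pairs of distinct indices $(i, j)$, that the slot appearing in the subscript of $\theta^*$ in $A^*_j$ other than the $i$-th slot is indeed the $k$-th slot, so that fixing its entry produces a genuine dual adjacency operator on the $i$-th factor. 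This is verified case by case directly from the definitions of $A^*_1, A^*_2, A^*_3$.
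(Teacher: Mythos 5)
Your proof is correct, but it takes a genuinely different route from the paper's. The paper proves the tridiagonal relations in (iii) directly on $V^{\otimes 3}$: it writes the relevant commutator $C$ as $\sum_{i,j} E_i^{*(s)} C E_j^{*(s)}$ (respectively $\sum_{i,j} E_i^{(r)} C E_j^{(r)}$) and kills each term separately --- for $\vert i-j\vert>1$ by the sandwich-vanishing lemmas ($E_i^{*(s)}A^{(r)}E_j^{*(s)}=0$ via the triangle inequality, and $E_i^{(r)}A^{*(s)}E_j^{(r)}=0$ via the Krein-type condition $E_i(E_1V\circ E_jV)=0$ coming from the $Q$-polynomial property), and for $\vert i-j\vert\leq 1$ by the polynomial identities of Lemma \ref{lem:TTR}. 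This is in effect a re-derivation of \cite[Lemma~5.4]{tSub3} in the tensor setting, self-contained modulo the Krein condition. You instead slice $V^{\otimes 3}$ into the $\vert X\vert^2$ subspaces $W_{y,z}$, each invariant under $A_i$ and $A^*_j$ and isomorphic to $V$ in a way that carries $(A_i,A^*_j)$ to the classical pair $(A, A^*(z))$, and then invoke \cite[Lemma~5.4]{tSub3} as a black box; both tridiagonal relations follow at once on each slice, hence on the direct sum. Your index bookkeeping checks out in all six cases ($A^*_j$ depends precisely on slots $i$ and $k$, so freezing slot $k$ at $z$ yields the dual adjacency map with basepoint $z$ on the $i$-th factor), and the cited result is exactly the one the paper itself quotes in the introduction with the same scalars, so there is no mismatch or circularity. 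What your argument buys is brevity and a conceptual explanation of why the relations hold (each pair $(A_i,A^*_j)$ literally is a direct sum of copies of the standard $T$-module structures on $V$); what the paper's argument buys is self-containment and the intermediate Lemmas \ref{prop:EsAEs} and \ref{prop:EAsE}, which make explicit where the $Q$-polynomial hypothesis enters.
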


\noindent The proof of Theorem \ref{thm:main} will be completed in Section 8.

\begin{remark}\rm The six actions shown in Theorem \ref{thm:main} are discussed in \cite[p.~76]{scaffold}. The actions of $A_1$, $A_2$, $A_3$ are called node actions, and 
the actions of $A^*_1$, $A^*_2$, $A^*_3$ are called edge actions.
\end{remark}

\section{The maps  $A^{(1)}, A^{(2)}, A^{(3)}$}
\noindent  We continue to discuss the $Q$-polynomial distance-regular graph  $\Gamma=(X,\mathcal R)$ from Section 5.  
Recall the standard module $V$. 
\begin{definition} \label{def:adj} \rm Define $A \in {\rm End}(V)$ such that 
\begin{align*}
A x = \sum_{\xi \in \Gamma(x)} \xi, \qquad \qquad x \in X.         
\end{align*}
We call $A$ the {\it adjacency map} for $\Gamma$.
\end{definition}

\noindent Recall the vector space $V^{\otimes 3} = V \otimes V \otimes V$.  

\begin{definition}\rm \label{def:mapsAAA}
We define $A^{(1)}, A^{(2)}, A^{(3)} \in {\rm End}(V^{\otimes 3})$ as follows. For $x,y,z \in X$,
\begin{align*}
A^{(1)} (x\otimes y \otimes z) &= \sum_{\xi \in \Gamma(x)} \xi \otimes y \otimes z, \\
A^{(2)} (x\otimes y \otimes z) &= \sum_{\xi \in \Gamma(y)} x \otimes \xi \otimes z, \\
A^{(3)} (x\otimes y \otimes z) &= \sum_{\xi \in \Gamma(z)} x \otimes y \otimes \xi.
\end{align*}
\end{definition}

\begin{lemma} \label{lem:Atensor} For $u,v,w \in V$ we have
\begin{align*}
&A^{(1)} (u\otimes v \otimes w) = Au\otimes v \otimes w, \qquad \quad 
A^{(2)} (u\otimes v \otimes w) = u\otimes Av \otimes w, \\
&A^{(3)} (u\otimes v \otimes w) = u\otimes v \otimes Aw.
\end{align*}
\end{lemma}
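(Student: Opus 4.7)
The plan is to verify the three identities first on the basis $X^{\otimes 3}$ and then extend to arbitrary $u,v,w \in V$ by multilinearity. For the first identity, I would start with a basis element $x \otimes y \otimes z$ where $x,y,z \in X$. By Definition \ref{def:adj}, we have $Ax = \sum_{\xi \in \Gamma(x)} \xi$, and by the bilinearity of the tensor product this gives $Ax \otimes y \otimes z = \sum_{\xi \in \Gamma(x)} \xi \otimes y \otimes z$. Comparing with the defining formula for $A^{(1)}$ in Definition \ref{def:mapsAAA}, the right-hand side is exactly $A^{(1)}(x \otimes y \otimes z)$. The arguments for $A^{(2)}$ and $A^{(3)}$ are identical up to notation.

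To promote these equalities from basis vectors to arbitrary $u, v, w \in V$, I would expand $u = \sum_{x \in X} u_x x$, $v = \sum_{y \in X} v_y y$, $w = \sum_{z \in X} w_z z$ with scalars $u_x, v_y, w_z \in \mathbb{C}$. Then
\begin{align*}
A^{(1)}(u \otimes v \otimes w) &= A^{(1)}\Bigl(\sum_{x,y,z \in X} u_x v_y w_z \, x \otimes y \otimes z\Bigr) \\
&= \sum_{x,y,z \in X} u_x v_y w_z \, A^{(1)}(x \otimes y \otimes z) \\
&= \sum_{x,y,z \in X} u_x v_y w_z \, Ax \otimes y \otimes z \\
&= \Bigl(\sum_{x \in X} u_x Ax\Bigr) \otimes \Bigl(\sum_{y \in X} v_y y\Bigr) \otimes \Bigl(\sum_{z \in X} w_z z\Bigr) \\
&= Au \otimes v \otimes w,
\end{align*}
using linearity of $A^{(1)}$ and of $A$, together with trilinearity of the tensor product. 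The identities for $A^{(2)}$ and $A^{(3)}$ follow in exactly the same way by permuting the role of the three tensor slots.

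Since each step invokes only linearity and the already-established basis-case identity, there is no real obstacle here; the lemma is essentially a restatement of Definition \ref{def:mapsAAA} in tensor-product notation, and its role is to record that $A^{(i)}$ is the operator $A$ acting on the $i$-th tensor factor, so that subsequent arguments (such as the verification of Theorem \ref{thm:main}) may freely interchange the two descriptions.
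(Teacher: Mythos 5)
Your proof is correct and follows exactly the route the paper intends: the paper dismisses this lemma as a ``routine consequence'' of Definitions \ref{def:adj} and \ref{def:mapsAAA}, and your argument is the standard verification on the basis $X^{\otimes 3}$ followed by extension by linearity. Nothing is missing.
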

\begin{proof} Routine consequence of Definitions \ref{def:adj}, \ref{def:mapsAAA}.
\end{proof}

\noindent By \cite[Section~2 and Lemma~3.5]{int} the map $A$ is diagonalizable on $V$, with eigenvalues $\lbrace \theta_i \rbrace_{i=0}^D$. For $0 \leq i \leq D$ let $E_i$ denote the primitive idempotent of $A$ for $\theta_i$.
Note that $E_iV$ is the $\theta_i$-eigenspace of $A$.

\begin{lemma} \label{lem:Aeig} Each of the maps $A^{(1)}, A^{(2)}, A^{(3)} $ is diagonalizable, with eigenvalues $\lbrace \theta_i \rbrace_{i=0}^D$. For $0 \leq i \leq D$
their $\theta_i$-eigenspaces are
\begin{align*}
E_iV \otimes V \otimes V, \qquad \quad V \otimes E_iV \otimes V, \qquad \qquad V \otimes V \otimes E_iV,
\end{align*}
respectively.
\end{lemma}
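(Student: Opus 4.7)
The plan is to reduce everything to the diagonalizability of $A$ on $V$ via the tensor-product action from Lemma \ref{lem:Atensor}. I will only write out the argument for $A^{(1)}$; the cases of $A^{(2)}$ and $A^{(3)}$ are identical after permuting the three tensor factors.

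First I would invoke the eigenspace decomposition $V = \bigoplus_{i=0}^D E_iV$ of $A$, which is cited just before the statement of the lemma. Tensoring this decomposition on the left against $V \otimes V$ gives
\begin{equation*}
V^{\otimes 3} = \bigoplus_{i=0}^D E_iV \otimes V \otimes V.
\end{equation*}
By Lemma \ref{lem:Atensor}, for $u \in E_iV$ and $v,w \in V$ we have
\begin{equation*}
A^{(1)}(u \otimes v \otimes w) = Au \otimes v \otimes w = \theta_i\, u \otimes v \otimes w,
\end{equation*}
so the subspace $E_iV \otimes V \otimes V$ is contained in the $\theta_i$-eigenspace of $A^{(1)}$. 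Since $V^{\otimes 3}$ is the direct sum of these subspaces, $A^{(1)}$ is diagonalizable with eigenvalues contained in $\{\theta_i\}_{i=0}^D$, and the containment of each $E_iV \otimes V \otimes V$ in the $\theta_i$-eigenspace is forced to be equality (because distinct eigenspaces intersect trivially and the subspaces already sum to the whole space).

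To finish, I would note that each $E_iV$ is nonzero (as $\theta_i$ is an eigenvalue of $A$), and $V$ is nonzero, so each $E_iV \otimes V \otimes V$ is nonzero; this confirms that every $\theta_i$ actually occurs as an eigenvalue of $A^{(1)}$. Repeating the same argument with the tensor factor in the second, respectively third, position handles $A^{(2)}$ and $A^{(3)}$. I do not foresee any genuine obstacle here: the only nontrivial ingredient is Lemma \ref{lem:Atensor}, and everything else is a direct-sum bookkeeping argument.
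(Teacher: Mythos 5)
Your proposal is correct and follows essentially the same route as the paper: both decompose $V^{\otimes 3}$ as the direct sum of the subspaces $E_iV \otimes V \otimes V$ and use Lemma \ref{lem:Atensor} to see that each such subspace consists of $\theta_i$-eigenvectors of $A^{(1)}$. Your added remarks (that the containments are forced to be equalities and that each eigenspace is nonzero) are fine but are the standard bookkeeping the paper leaves implicit.
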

\begin{proof} By $S_3$-symmetry, it suffices to prove the result for $A^{(1)}$. By Lemma  \ref{lem:Atensor} and the construction, for $0 \leq i \leq D$ each vector in
$E_iV \otimes V \otimes V$ is an eigenvector for $A^{(1)}$ with eigenvalue $\theta_i$. The sum $V=\sum_{i=0}^D E_iV$ is direct, so the following sum is direct:
\begin{align*}
V^{\otimes 3} = \sum_{i=0}^D E_iV \otimes V \otimes V.
\end{align*}
By the above  comments, we get the result for $A^{(1)}$.
\end{proof}

\noindent Next we describe the primitive idempotents for  $A^{(1)}, A^{(2)}, A^{(3)}$.

\begin{definition}\rm \label{def:PI3} For $0 \leq i \leq D$ let  $E^{(1)}_i$ (resp. $E^{(2)}_i$) (resp. $E^{(3)}_i$)  denote the primitive idempotent of $A^{(1)}$ (resp. $A^{(2)}$) (resp. $A^{(3)}$) for $\theta_i$.
\end{definition}

\begin{lemma} \label{lem:E3action}  For $0 \leq i \leq D$ the maps $E_i^{(1)}, E_i^{(2)}, E_i^{(3)}$ act as follows. For $u,v,w \in V$,
\begin{align*}
&E_i^{(1)} (u\otimes v \otimes w) = E_iu\otimes v \otimes w, \qquad \quad 
E_i^{(2)} (u\otimes v \otimes w) = u\otimes E_iv \otimes w, \\
&E_i^{(3)} (u\otimes v \otimes w) = u\otimes v \otimes E_iw.
\end{align*}
\end{lemma}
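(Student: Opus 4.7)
The plan is to reduce the claim to the observation that the primitive idempotents of a diagonalizable operator are polynomials in that operator, combined with the single-tensor-factor description of $A^{(1)},A^{(2)},A^{(3)}$ given by Lemma \ref{lem:Atensor}. By $S_3$-symmetry it suffices to verify the formula for $E_i^{(1)}$; the arguments for $E_i^{(2)}$ and $E_i^{(3)}$ are identical after relabeling tensor factors.

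Recall from the preliminaries that
\[
E_i \;=\; \prod_{\substack{0\le j\le D\\ j\ne i}}\frac{A-\theta_j I}{\theta_i-\theta_j}.
\]
By Lemma \ref{lem:Aeig}, the map $A^{(1)}$ is diagonalizable with the same set of eigenvalues $\{\theta_j\}_{j=0}^D$ as $A$, so the corresponding formula from the preliminaries gives
\[
E_i^{(1)} \;=\; \prod_{\substack{0\le j\le D\\ j\ne i}}\frac{A^{(1)}-\theta_j I}{\theta_i-\theta_j}.
\]

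From Lemma \ref{lem:Atensor}, a routine induction on $n$ shows that $(A^{(1)})^n(u\otimes v\otimes w) = A^n u\otimes v\otimes w$ for every $n\in \mathbb N$, and consequently for any polynomial $p \in \mathbb C[x]$ we have $p(A^{(1)})(u\otimes v\otimes w) = p(A)u\otimes v\otimes w$. Applying this identity to the Lagrange polynomial for $E_i$ displayed above yields $E_i^{(1)}(u\otimes v\otimes w) = E_i u\otimes v\otimes w$, which is the desired formula.

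I do not anticipate a real obstacle: the argument is essentially the tautology that a polynomial in an operator which acts only on the first tensor factor again acts only on the first tensor factor. The only subtlety worth flagging is the invocation of Lemma \ref{lem:Aeig} to guarantee that $A^{(1)}$ and $A$ share eigenvalues, which is what allows the same Lagrange polynomial to represent both primitive idempotents.
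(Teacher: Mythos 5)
Your proof is correct, but it takes a different (and perfectly valid) route from the paper. The paper's proof directly checks that the candidate map $u\otimes v\otimes w\mapsto E_iu\otimes v\otimes w$ satisfies the defining property of the primitive idempotent: by Lemma \ref{lem:Aeig} this map acts as the identity on the $\theta_i$-eigenspace $E_iV\otimes V\otimes V$ of $A^{(1)}$ and as zero on the other eigenspaces $E_jV\otimes V\otimes V$, so it must equal $E_i^{(1)}$. You instead invoke the Lagrange-interpolation formula for $E_i^{(1)}$ as a polynomial in $A^{(1)}$, and then push the polynomial through the first tensor factor using Lemma \ref{lem:Atensor}. Both arguments are short and elementary; the paper's is slightly leaner in that it avoids the explicit polynomial and only needs the eigenspace description, whereas yours makes the ``$E_i^{(1)}=E_i\otimes I\otimes I$'' identity transparent by a direct computation and illustrates the general fact that any polynomial in $A^{(1)}$ acts factorwise. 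Your remark that Lemma \ref{lem:Aeig} is what guarantees $A^{(1)}$ and $A$ share the eigenvalues $\{\theta_j\}_{j=0}^D$ (so the same Lagrange polynomial applies) is exactly the right subtlety to flag.
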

\begin{proof} By $S_3$-symmetry, it suffices to prove the result for $E_i^{(1)}$. Note that the map
\begin{align*}
V^{\otimes 3} &\to V^{\otimes 3} \\
 u \otimes v \otimes w &\mapsto E_i u\otimes v \otimes w
\end{align*}
acts as the identity on $E_iV \otimes V\otimes V$, and as 0 on  $E_jV \otimes V\otimes V$ for $0 \leq j \leq D$, $j\not=i$. By these comments and Lemma  \ref{lem:Aeig}, we get the result for $E_i^{(1)}$.
\end{proof}

\begin{lemma} \label{lem:E3prop} For $r \in \lbrace 1,2,3\rbrace $ we have
\begin{align*}
&A^{(r)} = \sum_{i=0}^D \theta_i E_i^{(r)}, \qquad \qquad \quad I = \sum_{i=0}^D E_i^{(r)}, \\
& E_i^{(r)} E_j^{(r)} = \delta_{i,j} E_i^{(r)} \qquad \qquad \quad (0 \leq i,j\leq D), \\
& A^{(r)} E_i^{(r)} = \theta_i E_i^{(r)} = E_i^{(r)} A^{(r)} \qquad  (0 \leq i \leq D), \\
& E_i^{(r)} = \prod_{\stackrel{0 \leq j \leq D}{j \neq i}} \frac{A^{(r)} - \theta_j I}{\theta_i - \theta_j} \qquad \qquad (0 \leq i \leq D).
\end{align*} 
\end{lemma}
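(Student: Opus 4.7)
The plan is to recognize that these five identities are nothing more than the standard properties of primitive idempotents of a diagonalizable operator, which were recorded in the Preliminaries (facts (i)--(v) and the product formula at the end of Section~2). By Lemma~\ref{lem:Aeig}, the map $A^{(r)} \in \mathrm{End}(V^{\otimes 3})$ is diagonalizable with (mutually distinct) eigenvalues $\{\theta_i\}_{i=0}^D$, and by Definition~\ref{def:PI3}, $E_i^{(r)}$ is precisely the primitive idempotent of $A^{(r)}$ associated with $\theta_i$. Thus all five identities follow at once by applying the Preliminaries to the operator $A^{(r)}$ on the ambient space $V^{\otimes 3}$, with no further computation required.

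If one prefers a more hands-on derivation, by the $S_3$-symmetry encoded in Definitions~\ref{def:mapsAAA} and~\ref{def:PI3} it suffices to treat $r=1$. The idea is to reduce each identity on $V^{\otimes 3}$ to the corresponding identity for $A$ and $\{E_i\}_{i=0}^D$ acting on $V$, which holds because the $E_i$ are primitive idempotents of $A$ on $V$. Concretely, Lemma~\ref{lem:Atensor} gives $A^{(1)}(u\otimes v\otimes w)=Au\otimes v\otimes w$ and Lemma~\ref{lem:E3action} gives $E_i^{(1)}(u\otimes v\otimes w)=E_iu\otimes v\otimes w$ on simple tensors. So, for example, the spectral decomposition $A=\sum_{i=0}^D\theta_iE_i$ on $V$ implies $A^{(1)}(u\otimes v\otimes w)=\sum_{i=0}^D\theta_i E_iu\otimes v\otimes w=\bigl(\sum_{i=0}^D\theta_i E_i^{(1)}\bigr)(u\otimes v\otimes w)$, and by linearity this gives the first identity. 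The identities $I=\sum_iE_i^{(1)}$, $E_i^{(1)}E_j^{(1)}=\delta_{i,j}E_i^{(1)}$, and $A^{(1)}E_i^{(1)}=\theta_iE_i^{(1)}=E_i^{(1)}A^{(1)}$ follow in exactly the same way, by applying the analogous relations for $A$ and $E_i$ to the first tensor factor.

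For the last identity, one applies the product formula for $E_i$ on $V$ factorwise. Using Lemma~\ref{lem:E3action} repeatedly, one checks that evaluating the polynomial $\prod_{j\neq i}(A^{(1)}-\theta_jI)/(\theta_i-\theta_j)$ on a simple tensor $u\otimes v\otimes w$ produces $\bigl(\prod_{j\neq i}(A-\theta_jI)/(\theta_i-\theta_j)\bigr)u\otimes v\otimes w=E_iu\otimes v\otimes w=E_i^{(1)}(u\otimes v\otimes w)$, and the identity extends to all of $V^{\otimes 3}$ by linearity. Since every step is a direct transcription of a known fact about $A$ on $V$ to $A^{(r)}$ on $V^{\otimes 3}$ via Lemmas~\ref{lem:Atensor} and~\ref{lem:E3action}, there is no real obstacle; the only thing to keep track of is which of the three tensor slots is being acted on, and this is handled uniformly by the $S_3$-symmetry of Definitions~\ref{def:mapsAAA} and~\ref{def:PI3}.
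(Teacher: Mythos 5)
Your first paragraph is exactly the paper's proof: $A^{(r)}$ is diagonalizable by Lemma~\ref{lem:Aeig}, $E_i^{(r)}$ is its primitive idempotent by Definition~\ref{def:PI3}, so all five identities are the general facts about primitive idempotents recalled in Section~2. The additional factorwise derivation is correct but unnecessary; the paper stops at the one-line citation.
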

\begin{proof} By Definition \ref{def:PI3} and the discussion about primitive idempotents in Section 2.
\end{proof}

\noindent Next, we describe how  $A^{(1)}, A^{(2)}, A^{(3)}$ are related.

\begin{lemma} \label{lem:ABC} The maps  $A^{(1)}, A^{(2)}, A^{(3)}$ mutually commute. Their common eigenspaces are
\begin{align*}
E_hV \otimes E_i V\otimes E_j V \qquad \qquad (0 \leq h,i,j\leq D).
\end{align*}
\end{lemma}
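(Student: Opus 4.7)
The plan is to prove the two assertions in sequence using the tensor-factor description from Lemma \ref{lem:Atensor} and the decomposition from Lemma \ref{lem:Aeig}.

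First I would establish pairwise commutativity of $A^{(1)}, A^{(2)}, A^{(3)}$. By Lemma \ref{lem:Atensor}, for any simple tensor $u \otimes v \otimes w$ and any distinct $r,s \in \{1,2,3\}$, applying $A^{(r)}A^{(s)}$ and $A^{(s)}A^{(r)}$ both yield a tensor in which the $A$-operator is applied independently in two different slots. For example, $A^{(1)}A^{(2)}(u \otimes v \otimes w) = Au \otimes Av \otimes w = A^{(2)}A^{(1)}(u \otimes v \otimes w)$, and the other pairs are identical by $S_3$-symmetry. Since simple tensors span $V^{\otimes 3}$ and each $A^{(r)}$ is $\mathbb{C}$-linear, commutativity extends to all of $V^{\otimes 3}$.

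Next I would identify the common eigenspaces. Since $A$ is diagonalizable on $V$ with $V = \bigoplus_{i=0}^{D} E_iV$, distributing the tensor product yields the direct sum decomposition
\begin{equation*}
V^{\otimes 3} = \bigoplus_{h,i,j=0}^{D} E_hV \otimes E_iV \otimes E_jV.
\end{equation*}
By Lemma \ref{lem:Atensor}, every vector in $E_hV \otimes E_iV \otimes E_jV$ is a common eigenvector for $A^{(1)}, A^{(2)}, A^{(3)}$ with eigenvalues $\theta_h, \theta_i, \theta_j$ respectively. Because the scalars $\{\theta_i\}_{i=0}^D$ are mutually distinct, the triples $(\theta_h, \theta_i, \theta_j)$ are mutually distinct as $(h,i,j)$ ranges over $\{0,\ldots,D\}^3$, so these subspaces are pairwise disjoint common eigenspaces.

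Finally I would argue completeness: since the sum of these subspaces equals $V^{\otimes 3}$ and each is contained in a joint eigenspace, no larger joint eigenspace can exist. Equivalently, one can observe via Lemma \ref{lem:E3action} that $E_h^{(1)} E_i^{(2)} E_j^{(3)}$ projects $V^{\otimes 3}$ onto $E_hV \otimes E_iV \otimes E_jV$, and $\sum_{h,i,j} E_h^{(1)} E_i^{(2)} E_j^{(3)} = I$ by Lemma \ref{lem:E3prop}. I do not expect any real obstacle here; the whole argument is an immediate consequence of the tensor-factor formulas and the fact that $A$ is diagonalizable on $V$.
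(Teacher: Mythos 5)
Your proof is correct and follows essentially the same route as the paper: both rest on the direct sum decomposition $V^{\otimes 3}=\bigoplus_{h,i,j} E_hV\otimes E_iV\otimes E_jV$ together with the eigenvalue identification from Lemmas \ref{lem:Atensor} and \ref{lem:Aeig}. The only difference is that you verify commutativity by an explicit computation on simple tensors, whereas the paper lets it follow from the existence of the common eigenspace decomposition; both are fine.
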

\begin{proof} The following sum is direct:
\begin{align*}
 V^{\otimes 3} = \sum_{h=0}^D \sum_{i=0}^D \sum_{j=0}^D E_h V \otimes E_i V \otimes E_j V.
 \end{align*}
 By Lemma \ref{lem:Aeig}, for $0 \leq h,i,j\leq D$ each element in $E_h V \otimes E_i V \otimes E_j V$ is an eigenvector  for $A^{(1)}$ (resp. $A^{(2)}$) (resp. $ A^{(3)}$) with 
 eigenvalue $\theta_h$ (resp. $\theta_i$) (resp. $\theta_j$). The result follows.
\end{proof}

\noindent We end this section with a comment about  the dual eigenvalue sequence $\lbrace \theta^*_i \rbrace_{i=0}^D$.
\begin{lemma} {\rm (See \cite[Section~19]{int}.)} \label{lem:dpi}
For $z \in X$,
\begin{align*}
E_1 z = \vert X \vert^{-1} \sum_{\xi \in X} \xi \theta^*_{\partial(\xi, z)}.
\end{align*}
\end{lemma}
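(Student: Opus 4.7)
The plan is to unpack $E_1 z$ in the basis $X$ and then invoke the standard definition of the dual eigenvalue sequence. Viewing $E_1$ as a matrix in the ordered basis $X$, we have
$$E_1 z = \sum_{\xi \in X} (E_1)_{\xi, z}\, \xi.$$
Since $E_1$ lies in the Bose--Mesner algebra of $\Gamma$, its $(\xi, z)$-entry depends only on $\partial(\xi, z)$. So everything reduces to identifying the value of $(E_1)_{\xi, z}$ when $\partial(\xi, z) = i$.

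Next I would invoke \cite[Definition~11.8]{int}, which was already cited above to introduce $\{\theta^*_i\}_{i=0}^D$. In the conventions of \cite{int}, the dual eigenvalue sequence is fixed precisely so that the dual adjacency matrix $A^*(z) = |X|\,\mathrm{Diag}(E_1 z)$ has diagonal entry $\theta^*_{\partial(\xi, z)}$ at position $\xi$. Equivalently,
$$(E_1)_{\xi, z} = |X|^{-1}\,\theta^*_{\partial(\xi, z)} \qquad (\xi, z \in X).$$
Substituting this into the expansion of $E_1 z$ above immediately yields the stated formula.

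I do not anticipate any real obstacle: the content of the lemma is just the translation of the defining property of the $\theta^*_i$ from a statement about diagonal entries of $A^*(z)$ into a vector identity for $E_1 z$, which will be convenient in subsequent sections. The only point to watch is the normalization convention for $\theta^*_i$ used in \cite[Section~19]{int}; once that is matched against \cite[Definition~11.8]{int}, the proof is essentially immediate from the definitions.
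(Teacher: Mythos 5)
Your proof is correct and is exactly the standard argument that the cited reference \cite[Section~19]{int} uses: the identity is nothing more than the defining relation $(E_1)_{\xi,z} = |X|^{-1}\theta^*_{\partial(\xi,z)}$ for the dual eigenvalue sequence, written out as a vector identity. The paper itself gives no proof (it only cites \cite{int}), so there is nothing further to compare; your argument is the right one.
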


\section{The maps  $A^{*(1)}, A^{*(2)}, A^{*(3)}$}
\noindent  We continue to discuss the $Q$-polynomial distance-regular graph  $\Gamma=(X,\mathcal R)$ from Section 5.  
Recall the standard module $V$ and the vector space $V^{\otimes 3} = V \otimes V \otimes V$.  

\begin{definition}\rm \label{def:mapsAAAs}
We define $A^{*(1)}, A^{*(2)}, A^{*(3)} \in {\rm End}(V^{\otimes 3})$ as follows. For $x,y,z \in X$,
\begin{align*}
A^{*(1)} (x\otimes y \otimes z) &=  x \otimes y \otimes z\,\theta^*_{\partial(y,z)}, \\
A^{*(2)} (x\otimes y \otimes z) &=  x \otimes y\otimes z\, \theta^*_{\partial(z,x)}  , \\
A^{*(3)} (x\otimes y \otimes z) &=  x \otimes y \otimes z\, \theta^*_{\partial(x,y)}.
\end{align*}
\end{definition}

\begin{lemma} \label{lem:AAAsEig} Each of the maps $A^{*(1)}, A^{*(2)}, A^{*(3)} $ is diagonalizable, with eigenvalues $\lbrace \theta^*_i \rbrace_{i=0}^D$. For $0 \leq i \leq D$
their $\theta^*_i$-eigenspaces are
\begin{align*}
&{\rm Span} \bigl \lbrace x \otimes y \otimes z \vert x,y,z \in X, \partial(y,z)=i\bigr\rbrace, \\
&{\rm Span} \bigl \lbrace x \otimes y \otimes z \vert x,y,z \in X, \partial(z,x)=i\bigr\rbrace, \\
&{\rm Span} \bigl \lbrace x \otimes y \otimes z \vert x,y,z \in X, \partial(x,y)=i\bigr\rbrace.
\end{align*}
respectively.
\end{lemma}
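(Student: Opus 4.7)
The plan is to observe that each of the three maps acts diagonally on the basis $X^{\otimes 3}$ of $V^{\otimes 3}$, and then to read off the eigenvalues and eigenspaces directly from that observation. By the $S_3$-symmetry of the definitions, it suffices to handle $A^{*(1)}$; the arguments for $A^{*(2)}$ and $A^{*(3)}$ are identical after relabelling the tensor factors, so I would handle them at the end with one sentence.

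First I would note that, by Definition \ref{def:mapsAAAs}, for each basis vector $x\otimes y\otimes z \in X^{\otimes 3}$ the map $A^{*(1)}$ scales it by $\theta^*_{\partial(y,z)}$. Hence $X^{\otimes 3}$ is a basis of eigenvectors for $A^{*(1)}$, which immediately shows that $A^{*(1)}$ is diagonalizable. Since $\Gamma$ has diameter $D$, the value $\partial(y,z)$ attains every integer in $\{0,1,\ldots,D\}$ as $y,z$ range over $X$, so the set of eigenvalues of $A^{*(1)}$ is exactly $\{\theta^*_i\}_{i=0}^D$.

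Next I would identify the $\theta^*_i$-eigenspace. For each $i$, every basis vector $x\otimes y\otimes z$ with $\partial(y,z)=i$ lies in the $\theta^*_i$-eigenspace of $A^{*(1)}$, so the stated span is contained in it. Conversely, because the scalars $\{\theta^*_i\}_{i=0}^D$ are mutually distinct by \cite[Lemma~11.7]{int}, the basis vectors with $\partial(y,z)\neq i$ lie in eigenspaces for other eigenvalues. Since the partition of $X^{\otimes 3}$ by the value of $\partial(y,z)$ yields a direct sum decomposition of $V^{\otimes 3}$ into subspaces on which $A^{*(1)}$ acts as distinct scalars, the $\theta^*_i$-eigenspace equals exactly the span in the statement.

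There is no serious obstacle here; the only substantive input is the distinctness of the $\theta^*_i$ (from \cite[Lemma~11.7]{int}), without which the described span might properly contain the eigenspace or vice versa. The results for $A^{*(2)}$ and $A^{*(3)}$ then follow by replacing the pair $(y,z)$ by $(z,x)$ and $(x,y)$, respectively, in the argument above.
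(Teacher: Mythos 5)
Your proof is correct and takes essentially the same approach as the paper: reduce to $A^{*(1)}$ by $S_3$-symmetry, observe that $X^{\otimes 3}$ is a basis of eigenvectors with $A^{*(1)}(x\otimes y\otimes z)=\theta^*_{\partial(y,z)}\,x\otimes y\otimes z$, and read off the eigenvalues and eigenspaces. You make explicit the reliance on the distinctness of the $\theta^*_i$, which the paper leaves implicit, but this is the same argument.
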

\begin{proof} We invoke Definition \ref{def:mapsAAAs}. By $S_3$-symmetry, it suffices to prove the result for  $A^{*(1)}$. The set  $X^{\otimes 3}$  forms a basis for $V^{\otimes 3}$ consisting
of eigenvectors for  $A^{*(1)}$.
For $x,y,z \in X$ the eigenvector $x \otimes y \otimes z$ has eigenvalue $\theta^*_i$, where $i = \partial(y,z)$. The possible values of $\partial(y,z)$
are $\lbrace 0,1,\ldots, D\rbrace$ so the eigenvalues of  $A^{*(1)}$ are $\lbrace \theta^*_i \rbrace_{i=0}^D$. By these comments, we get the result for  $A^{*(1)}$.

\end{proof}

\noindent Next we describe the primitive idempotents for  $A^{*(1)}, A^{*(2)}, A^{*(3)}$.

\begin{definition}\rm \label{def:PI3s} For $0 \leq i \leq D$ let  $E^{*(1)}_i$ (resp. $E^{*(2)}_i$) (resp. $E^{*(3)}_i$)  denote the primitive idempotent of $A^{*(1)}$ (resp. $A^{*(2)}$) (resp. $A^{*(3)}$) for $\theta^*_i$.
\end{definition}

\begin{lemma} \label{lem:E3saction}  For $0 \leq i \leq D$ the maps $E_i^{*(1)}, E_i^{*(2)}, E_i^{*(3)}$ act as follows. For $x,y,z \in X$,
\begin{align*}
&E_i^{*(1)} (x\otimes y \otimes z) = \begin{cases} x\otimes y \otimes z,& {\mbox{\rm if $\partial(y,z)=i$}}; \\
0, & {\mbox{\rm if $\partial(y,z) \not= i$}}
\end{cases} \\
&E_i^{*(2)} (x\otimes y \otimes z) = \begin{cases} x\otimes y \otimes z,& {\mbox{\rm if $\partial(z,x)=i$}}; \\
0, & {\mbox{\rm if $\partial(z,x) \not= i$}}
\end{cases} \\
&E_i^{*(3)} (x\otimes y \otimes z) = \begin{cases} x\otimes y \otimes z,& {\mbox{\rm if $\partial(x,y)=i$}}; \\
0, & {\mbox{\rm if $\partial(x,y) \not= i$}}.
\end{cases} 
\end{align*}
\end{lemma}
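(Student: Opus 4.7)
The approach is essentially to read off the action from the definition of a primitive idempotent, using the eigenspace description already established in Lemma \ref{lem:AAAsEig}. By the $S_3$-symmetry apparent in Definitions \ref{def:mapsAAAs} and \ref{def:PI3s}, it suffices to treat the case of $E_i^{*(1)}$; the arguments for $E_i^{*(2)}$ and $E_i^{*(3)}$ are identical after cyclically permuting the roles of the three tensor factors.

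First I would recall from the proof of Lemma \ref{lem:AAAsEig} the key observation that each basis vector $x\otimes y\otimes z \in X^{\otimes 3}$ is itself an eigenvector of $A^{*(1)}$ with eigenvalue $\theta^*_{\partial(y,z)}$. Combined with the fact, noted after Lemma \ref{lem:TTR}, that the scalars $\{\theta^*_i\}_{i=0}^D$ are mutually distinct, this places $x\otimes y\otimes z$ in exactly one of the eigenspaces of $A^{*(1)}$, namely the one indexed by $\partial(y,z)$.

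Next I would invoke the general properties of primitive idempotents from the preliminaries in Section 2: by Definition \ref{def:PI3s}, $E_i^{*(1)}$ acts as the identity on the $\theta^*_i$-eigenspace of $A^{*(1)}$ and as zero on every other eigenspace. Applying this to the basis vector $x\otimes y\otimes z$ yields exactly
\[
E_i^{*(1)}(x\otimes y\otimes z) = \begin{cases} x\otimes y\otimes z & \text{if } \partial(y,z)=i,\\ 0 & \text{if } \partial(y,z)\neq i,\end{cases}
\]
which is the required formula. The analogous formulas for $E_i^{*(2)}$ and $E_i^{*(3)}$ follow by repeating the argument with $\partial(y,z)$ replaced by $\partial(z,x)$ and $\partial(x,y)$ respectively, as dictated by Definition \ref{def:mapsAAAs}.

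There is no real obstacle here; the statement is a direct bookkeeping consequence of Lemma \ref{lem:AAAsEig} and the defining property of a primitive idempotent. The only point worth flagging in the write-up is the quiet use of the distinctness of the $\theta^*_i$, which is what guarantees that a basis vector lies in a \emph{unique} eigenspace and hence is either fixed or annihilated by $E_i^{*(1)}$ with no intermediate behavior.
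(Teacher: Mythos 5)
Your argument is essentially identical to the paper's: both reduce to $E_i^{*(1)}$ by $S_3$-symmetry, invoke Lemma \ref{lem:AAAsEig} to identify the eigenspaces, and then apply the defining property of the primitive idempotent to the basis vectors of $X^{\otimes 3}$. Your explicit note about the distinctness of the $\theta^*_i$ is a fair point to flag, though the paper treats it as already absorbed into Lemma \ref{lem:AAAsEig}.
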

\begin{proof} By $S_3$-symmetry, it suffices to prove the result for  $E_i^{*(1)}$. By Lemma  \ref{lem:AAAsEig}
and Definition  \ref{def:PI3s},
$E_i^{*(1)}$ acts as the identity on 
${\rm Span} \bigl \lbrace x \otimes y \otimes z \vert x,y,z \in X, \partial(y,z)=i\bigr\rbrace$ and as zero on 
${\rm Span} \bigl \lbrace x \otimes y \otimes z \vert x,y,z \in X, \partial(y,z)\not=i\bigr\rbrace$. By these comments we get the result for
 $E_i^{*(1)}$.
\end{proof}

\begin{lemma} \label{lem:E3sprop} For $r \in \lbrace 1,2,3\rbrace $ we have
\begin{align*}
&A^{*(r)} = \sum_{i=0}^D \theta^*_i E_i^{*(r)}, \qquad \qquad \qquad I = \sum_{i=0}^D E_i^{*(r)}, \\
& E_i^{*(r)} E_j^{*(r)} = \delta_{i,j} E_i^{*(r)} \qquad \qquad \qquad (0 \leq i,j\leq D), \\
& A^{*(r)} E_i^{*(r)} = \theta^*_i E_i^{*(r)} = E_i^{*(r)} A^{*(r)} \qquad  (0 \leq i \leq D), \\
& E_i^{*(r)} = \prod_{\stackrel{0 \leq j \leq D}{j \neq i}} \frac{A^{*(r)} - \theta^*_j I}{\theta^*_i - \theta^*_j} \qquad \qquad (0 \leq i \leq D).
\end{align*} 
\end{lemma}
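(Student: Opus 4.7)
The plan is to observe that this lemma is a direct application of the general facts about primitive idempotents of a diagonalizable operator, recalled in the Preliminaries (Section 2), applied to the specific maps $A^{*(r)}$. By Lemma \ref{lem:AAAsEig}, each $A^{*(r)}$ is diagonalizable with eigenvalues $\{\theta^*_i\}_{i=0}^D$, and by the discussion after Lemma 11.7 of \cite{int} recalled in Section 5 these scalars are mutually distinct. By Definition \ref{def:PI3s}, $E_i^{*(r)}$ is by definition the primitive idempotent of $A^{*(r)}$ associated with the $\theta^*_i$-eigenspace. So all five assertions are instances of the general statements (i)--(v) listed in the Preliminaries together with the product formula below them, applied verbatim with $d=D$ and with eigenvalues $\theta^*_i$ in place of $\theta_i$.

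Concretely, for fixed $r \in \{1,2,3\}$, I would fix the ordering of the eigenspaces of $A^{*(r)}$ given by Lemma \ref{lem:AAAsEig}; write the resulting direct sum decomposition $V^{\otimes 3} = \bigoplus_{i=0}^D U_i$ where $U_i$ is the $\theta^*_i$-eigenspace of $A^{*(r)}$; and note that $E_i^{*(r)}$ is the projector onto $U_i$ along the remaining summands. From this, $\sum_{i=0}^D E_i^{*(r)} = I$ and $E_i^{*(r)} E_j^{*(r)} = \delta_{i,j} E_i^{*(r)}$ follow immediately from the direct sum decomposition; $A^{*(r)} E_i^{*(r)} = \theta^*_i E_i^{*(r)} = E_i^{*(r)} A^{*(r)}$ follows because $A^{*(r)}$ restricted to $U_i$ is multiplication by $\theta^*_i$; and $A^{*(r)} = \sum_{i=0}^D \theta^*_i E_i^{*(r)}$ follows by summing the previous identity over $i$ and using $\sum_i E_i^{*(r)} = I$.

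The polynomial formula
\begin{align*}
E_i^{*(r)} = \prod_{\stackrel{0 \leq j \leq D}{j \neq i}} \frac{A^{*(r)} - \theta^*_j I}{\theta^*_i - \theta^*_j}
\end{align*}
then follows from a standard check: evaluate the right-hand side on each eigenspace $U_k$ using that $A^{*(r)} - \theta^*_j I$ acts on $U_k$ as multiplication by $\theta^*_k - \theta^*_j$. For $k \neq i$ the factor with $j = k$ is zero, so the product vanishes on $U_k$; for $k = i$ each factor contributes $(\theta^*_i - \theta^*_j)/(\theta^*_i - \theta^*_j) = 1$, so the product is the identity on $U_i$. Hence the right-hand side agrees with $E_i^{*(r)}$ on every summand of the direct sum, and the two maps are equal.

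There is no real obstacle: all content is packaged in Lemma \ref{lem:AAAsEig} and Definition \ref{def:PI3s}, together with elementary linear algebra. Accordingly, I would present the proof simply as: immediate from Definition \ref{def:PI3s}, Lemma \ref{lem:AAAsEig}, and the general properties of primitive idempotents reviewed in Section 2, exactly as was done for Lemma \ref{lem:E3prop}.
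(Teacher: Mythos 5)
Your proposal is correct and matches the paper's proof, which simply cites Definition \ref{def:PI3s} and the general discussion of primitive idempotents in Section 2; you have just spelled out the elementary linear algebra that the paper leaves implicit. No gaps.
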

\begin{proof}  By Definition \ref{def:PI3s} and the discussion about primitive idempotents in Section 2.
\end{proof}

\noindent Next, we describe how  $A^{*(1)}, A^{*(2)}, A^{*(3)}$ are related. Recall from \cite[Section~2]{int} the intersection numbers $p^h_{i,j}$ $(0 \leq h,i,j\leq D)$.

\begin{lemma} \label{lem:ABCs} The maps  $A^{*(1)}, A^{*(2)}, A^{*(3)}$ mutually commute. Their common eigenspaces are
\begin{align*}
&{\rm Span} \bigl\lbrace x \otimes y \otimes z \vert x,y,z\in X,  \partial(y,z)=h, \partial(z,x)=i, \partial(x,y)=j \bigr\rbrace,\\
& 0 \leq h,i,j\leq D, \qquad \qquad  p^h_{i,j} \not=0.
\end{align*}
\end{lemma}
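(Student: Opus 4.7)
My plan is to exploit the fact that each of $A^{*(1)}, A^{*(2)}, A^{*(3)}$ is diagonal with respect to the common basis $X^{\otimes 3}$ of $V^{\otimes 3}$. From Definition \ref{def:mapsAAAs}, for each $x\otimes y\otimes z \in X^{\otimes 3}$ the three maps $A^{*(1)}, A^{*(2)}, A^{*(3)}$ act on this basis vector as the scalars $\theta^*_{\partial(y,z)}, \theta^*_{\partial(z,x)}, \theta^*_{\partial(x,y)}$ respectively. Since the three maps admit a common basis of eigenvectors, they mutually commute, giving the first assertion of the lemma.

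For the eigenspace description, I would fix a triple $(h,i,j)$ with $0\leq h,i,j\leq D$ and identify the common eigenspace of $(A^{*(1)}, A^{*(2)}, A^{*(3)})$ for the eigenvalue triple $(\theta^*_h, \theta^*_i, \theta^*_j)$. Since the scalars $\{\theta^*_n\}_{n=0}^D$ are mutually distinct (noted in Section 5), a basis vector $x\otimes y\otimes z$ belongs to this common eigenspace if and only if $\partial(y,z)=h$, $\partial(z,x)=i$, $\partial(x,y)=j$. Consequently this common eigenspace is exactly the span displayed in the lemma.

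It remains to justify the constraint $p^h_{i,j}\neq 0$, which merely records when the displayed span is nonzero, i.e., when at least one triple $(x,y,z)\in X^{\otimes 3}$ realizes the prescribed distances. By the defining property of the intersection numbers of a distance-regular graph, fixing any $y,z\in X$ with $\partial(y,z)=h$, the number of $x\in X$ satisfying $\partial(y,x)=j$ and $\partial(z,x)=i$ equals $p^h_{j,i}$. Invoking the standard symmetry $p^h_{i,j}=p^h_{j,i}$ (a consequence of $\partial$ being symmetric), this count is nonzero if and only if $p^h_{i,j}\neq 0$. Combining the three steps yields the claimed parametrization of the common eigenspaces.

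The only (minor) obstacle is aligning the cyclic pattern of distances appearing in Definition \ref{def:mapsAAAs} with the conventional indexing of $p^h_{i,j}$ and invoking the symmetry; the rest is an immediate reading of Lemma \ref{lem:AAAsEig} together with the mutual distinctness of $\{\theta^*_n\}_{n=0}^D$.
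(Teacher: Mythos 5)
Your proof is correct and follows essentially the same route as the paper: observe that $X^{\otimes 3}$ is a common eigenbasis for the three diagonal maps (hence they commute), read off the eigenvalue triple from Definition \ref{def:mapsAAAs}, use the distinctness of $\{\theta^*_n\}_{n=0}^D$ to match eigenspaces with distance triples, and interpret $p^h_{i,j}\neq 0$ as the nonemptiness condition. The only difference is that you spell out the intersection-number count and the symmetry $p^h_{i,j}=p^h_{j,i}$ explicitly, where the paper simply asserts the equivalence; both are fine.
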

\begin{proof} The set $X^{\otimes 3}$  forms a basis for $V^{\otimes 3}$ consisting of
common eigenvectors for  $A^{*(1)}$, $A^{*(2)}$, $A^{*(3)}$. For $0 \leq h,i,j\leq D$ the following are equivalent: (i) $p^h_{i,j} \not=0$;
(ii) there exists $x,y,z \in X$ such that $h= \partial(y,z)$, $i= \partial(z,x)$, $j= \partial(x,y)$. Suppose the equivalent conditions (i), (ii) hold,
and let $x,y,z$ satisfy (ii). 
Then the eigenvector $x \otimes y \otimes z$ has eigenvalue
$\theta^*_h$ (resp. $\theta^*_i$) (resp. $\theta^*_j$) for  $A^{*(1)}$ (resp. $A^{*(2)}$) (resp. $ A^{*(3)}$). By these comments we get the result.
\end{proof}

\section{ How    $A^{(1)}, A^{(2)}, A^{(3)}$ and   $A^{*(1)}, A^{*(2)}, A^{*(3)}$ are related}

\noindent  We continue to discuss the $Q$-polynomial distance-regular graph  $\Gamma=(X,\mathcal R)$ from Section 5.
In this section, we describe how the maps   $A^{(1)}, A^{(2)}, A^{(3)}$ from Definition  \ref{def:mapsAAA} are related to the maps
   $A^{*(1)}, A^{*(2)}, A^{*(3)}$ from Definition  \ref{def:mapsAAAs}.

\begin{proposition} \label{lem:AASB} We have
\begin{align*}
\lbrack A^{(1)}, A^{*(1)} \rbrack=0, \qquad \qquad
\lbrack A^{(2)}, A^{*(2)} \rbrack=0, \qquad \qquad
\lbrack A^{(3)}, A^{*(3)} \rbrack=0.
\end{align*}
\end{proposition}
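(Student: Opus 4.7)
The plan is to reduce to the single case $[A^{(1)}, A^{*(1)}] = 0$ by the evident $S_3$-symmetry between the three cases (this is the same kind of reduction invoked in the proofs of Lemmas \ref{lem:Aeig}, \ref{lem:AAAsEig}, \ref{lem:E3action}, \ref{lem:E3saction}), and then to verify this one commutation relation by acting on the basis $X^{\otimes 3}$ of $V^{\otimes 3}$.

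The key structural observation, which makes the verification essentially automatic, is that $A^{(1)}$ and $A^{*(1)}$ act on disjoint tensor factors. More precisely, by Definition \ref{def:mapsAAA} and Lemma \ref{lem:Atensor}, $A^{(1)}$ acts as $A \otimes I \otimes I$, and therefore affects only the first tensor factor. On the other hand, the scalar $\theta^*_{\partial(y,z)}$ appearing in Definition \ref{def:mapsAAAs} depends only on $y$ and $z$, so $A^{*(1)}$ leaves the first factor untouched and acts only on the second and third factors. Since operators acting on disjoint tensor factors commute, the result follows.

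Concretely, I would fix $x,y,z \in X$ and compute both compositions on $x \otimes y \otimes z$: on the one hand,
\begin{align*}
A^{(1)} A^{*(1)} (x \otimes y \otimes z)
&= \theta^*_{\partial(y,z)} A^{(1)}(x\otimes y \otimes z)
= \theta^*_{\partial(y,z)} \sum_{\xi \in \Gamma(x)} \xi \otimes y \otimes z,
\end{align*}
while on the other hand,
\begin{align*}
A^{*(1)} A^{(1)} (x \otimes y \otimes z)
= A^{*(1)} \sum_{\xi \in \Gamma(x)} \xi \otimes y \otimes z
= \sum_{\xi \in \Gamma(x)} \theta^*_{\partial(y,z)} \,\xi \otimes y \otimes z,
\end{align*}
since the distance $\partial(y,z)$ is unchanged when the first coordinate is replaced by $\xi$. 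These two expressions are equal, so $[A^{(1)}, A^{*(1)}]$ vanishes on every element of $X^{\otimes 3}$, hence on all of $V^{\otimes 3}$.

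There is no real obstacle here: the content of the lemma is exactly that $A^{(r)}$ and $A^{*(r)}$ act on complementary tensor factors, and once this is noticed the verification is a one-line basis calculation together with an $S_3$-symmetry appeal. The only thing to be careful about is the indexing convention in Definitions \ref{def:mapsAAA} and \ref{def:mapsAAAs}, which pairs $A^{(1)}$ (acting on the first factor) with $A^{*(1)}$ (depending on the second and third factors via $\partial(y,z)$); this is precisely what makes the commutation work.
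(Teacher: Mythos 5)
Your proof is correct and follows exactly the same route as the paper: reduce to $[A^{(1)},A^{*(1)}]=0$ by $S_3$-symmetry, then verify on the basis $X^{\otimes 3}$ that both compositions yield $\theta^*_{\partial(y,z)}\sum_{\xi\in\Gamma(x)}\xi\otimes y\otimes z$. Your framing of the computation in terms of operators acting on disjoint tensor factors is a nice conceptual gloss, but the underlying calculation is the same one the paper uses.
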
 
\begin{proof} By $S_3$-symmetry, it suffices to prove that $\lbrack A^{(1)}, A^{*(1)} \rbrack=0$. The maps $A^{(1)}$, $A^{*(1)}$ commute  because for $x,y,z \in X$, both
\begin{align*}
&A^{(1)} A^{*(1)} (x \otimes y \otimes z) = A^{(1)}  (x \otimes y \otimes z)  \theta^*_{\partial(y,z)} =
 \theta^*_{\partial(y,z)} \sum_{\xi \in \Gamma(x)} \xi \otimes y \otimes z, \\
& A^{*(1)} A^{(1)} (x \otimes y \otimes z) = A^{*(1)} \sum_{\xi \in \Gamma(x)} \xi \otimes y \otimes z =  \theta^*_{\partial(y,z)} \sum_{\xi \in \Gamma(x)} \xi \otimes y \otimes z.
\end{align*}
\end{proof}

\begin{lemma} \label{prop:EsAEs} For $0 \leq i,j\leq D$ such that $\vert i-j \vert >1$, we have
\begin{align*}
&E_i^{*(2)} A^{(1)} E_j^{*(2)} =0, \qquad \qquad E_i^{*(3)} A^{(1)} E_j^{*(3)} =0, \\
&E_i^{*(3)} A^{(2)} E_j^{*(3)} =0, \qquad \qquad E_i^{*(1)} A^{(2)} E_j^{*(1)} =0, \\
&E_i^{*(1)} A^{(3)} E_j^{*(1)} =0, \qquad \qquad E_i^{*(2)} A^{(3)} E_j^{*(2)} =0.
\end{align*}
\end{lemma}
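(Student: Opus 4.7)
The plan is to reduce to a single case by $S_3$-symmetry and then carry out a direct computation on the distinguished basis $X^{\otimes 3}$ of $V^{\otimes 3}$, using the triangle inequality for the path-length distance $\partial$ as the key combinatorial input.

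First I would argue that by the evident $S_3$-symmetry of the six identities (cycling the three tensor factors), it suffices to verify one of them, for instance $E_i^{*(2)} A^{(1)} E_j^{*(2)} = 0$. To establish this, I would apply the composition to an arbitrary basis vector $x \otimes y \otimes z \in X^{\otimes 3}$ and trace through the action. By Lemma \ref{lem:E3saction}, $E_j^{*(2)}(x \otimes y \otimes z)$ equals $x \otimes y \otimes z$ when $\partial(z,x) = j$ and is $0$ otherwise; so we may assume $\partial(z,x) = j$. Then Definition \ref{def:mapsAAA} gives
\begin{equation*}
A^{(1)} E_j^{*(2)}(x \otimes y \otimes z) = \sum_{\xi \in \Gamma(x)} \xi \otimes y \otimes z.
\end{equation*}

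Next, I would apply $E_i^{*(2)}$ term by term: again by Lemma \ref{lem:E3saction}, the summand $\xi \otimes y \otimes z$ survives only if $\partial(z,\xi) = i$. Here the crucial observation is that $\xi \in \Gamma(x)$ forces $\partial(x,\xi) = 1$, so by the triangle inequality
\begin{equation*}
|\partial(z,\xi) - \partial(z,x)| \leq 1, \qquad \text{i.e.,} \qquad \partial(z,\xi) \in \{j-1,\,j,\,j+1\}.
\end{equation*}
Under the hypothesis $|i-j| > 1$, the value $i$ is not among these three options, so every term vanishes. Hence $E_i^{*(2)} A^{(1)} E_j^{*(2)}$ annihilates every basis vector and is therefore zero. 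The other five identities follow by the same argument after permuting the tensor factors.

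There is no real obstacle here; the only point worth flagging is making the $S_3$-reduction transparent. The pairing in each of the six identities is always of the form ``$A^{(r)}$ sandwiched by $E^{*(s)}$ with $s \neq r$,'' and the slot indexed by $s$ is precisely the one whose distance parameter is perturbed by at most $1$ when $A^{(r)}$ changes an entry in slot $r$ by a single step. Once this pattern is identified, the triangle inequality gives the conclusion uniformly.
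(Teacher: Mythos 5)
Your argument is correct and is essentially identical to the paper's own proof: both reduce to the single identity $E_i^{*(2)} A^{(1)} E_j^{*(2)}=0$ by $S_3$-symmetry, evaluate on a basis vector $x\otimes y\otimes z$ with $\partial(z,x)=j$, and kill the surviving terms via the triangle inequality applied to $\xi\in\Gamma(x)$. No gaps.
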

\begin{proof} By $S_3$-symmetry, it suffices to prove that $E_i^{*(2)} A^{(1)} E_j^{*(2)} =0$. To prove this equation, we show that for $x,y,z\in X$,
\begin{align} \label{eq:EAExyz}
E_i^{*(2)} A^{(1)} E_j^{*(2)}(x \otimes y \otimes z) = 0.
\end{align}
First assume that $\partial(x,z) \not=j$. Then \eqref{eq:EAExyz} holds because  $E_j^{*(2)}(x \otimes y \otimes z) = 0$. Next assume that $\partial(x,z)=j$.
Then
\begin{align*}
&E_i^{*(2)} A^{(1)} E_j^{*(2)}(x \otimes y \otimes z) = E_i^{*(2)} A^{(1)} (x \otimes y \otimes z) \\
& \qquad = E_i^{*(2)} \sum_{\xi \in \Gamma(x)}  \xi \otimes y \otimes z 
=  \sum_{\xi \in \Gamma(x) \cap \Gamma_i(z)}  \xi \otimes y \otimes z =0,
\end{align*}
\noindent  with the last equality holding because the set $ \Gamma(x) \cap \Gamma_i(z)$ is empty by the triangle inequality and $\vert i-j\vert >1$. We have shown \eqref{eq:EAExyz}.
By these comments, we get $E_i^{*(2)} A^{(1)} E_j^{*(2)} =0$.
\end{proof}

\begin{proposition} \label{prop:AAsA}
For distinct $r,s \in \lbrace 1,2,3 \rbrace$ we have
\begin{align*}
\bigl \lbrack A^{*(s)}, A^{*(s)2} A^{(r)}  - \beta A^{*(s)}  A^{(r)} A^{*(s)} + A^{(r)} A^{*(s)2} - \gamma^* (A^{*(s)}  A^{(r)} +A^{(r)}A^{*(s)}) - \varrho^* A^{(r)} \bigr \rbrack &=0.
\end{align*}
\end{proposition}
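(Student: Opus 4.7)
The plan is to exploit Lemma \ref{prop:EsAEs}, which says that $A^{(r)}$ is tridiagonal with respect to the eigenspace decomposition of $A^{*(s)}$, together with the recursive identity defining $\varrho^*$ in Lemma \ref{lem:TTR}(iii). For brevity abbreviate $A=A^{(r)}$, $A^*=A^{*(s)}$, and $E_i=E_i^{*(s)}$ for $0\leq i\leq D$, and denote by $M$ the expression inside the outer commutator. The strategy is to compute $M$ explicitly in terms of the ``matrix entries'' $E_i A E_j$, and then observe that $[A^*,M]$ vanishes entry by entry.

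First I would insert the resolution of the identity $I=\sum_i E_i$ (from Lemma \ref{lem:E3sprop}) on both sides of $A$ and write $A=\sum_{i,j} E_i A E_j$. By Lemma \ref{prop:EsAEs}, the summands with $|i-j|>1$ vanish, leaving only the tridiagonal terms with $|i-j|\leq 1$. Since $A^* E_i=\theta^*_i E_i=E_i A^*$, each of the five summands making up $M$ acts on $E_i A E_j$ by a scalar; a direct computation gives
\[
M=\sum_{|i-j|\leq 1} f(\theta^*_i,\theta^*_j)\, E_i A E_j,
\]
where $f(u,v)=u^2-\beta uv+v^2-\gamma^*(u+v)-\varrho^*$. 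Note that $f$ is symmetric in its two arguments.

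Next I would compute
\[
[A^*,M]=\sum_{|i-j|\leq 1}(\theta^*_i-\theta^*_j)\,f(\theta^*_i,\theta^*_j)\, E_i A E_j.
\]
The diagonal terms $i=j$ vanish because of the factor $\theta^*_i-\theta^*_j$. For each off-diagonal term with $|i-j|=1$, one of $\{i,j\}$ equals $k-1$ and the other equals $k$ for some $1\leq k\leq D$; by the symmetry of $f$, the surviving task is to show $f(\theta^*_{k-1},\theta^*_k)=0$, which is precisely the identity of Lemma \ref{lem:TTR}(iii) defining $\varrho^*$. Hence every term in the sum vanishes, and $[A^*,M]=0$ as desired.

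There is no serious obstacle: the argument is a mechanical translation of the standard one-variable tridiagonal calculation \cite[Lemma~5.4]{tSub3} to the present tensor setting. The only point that needs care is that $r\neq s$, but Lemma \ref{prop:EsAEs} is stated for exactly these six cross pairs, so it supplies the tridiagonality needed to make the same eigenvalue identity from Lemma \ref{lem:TTR}(iii) close the argument.
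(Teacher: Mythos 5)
Your proposal is correct and follows essentially the same route as the paper: the paper sandwiches the entire commutator $C$ between $E_i^{*(s)}$ and $E_j^{*(s)}$ and shows each entry $E_i^{*(s)}CE_j^{*(s)}=E_i^{*(s)}A^{(r)}E_j^{*(s)}(\theta^*_i-\theta^*_j)P^*(\theta^*_i,\theta^*_j)$ vanishes, using exactly the same two inputs you use (Lemma \ref{prop:EsAEs} for $|i-j|>1$ and Lemma \ref{lem:TTR}(iii) for $|i-j|=1$). Expanding $A^{(r)}=\sum_{i,j}E_i^{*(s)}A^{(r)}E_j^{*(s)}$ first, as you do, is only a cosmetic reordering of the same computation.
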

\begin{proof} Let $C$ denote the expression on the left. We show that $C=0$. We have
\begin{align*}
C = I C I = \Biggl( \sum_{i=0}^D E_i^{*(s)} \Biggr) C  \Biggl( \sum_{j=0}^D E_j^{*(s)} \Biggr) = \sum_{i=0}^D \sum_{j=0}^D E_i^{*(s)}C E_j^{*(s)}.
\end{align*}
For $0\leq i,j\leq D$ we show that $E_i^{*(s)}C E_j^{*(s)}=0$. Using $E_i^{*(s)} A^{*(s)} = \theta^*_i E_i^{*(s)}$ and  $A^{*(s)}E_j^{*(s)}  = \theta^*_j E_j^{*(s)}$, we obtain
\begin{align} \label{eq:terms}
E_i^{*(s)}C E_j^{*(s)}=E_i^{*(s)} A^{(r)}  E_j^{*(s)} (\theta^*_i - \theta^*_j)P^*(\theta^*_i, \theta^*_j),
\end{align}
where the polynomial $P^*(\lambda, \mu)$ is defined by
\begin{align*}
P^*(\lambda, \mu) = \lambda^2 - \beta \lambda \mu + \mu^2 - \gamma^*(\lambda + \mu) - \varrho^*.
\end{align*}
We examine the factors on the right in \eqref{eq:terms}.
If  $\vert i - j \vert >1$ then $E_i^{*(s)} A^{(r)}  E_j^{*(s)}=0$ by Lemma  \ref{prop:EsAEs}.
If $\vert i-j\vert = 1$ then $P^*(\theta^*_i, \theta^*_j)=0$ by Lemma  \ref{lem:TTR}(iii).
If $i=j$ then of course  $\theta^*_i - \theta^*_j=0$.
By these comments, the expression on the right in  \eqref{eq:terms} is equal to zero.
We have shown that $E_i^{*(s)}C E_j^{*(s)}=0$ for $0 \leq i,j\leq D$. Therefore $C=0$.
\end{proof}

\noindent We bring in some notation.
For $u,v\in V$ we define a vector $u \circ v \in V$ as follows. Write 
\begin{align*}
u=\sum_{x \in X} u_x x, \qquad \qquad  v = \sum_{x \in X} v_x x, \qquad \qquad u_x, v_x \in \mathbb C.
\end{align*}
We define
\begin{align}
u \circ v = \sum_{x \in X} u_x v_x x. \label{eq:circMeaning}
\end{align}
\begin{lemma} {\rm (See 
\cite[Theorem~9.4 and Definition~11.1]{int}.)}
 \label{lem:circ} For $0 \leq i,j\leq D$ such that $\vert i-j\vert >1$,
\begin{align*}
E_i(E_1V \circ E_jV) = 0.
\end{align*}
\end{lemma}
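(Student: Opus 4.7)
The plan is to reduce the lemma to the dual tridiagonal relation $E_i A^*(z) E_j = 0$ for $|i-j|>1$, by recognizing $\circ$-multiplication by a vector in $E_1 V$ as the action of a dual adjacency map.

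I would begin by fixing $z \in X$ and defining $A^*(z) \in {\rm End}(V)$ to be the diagonal operator with $A^*(z) y = \theta^*_{\partial(z,y)}\, y$ for $y \in X$. By Lemma~\ref{lem:dpi}, the vector $|X|\, E_1 z = \sum_{\xi \in X} \xi\, \theta^*_{\partial(\xi,z)}$ has $y$-coordinate $\theta^*_{\partial(z,y)}$ in the $X$-basis. Consequently, for every $v = \sum_{x \in X} v_x x \in V$,
\[
A^*(z)\, v \;=\; \sum_{x \in X} v_x\, \theta^*_{\partial(z,x)}\, x \;=\; |X|\,\bigl(v \circ E_1 z\bigr),
\]
where $\circ$ is the vector product from \eqref{eq:circMeaning}. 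Thus $\circ$-multiplication by the distinguished generating vectors $E_1 z$ of $E_1 V$ coincides, up to the scalar $|X|^{-1}$, with the action of the honest linear operator $A^*(z)$.

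Next, I would invoke the $Q$-polynomial hypothesis. The content of Definition~11.1 of \cite{int} is equivalent to the statement $E_i A^*(z) E_j = 0$ whenever $|i-j|>1$, for every base vertex $z \in X$; this is the natural dual of the standard tridiagonal relation $E^*_i A E^*_j = 0$ from Theorem~9.4 of \cite{int} that was already used in Lemma~\ref{prop:EsAEs}. Applied to an arbitrary $v \in E_j V$, this dual tridiagonal vanishing yields
\[
|X|\, E_i(v \circ E_1 z) \;=\; E_i A^*(z)\, v \;=\; E_i A^*(z) E_j\, v \;=\; 0,
\]
so $E_i(E_1 z \circ v) = 0$ for every $z \in X$ and every $v \in E_j V$. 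Since $E_1 V = {\rm Span}\{E_1 z : z \in X\}$ and the product $\circ$ is bilinear, bilinear extension then gives $E_i(E_1 V \circ E_j V) = 0$, as required.

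The main obstacle is purely conceptual: one has to spot the identity $A^*(z) v = |X|\,(v \circ E_1 z)$, which converts entrywise multiplication with a generator of $E_1 V$ into a bona fide linear operator whose eigenspace behavior is governed by the $Q$-polynomial structure. Once this translation is in hand, the argument is just the bilinear spreading of a single dual-tridiagonal vanishing and no genuine computation remains.
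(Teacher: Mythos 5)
Your proof is correct. The paper itself gives no proof for this lemma—only the citation to \cite[Theorem~9.4 and Definition~11.1]{int}—so there is no written-out argument in the paper to compare against; your derivation is a valid, self-contained unpacking of that citation. The decisive step is the identity $A^*(z)\,v = |X|\,(v\circ E_1 z)$, which converts $\circ$-multiplication by a spanning vector $E_1 z$ of $E_1 V$ into the diagonal operator $A^*(z)$ (the dual adjacency map with base vertex $z$); combined with the $Q$-polynomial tridiagonality $E_i A^*(z) E_j = 0$ for $|i-j|>1$, together with $E_1 V = \mathrm{Span}\{E_1 z : z \in X\}$ and bilinearity of $\circ$, this gives exactly the claimed vanishing. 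Two small remarks: you describe the vanishing $E_i A^*(z)E_j = 0$ for $|i-j|>1$ as \emph{equivalent} to Definition~11.1, but the $Q$-polynomial definition also requires the off-diagonal blocks $E_{i\pm 1} A^*(z) E_i$ to be nonzero; only the implication (the vanishing part) is needed here, and that direction holds. Also, your parenthetical attributing the primal relation $E^*_i A E^*_j = 0$ to Theorem~9.4 of \cite{int} is speculative—the paper derives that relation directly from the triangle inequality in Lemma~\ref{prop:EsAEs}, without a citation—but this does not affect the correctness of your argument.
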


\begin{lemma} \label{prop:EAsE} For $0 \leq i,j\leq D$ such that $\vert i-j \vert >1$, we have
\begin{align*}
&E_i^{(2)} A^{*(1)} E_j^{(2)} =0, \qquad \qquad E_i^{(3)} A^{*(1)} E_j^{(3)} =0, \\
&E_i^{(3)} A^{*(2)} E_j^{(3)} =0, \qquad \qquad E_i^{(1)} A^{*(2)} E_j^{(1)} =0, \\
&E_i^{(1)} A^{*(3)} E_j^{(1)} =0, \qquad \qquad E_i^{(2)} A^{*(3)} E_j^{(2)} =0.
\end{align*}
\end{lemma}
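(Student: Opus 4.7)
\textbf{Proof proposal for Lemma \ref{prop:EAsE}.}

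The plan is to reduce the six assertions to one by $S_3$-symmetry and then to deduce that single case from Lemma \ref{lem:circ} via the classical dual adjacency map on $V$. Exactly as in Lemma \ref{prop:EsAEs}, the three tensor slots are interchangeable, so it suffices to prove $E_i^{(2)} A^{*(1)} E_j^{(2)} = 0$ whenever $|i-j|>1$; the remaining five equations will follow by permuting the slots.

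First I would decompose $V^{\otimes 3} = \bigoplus_{x, z \in X} x \otimes V \otimes z$ and check that each summand $x \otimes V \otimes z$ is invariant under both $E_j^{(2)}$ and $A^{*(1)}$. The first claim is immediate from Lemma \ref{lem:E3action}. For the second, writing $v = \sum_{y \in X} v_y y$, Definition \ref{def:mapsAAAs} gives $A^{*(1)}(x \otimes v \otimes z) = x \otimes \bigl(\sum_{y \in X} v_y \theta^*_{\partial(y,z)} y\bigr) \otimes z$. Introducing the dual adjacency map $A^*_z \in {\rm End}(V)$ defined by $A^*_z y = \theta^*_{\partial(y,z)} y$ for $y \in X$, on each summand $x \otimes V \otimes z$ the middle-slot identification makes the operators $E_i^{(2)}$, $A^{*(1)}$, $E_j^{(2)}$ correspond respectively to $E_i$, $A^*_z$, $E_j$ on $V$. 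The claim thus reduces to showing $E_i A^*_z E_j = 0$ on $V$ for each $z \in X$.

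For the final step I would invoke Lemma \ref{lem:dpi}, which gives $|X| E_1 z = \sum_{\xi \in X} \theta^*_{\partial(\xi, z)} \xi$; by the definition of $\circ$ in \eqref{eq:circMeaning} this rewrites as $A^*_z v = |X| (E_1 z) \circ v$ for every $v \in V$. For $v \in E_j V$ the vector $(E_1 z) \circ v$ lies in $E_1 V \circ E_j V$, and Lemma \ref{lem:circ} then forces $E_i((E_1 z) \circ v) = 0$ whenever $|i-j|>1$, yielding $E_i A^*_z E_j = 0$ and completing the proof. The only mildly delicate step in this plan is recognizing that $A^{*(1)}$ restricted to each $x \otimes V \otimes z$ acts as the standard dual adjacency map $A^*_z$ on the middle factor; once that bridge is identified, the result is a direct consequence of the $Q$-polynomial tridiagonality encoded by Lemma \ref{lem:circ}.
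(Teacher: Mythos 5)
Your proof is correct and follows essentially the same route as the paper: both arguments reduce to the single case $E_i^{(2)} A^{*(1)} E_j^{(2)} = 0$ by $S_3$-symmetry, identify the middle-slot action of $A^{*(1)}$ as Schur multiplication by $|X|\,E_1 z$ via Lemma \ref{lem:dpi}, and then invoke Lemma \ref{lem:circ}. The only difference is presentational: you package the computation as the statement $E_i A^*_z E_j = 0$ in ${\rm End}(V)$ on each summand $x \otimes V \otimes z$, whereas the paper carries out the same chain of equalities directly on a basis vector $x \otimes y \otimes z$.
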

\begin{proof} By $S_3$-symmetry, it suffices to prove that $E_i^{(2)} A^{*(1)} E_j^{(2)} =0$.  To prove this equation,
we show that for $x,y,z\in X$,
\begin{align*} 
E_i^{(2)} A^{*(1)} E_j^{(2)} (x \otimes y \otimes z) = 0.
\end{align*}
Write
$ E_j y = \sum_{\xi \in X} \alpha_{\xi} \xi$  $(\alpha_\xi \in \mathbb C)$.
 We have
\begin{align*}
E_i^{(2)} A^{*(1)} E_j^{(2)} (x \otimes y \otimes z) &= E_i^{(2)} A^{*(1)}  (x \otimes E_j y \otimes z)  \qquad  \qquad \qquad {\hbox{\rm by Lemma \ref{lem:E3action}}} \\
& =    E_i^{(2)} A^{*(1)}   \sum_{\xi \in X}  x\otimes \xi \otimes z \alpha_\xi \\
& =    E_i^{(2)}  \sum_{\xi \in X}  x\otimes \xi \otimes z \alpha_\xi \theta^*_{\partial(\xi, z)}      \qquad \qquad \;{\hbox{\rm by Definition  \ref{def:mapsAAAs}}}       \\
& =    \vert X \vert E_i^{(2)} \Bigl( x \otimes \bigl( E_1 z \circ E_j y \bigr) \otimes z \Bigr)            \quad  {\hbox{\rm by Lemma \ref{lem:dpi} and  \eqref{eq:circMeaning}}}            \\
& =     \vert X \vert   \biggl( x \otimes \Bigl(E_i \bigl( E_1 z \circ E_j y \bigr)\Bigr) \otimes z \biggr)       \qquad  {\hbox{\rm by Lemma \ref{lem:E3action}}   }       \\
&=0 \qquad \qquad \qquad \qquad \qquad \qquad \qquad \quad \; {\hbox{\rm by Lemma \ref{lem:circ}. }}
\end{align*}

\end{proof}

\begin{proposition} \label{prop:AsAAs}
For distinct $r,s \in \lbrace 1,2,3 \rbrace$ we have
\begin{align*}
\bigl \lbrack A^{(r)}, A^{(r)2} A^{*(s)}  - \beta A^{(r)}  A^{*(s)} A^{(r)} + A^{*(s)} A^{(r)2} - \gamma(A^{(r)}  A^{*(s)} +A^{*(s)}A^{(r)}) - \varrho A^{*(s)} \bigr\rbrack &=0.
\end{align*}
\end{proposition}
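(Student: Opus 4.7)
The plan is to mirror the proof of Proposition \ref{prop:AAsA} almost verbatim, with the roles of the starred and unstarred objects interchanged. Let $C$ denote the expression inside the outer commutator on the left, so the goal is to show $\lbrack A^{(r)}, C\rbrack = 0$, or equivalently, by taking the natural approach, to show $[A^{(r)},C]=0$ by sandwiching with the resolution of the identity coming from the primitive idempotents of $A^{(r)}$ rather than those of $A^{*(s)}$.

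First, I would write
\begin{align*}
[A^{(r)},C] = I\,[A^{(r)},C]\,I = \sum_{i=0}^D \sum_{j=0}^D E_i^{(r)}[A^{(r)},C]E_j^{(r)},
\end{align*}
using $I = \sum_{i=0}^D E_i^{(r)}$ from Lemma \ref{lem:E3prop}. Then I would show that each term vanishes. Since $A^{(r)}E_j^{(r)} = \theta_j E_j^{(r)}$ and $E_i^{(r)}A^{(r)} = \theta_i E_i^{(r)}$, the commutator $[A^{(r)},C]$ produces a factor $(\theta_i-\theta_j)$, and the middle $C$ itself, after moving all $A^{(r)}$ factors to the outside, evaluates on $E_i^{(r)} \cdots E_j^{(r)}$ as a polynomial in $\theta_i,\theta_j$ applied to $E_i^{(r)} A^{*(s)} E_j^{(r)}$. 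Explicitly,
\begin{align*}
E_i^{(r)}\,[A^{(r)},C]\,E_j^{(r)} = E_i^{(r)} A^{*(s)} E_j^{(r)} \cdot (\theta_i - \theta_j)\, P(\theta_i,\theta_j),
\end{align*}
where
\begin{align*}
P(\lambda,\mu) = \lambda^2 - \beta\lambda\mu + \mu^2 - \gamma(\lambda+\mu) - \varrho.
\end{align*}

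Next I would observe, case by case on $|i-j|$, that at least one of the three factors vanishes. If $|i-j|>1$, then $E_i^{(r)}A^{*(s)}E_j^{(r)} = 0$ by Lemma \ref{prop:EAsE} (this is the key input that replaces Lemma \ref{prop:EsAEs} from the companion proposition). If $|i-j|=1$, then $P(\theta_i,\theta_j) = 0$ by Lemma \ref{lem:TTR}(iii), applied to the eigenvalue sequence $\lbrace \theta_i\rbrace$ rather than the dual sequence. If $i=j$, then $\theta_i - \theta_j = 0$. Summing over $i,j$ yields $[A^{(r)},C]=0$.

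I do not anticipate a serious obstacle: the entire argument is dual to Proposition \ref{prop:AAsA} under the symbolic interchange $A \leftrightarrow A^*$, $r \leftrightarrow s$, $\gamma^* \leftrightarrow \gamma$, $\varrho^* \leftrightarrow \varrho$, $\theta^* \leftrightarrow \theta$, $E^{*(s)} \leftrightarrow E^{(r)}$. The only substantive ingredient that had to be set up separately is Lemma \ref{prop:EAsE}, since the vanishing of $E_i^{(r)}A^{*(s)}E_j^{(r)}$ for $|i-j|>1$ is not immediate from the triangle inequality (as in Lemma \ref{prop:EsAEs}) but rather requires the $Q$-polynomial property through Lemma \ref{lem:circ}. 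With that lemma in hand the bookkeeping is identical. The mildest care needed is to confirm the sign conventions and that $P(\theta_i,\theta_j)$ is indeed symmetric in $i,j$ so that it vanishes for both $j=i+1$ and $j=i-1$, which is immediate from the form of $P$.
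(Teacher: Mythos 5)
Your proposal is correct and follows the paper's proof essentially verbatim: sandwich with $\sum_i E_i^{(r)}$ on both sides, reduce each block $E_i^{(r)}[A^{(r)},C]E_j^{(r)}$ to $E_i^{(r)} A^{*(s)} E_j^{(r)}\,(\theta_i-\theta_j)\,P(\theta_i,\theta_j)$, and observe that one of the three factors vanishes according as $|i-j|>1$ (Lemma \ref{prop:EAsE}), $|i-j|=1$ (Lemma \ref{lem:TTR}(iii)), or $i=j$. You also correctly identified that the only non-mechanical step in dualizing from Proposition \ref{prop:AAsA} is the need for Lemma \ref{prop:EAsE}, which rests on Lemma \ref{lem:circ} and the $Q$-polynomial property rather than the triangle inequality.
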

\begin{proof} Let $C$ denote the expression on the left. We show that $C=0$. We have
\begin{align*}
C = I C I = \Biggl( \sum_{i=0}^D E_i^{(r)} \Biggr) C  \Biggl( \sum_{j=0}^D E_j^{(r)} \Biggr) = \sum_{i=0}^D \sum_{j=0}^D E_i^{(r)}C E_j^{(r)}.
\end{align*}
For $0\leq i,j\leq D$ we show that $E_i^{(r)}C E_j^{(r)}=0$. Using $E_i^{(r)} A^{(r)} = \theta_i E_i^{(r)}$ and  $A^{(r)}E_j^{(r)}  = \theta_j E_j^{(r)}$, we obtain
\begin{align} \label{eq:term}
E_i^{(r)}C E_j^{(r)}=E_i^{(r)} A^{*(s)}  E_j^{(r)} (\theta_i - \theta_j)P(\theta_i, \theta_j),
\end{align}
where the polynomial $P(\lambda, \mu)$ is defined by
\begin{align*}
P(\lambda, \mu) = \lambda^2 - \beta \lambda \mu + \mu^2 - \gamma(\lambda + \mu) - \varrho.
\end{align*}
We examine the factors on the right in \eqref{eq:term}.
If  $\vert i - j \vert >1$ then $E_i^{(r)} A^{*(s)}  E_j^{(r)}=0$ by Lemma  \ref{prop:EAsE}.
If $\vert i-j\vert = 1$ then $P(\theta_i, \theta_j)=0$ by Lemma  \ref{lem:TTR}(iii).
If $i=j$ then of course  $\theta_i - \theta_j=0$.
By these comments, the expression on the right in  \eqref{eq:term} is equal to zero.
We have shown that $E_i^{(r)}C E_j^{(r)}=0$ for $0 \leq i,j\leq D$. Therefore $C=0$.
\end{proof}

\begin{corollary} \label{cor:VVVmod} For the scalars $\beta, \gamma, \gamma^*, \varrho, \varrho^*$ from Lemma \ref{lem:TTR}, the vector space $V^{\otimes 3} $ becomes a $\mathbb T(\beta, \gamma, \gamma^*, \varrho, \varrho^*)$-module on which
\begin{align*}
&A_1 = A^{(1)}, \qquad A_2 = A^{(2)}, \qquad A_3 = A^{(3)},  \\
&A^*_1 = A^{*(1)}, \qquad A^*_2 = A^{*(2)}, \qquad A^*_3 = A^{*(3)}.
\end{align*}
\end{corollary}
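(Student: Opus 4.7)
The plan is to verify that the six maps $A^{(1)}, A^{(2)}, A^{(3)}, A^{*(1)}, A^{*(2)}, A^{*(3)}$ satisfy the three families of defining relations for $\mathbb T(\beta, \gamma, \gamma^*, \varrho, \varrho^*)$ listed in Definition \ref{def:symT}. Once this is done, the universal property of an algebra defined by generators and relations immediately produces an algebra homomorphism $\mathbb T(\beta, \gamma, \gamma^*, \varrho, \varrho^*) \to {\rm End}(V^{\otimes 3})$ that sends each $A_i$ to $A^{(i)}$ and each $A^*_i$ to $A^{*(i)}$, giving $V^{\otimes 3}$ its $\mathbb T$-module structure.

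The verification is essentially a collation of results already proved in Sections 6--8. For relation (i) of Definition \ref{def:symT}, the commutation $[A^{(i)}, A^{(j)}]=0$ for $i,j \in \lbrace 1,2,3\rbrace$ is part of Lemma \ref{lem:ABC}, and the dual commutation $[A^{*(i)}, A^{*(j)}]=0$ is part of Lemma \ref{lem:ABCs}. For relation (ii) of Definition \ref{def:symT}, the commutation $[A^{(i)}, A^{*(i)}]=0$ for each $i \in \lbrace 1,2,3\rbrace$ is exactly the content of Proposition \ref{lem:AASB}.

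For relation (iii), the two tridiagonal relations for distinct $i,j \in \lbrace 1,2,3\rbrace$ are exactly what Proposition \ref{prop:AsAAs} (the first one, with $A^{(r)}$ playing the role of $A_i$ and $A^{*(s)}$ playing the role of $A_j^*$) and Proposition \ref{prop:AAsA} (the second one) assert, valid for any distinct $r, s \in \lbrace 1, 2, 3\rbrace$. So the defining relations of $\mathbb T(\beta, \gamma, \gamma^*, \varrho, \varrho^*)$ are satisfied, and the module structure follows.

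There is no genuine obstacle here; the work has already been distributed into the preceding sections, and the role of the corollary is to package them. The only point one might emphasize is that the pairing between the index $i$ in the tridiagonal algebra generators and the tensor slot acted upon is symmetric under $S_3$ in both the $A^{(\cdot)}$ family and the $A^{*(\cdot)}$ family, so that no matter which distinct pair $(r,s)$ one picks, Propositions \ref{prop:AAsA} and \ref{prop:AsAAs} cover the required instance of relation (iii).
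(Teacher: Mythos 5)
Your proposal is correct and matches the paper's proof exactly: the paper's one-line proof cites Definition \ref{def:symT} together with Lemmas \ref{lem:ABC}, \ref{lem:ABCs} and Propositions \ref{lem:AASB}, \ref{prop:AAsA}, \ref{prop:AsAAs}, which is precisely the collation you spell out. You simply make explicit the correspondence between each defining relation and the relevant prior result.
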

\begin{proof} Use Definition \ref{def:symT} along with Lemmas \ref{lem:ABC},  \ref{lem:ABCs} and Propositions  \ref{lem:AASB}, \ref{prop:AAsA},
 \ref{prop:AsAAs}.
\end{proof}

\noindent Theorem \ref{thm:main} follows from Definitions \ref{def:mapsAAA}, \ref{def:mapsAAAs} and Corollary  \ref{cor:VVVmod}.
\medskip

\noindent We end this section with some comments.
\medskip

\noindent The vector space $V$ contains the vector  ${\bf 1} = \sum_{x \in X} x$. By \cite[Section~2]{int} we have
\begin{align}
E_0 x = \vert X \vert^{-1} {\bf 1} \qquad \qquad (x \in X).
\label{eq:E0x}
\end{align}

\begin{lemma} \label{lem:zzz} For distinct $r,s \in \lbrace 1,2,3\rbrace$ the following holds on $V^{\otimes 3}$:
\begin{align*} 
\vert X \vert E_0^{(r)} E_0^{*(s)} E_0^{(r)} = E_0^{(r)}, \qquad \qquad \vert X \vert E_0^{*(r)} E_0^{(s)} E_0^{*(r)} = E_0^{*(r)}.
\end{align*}
\end{lemma}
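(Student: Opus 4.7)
The plan is to prove both identities by a direct, basis-level computation, after using the evident $S_3$-symmetry of the setup to reduce to one representative case.

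First I would observe that permuting the three tensor factors of $V^{\otimes 3}$ intertwines the maps $A^{(r)}, A^{*(r)}$ in the natural $S_3$-equivariant way (this is the same symmetry invoked repeatedly in the preceding lemmas, e.g.\ Lemmas \ref{lem:Aeig}, \ref{lem:AAAsEig}). Consequently it suffices to establish the first identity for $(r,s)=(1,2)$ and the second identity for $(r,s)=(1,2)$; all other cases follow by relabeling tensor coordinates.

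For the first identity, the plan is to compute $|X|E_0^{(1)}E_0^{*(2)}E_0^{(1)}$ on an arbitrary basis vector $x\otimes y\otimes z$ with $x,y,z\in X$. Using Lemma \ref{lem:E3action} together with \eqref{eq:E0x}, we get $E_0^{(1)}(x\otimes y\otimes z)=E_0 x\otimes y\otimes z=|X|^{-1}\,\mathbf 1\otimes y\otimes z=|X|^{-1}\sum_{a\in X}a\otimes y\otimes z$. By Lemma \ref{lem:E3saction}, applying $E_0^{*(2)}$ keeps only those summands with $\partial(a,z)=0$, i.e.\ $a=z$, leaving $|X|^{-1}z\otimes y\otimes z$. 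Applying $E_0^{(1)}$ once more and using \eqref{eq:E0x} again yields $|X|^{-2}\sum_{a\in X}a\otimes y\otimes z=|X|^{-1}E_0^{(1)}(x\otimes y\otimes z)$. Multiplying by $|X|$ gives the first identity.

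For the second identity, the same scheme works in reverse. On $x\otimes y\otimes z$, Lemma \ref{lem:E3saction} shows $E_0^{*(1)}(x\otimes y\otimes z)=x\otimes y\otimes z$ if $y=z$ and is $0$ otherwise, so both sides vanish unless $y=z$; assume $y=z$. Then Lemma \ref{lem:E3action} and \eqref{eq:E0x} give $E_0^{(2)}(x\otimes y\otimes y)=|X|^{-1}\sum_{b\in X}x\otimes b\otimes y$, and applying $E_0^{*(1)}$ keeps only $b=y$, producing $|X|^{-1}x\otimes y\otimes y=|X|^{-1}E_0^{*(1)}(x\otimes y\otimes z)$. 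Multiplying by $|X|$ finishes the proof.

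I do not expect any genuine obstacle: the whole argument is bookkeeping once one unpacks the definitions. The only point that needs care is making sure the $S_3$-reduction is used correctly (it just permutes tensor factors) and that the formula $E_0x=|X|^{-1}\mathbf 1$ from \eqref{eq:E0x} is applied in the right slot, so that the two factors of $|X|^{-1}$ combine with the one explicit $|X|$ to give the equality on the nose.
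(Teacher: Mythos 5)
Your proof is correct and follows essentially the same route as the paper: both reduce to a single representative pair $(r,s)$ by $S_3$-symmetry and then verify the identities by direct basis-level computation on $x\otimes y\otimes z$, using the actions of $E_0^{(\cdot)}$ and $E_0^{*(\cdot)}$ from Lemmas \ref{lem:E3action} and \ref{lem:E3saction} together with \eqref{eq:E0x}. The only cosmetic difference is that the paper displays the chain of maps as a commutative-diagram chase rather than writing out the sums explicitly.
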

\begin{proof} By $S_3$-symmetry, we may assume that $r=1$ and $s=2$. We first show that
$\vert X \vert E_0^{(1)} E_0^{*(2)} E_0^{(1)} =E_0^{(1)}$.
Let $x,y,z \in X$. The map $E_0^{(1)}$ sends
\begin{align*}
x \otimes y \otimes z \mapsto \vert X \vert^{-1} {\bf 1} \otimes y \otimes z.
\end{align*}
The map $\vert X \vert E_0^{(1)} E_0^{*(2)} E_0^{(1)}$ sends 
\begin{align*}
{\begin{CD}
x\otimes y \otimes z  @>>\vert X \vert E_0^{(1)}  > {\bf 1} \otimes y \otimes z @>>E_0^{*(2)}> z \otimes y \otimes z @>>E_0^{(1)} > \vert X \vert^{-1} {\bf 1} \otimes y \otimes z.
                                      \end{CD}} 
\end{align*}
\noindent We have shown that 
$\vert X \vert E_0^{(1)} E_0^{*(2)} E_0^{(1)} =E_0^{(1)}$. Next we show that $\vert X \vert E_0^{*(1)} E_0^{(2)} E_0^{*(1)} = E_0^{*(1)}$.
Let $x,y,z \in X$. The map $E_0^{*(1)}$ sends
\begin{align*}
x \otimes y \otimes z \mapsto \delta_{y,z}x  \otimes y \otimes y.
\end{align*}
The map $\vert X \vert E_0^{*(1)} E_0^{(2)} E_0^{*(1)}$ sends 
\begin{align*}
{\begin{CD}
x\otimes y \otimes z  @>> E_0^{*(1)}  > \delta_{y,z} x \otimes y \otimes y @>>\vert X \vert E_0^{(2)}> \delta_{y,z}  x \otimes {\bf 1} \otimes y @>>E_0^{*(1)} > \delta_{y,z} x \otimes y \otimes y.
                                      \end{CD}} 
\end{align*}
\noindent We have shown  that $\vert X \vert E_0^{*(1)} E_0^{(2)} E_0^{*(1)} = E_0^{*(1)}$. The result follows.
\end{proof}

\section{The fundamental $\mathbb T$-module}
 We continue to discuss the $Q$-polynomial distance-regular graph  $\Gamma=(X,\mathcal R)$ from Section 5.  
 Recall the scalars $\beta, \gamma, \gamma^*, \varrho, \varrho^*$ from Lemma \ref{lem:TTR}. Consider the
 standard module $V$ and the vector space $V^{\otimes 3}$.
 In Theorem \ref{thm:main} we turned  $V^{\otimes 3}$ into a module for the algebra $\mathbb T = \mathbb T(\beta, \gamma, \gamma^*, \varrho, \varrho^*)$.
 In this section, we discuss a certain $\mathbb T$-submodule of $V^{\otimes 3}$, said to be fundamental.
 To facilitate this discussion, we bring
in some Hermitean forms.
\medskip

\noindent We define a Hermitean form $( \,,\,) : V \times V \to \mathbb C$ as follows. Pick $u,v\in V$ and write
\begin{align*}
u= \sum_{x \in X} u_x x, \qquad \qquad v = \sum_{x \in X} v_x x, \qquad \qquad u_x, v_x \in \mathbb C.
\end{align*}
Then 
\begin{align*}
(u,v ) =\sum_{x \in X} u_x {\overline v_x},
\end{align*}
where $- $ denotes the complex-conjugate. We abbreviate $\Vert u\Vert^2 = (u,u)$. Note that $(\,,\,)$ is the unique
Hermitean form $V\times V \to \mathbb C$ with respect to which the basis  $X$ is orthonormal.
For $x,y \in X$ we have
\begin{align*}
(Ax,y) = (x,Ay) =  \begin{cases} 1 & {\mbox{\rm if $\partial(x,y)=1$}}; \\
0, & {\mbox{\rm if $\partial(x,y)\not= 1$}}.
\end{cases} 
\end{align*}
Moreover for $u,v \in V$ we have
\begin{align*}
(Au,v) = (u,Av),\qquad \qquad \quad (E_i u,v) = (u,E_i v) \qquad (0 \leq i \leq D).
\end{align*}

\begin{lemma} \label{lem:Herm} The following hold.
\begin{enumerate}
\item[\rm (i)]
There exists a unique Hermitean form $\langle\,,\,\rangle: V^{\otimes 3} \times V^{\otimes 3} \to \mathbb C$ 
with respect to which the basis $X^{\otimes 3}$  is orthonormal.
\item[\rm (ii)]  For $u,v,w,u',v',w' \in V$ we have
\begin{align*} 
\langle u \otimes v \otimes w, u' \otimes v' \otimes w' \rangle = (u,u')( v, v' ) (w, w' ).
\end{align*}
\end{enumerate}
\end{lemma}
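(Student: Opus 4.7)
The plan is to prove part (i) by the standard construction of the Hermitean form via its values on a basis, and then deduce part (ii) by checking that both sides are sesquilinear and agree on basis vectors.

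For part (i), I would first establish uniqueness: any Hermitean form $\langle\,,\,\rangle$ on $V^{\otimes 3}$ is determined by its values on pairs from the basis $X^{\otimes 3}$, via sesquilinear extension (linear in the first argument, conjugate-linear in the second). If $X^{\otimes 3}$ is orthonormal, those values are forced to be $\delta_{x\otimes y\otimes z,\, x'\otimes y'\otimes z'}$, so at most one such form exists. For existence, I would define $\langle\,,\,\rangle$ on $V^{\otimes 3} \times V^{\otimes 3}$ by the formula
\begin{align*}
\Bigl\langle \sum_{x,y,z} a_{x,y,z}\, x\otimes y \otimes z,\ \sum_{x,y,z} b_{x,y,z}\, x\otimes y\otimes z\Bigr\rangle = \sum_{x,y,z} a_{x,y,z}\,\overline{b_{x,y,z}},
\end{align*}
and verify directly that it is sesquilinear, Hermitean-symmetric, and renders $X^{\otimes 3}$ orthonormal.

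For part (ii), I would argue that both sides of the claimed identity are sesquilinear functions of the six vector arguments $u,v,w,u',v',w'$. Therefore it suffices to verify the identity when each argument is a basis vector. For $u=x$, $v=y$, $w=z$, $u'=x'$, $v'=y'$, $w'=z'$ in $X$, the right-hand side equals $(x,x')(y,y')(z,z') = \delta_{x,x'}\delta_{y,y'}\delta_{z,z'}$, since $X$ is orthonormal in $V$. The left-hand side equals the same Kronecker-delta product by the orthonormality of $X^{\otimes 3}$ established in part (i). Extending by sesquilinearity in each argument yields the general formula.

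I do not expect any real obstacle here; the entire lemma is a direct unpacking of the definition of the tensor-product Hermitean form. The only mild care needed is to be consistent about which argument is conjugated (the paper uses the convention that conjugation occurs in the second slot), and to apply sesquilinear extension to all six arguments separately in part (ii) rather than attempting a single computation.
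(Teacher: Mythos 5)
Your proposal is correct and fills in exactly the standard argument that the paper leaves implicit (the paper's proof simply states that (i) is clear and (ii) is routinely checked). The sesquilinear-extension argument for uniqueness, the explicit coordinate formula for existence, and the reduction of (ii) to basis vectors are all the intended routine verifications.
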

\begin{proof} Item (i) is clear. Item (ii) is routinely checked.
\end{proof}

\begin{lemma} \label{lem:VVVH} For $r \in \lbrace 1,2,3\rbrace $ and $u,v \in V^{\otimes 3}$ we have
\begin{align*}
\langle A^{(r)} u,v\rangle = \langle u, A^{(r)} v \rangle, \qquad \qquad
\langle A^{*(r)} u,v\rangle = \langle u, A^{*(r)} v \rangle.
\end{align*}
\end{lemma}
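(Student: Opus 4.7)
The plan is to reduce the verification to basis vectors $x \otimes y \otimes z$ and then exploit the product structure of the Hermitean form from Lemma \ref{lem:Herm}(ii), together with the already-recorded self-adjointness of $A$ and of the primitive idempotents $E_i$ on $V$. By sesquilinearity of $\langle\,,\,\rangle$ it suffices to check each identity on pairs of basis elements in $X^{\otimes 3}$, so the proof splits into two independent (and $S_3$-symmetric) verifications.

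For the maps $A^{*(r)}$ the argument is essentially trivial. By $S_3$-symmetry consider $A^{*(1)}$. From Definition~\ref{def:mapsAAAs}, $A^{*(1)}$ is diagonal in the basis $X^{\otimes 3}$, with real eigenvalue $\theta^*_{\partial(y,z)}$ on $x \otimes y \otimes z$; reality comes from the statement in Section~5 that $\theta^*_i \in \mathbb R$ for $0 \leq i \leq D$. For any two basis vectors $x \otimes y \otimes z$ and $x' \otimes y' \otimes z'$, both $\langle A^{*(1)}(x\otimes y \otimes z), x'\otimes y'\otimes z'\rangle$ and $\langle x\otimes y \otimes z, A^{*(1)}(x'\otimes y'\otimes z')\rangle$ vanish unless the two basis vectors coincide, in which case both equal $\theta^*_{\partial(y,z)}$. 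Hence $A^{*(1)}$ is self-adjoint.

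For the maps $A^{(r)}$, by $S_3$-symmetry consider $A^{(1)}$. Here I would invoke Lemma~\ref{lem:Atensor}, which gives $A^{(1)}(u \otimes v \otimes w) = Au \otimes v \otimes w$ on decomposable tensors. Combining this with Lemma~\ref{lem:Herm}(ii) yields
\begin{align*}
\langle A^{(1)}(u \otimes v \otimes w), u' \otimes v' \otimes w'\rangle &= (Au,u')(v,v')(w,w') \\
&= (u,Au')(v,v')(w,w') \\
&= \langle u \otimes v \otimes w, A^{(1)}(u' \otimes v' \otimes w')\rangle,
\end{align*}
where the middle equality is the self-adjointness of $A$ on $V$ recorded in the paragraph preceding Lemma~\ref{lem:Herm}. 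By sesquilinearity this extends from decomposable tensors to arbitrary $u,v \in V^{\otimes 3}$.

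There is no genuine obstacle here; the result is a routine bookkeeping exercise from the two identities $(Au,v)=(u,Av)$ on $V$ and the product formula in Lemma~\ref{lem:Herm}(ii). The only mild point worth flagging is the reality of $\theta^*_i$, which is needed to make the diagonal entries of $A^{*(r)}$ equal to their complex conjugates; this is supplied by the standing assumptions in Section~5.
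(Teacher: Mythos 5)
Your proof is correct and takes essentially the same approach as the paper: the paper reduces to basis vectors in $X^{\otimes 3}$ and declares the verification routine, while you carry out that routine check explicitly (diagonality plus reality of $\theta^*_i$ for $A^{*(r)}$, and Lemma~\ref{lem:Atensor} together with Lemma~\ref{lem:Herm}(ii) and the self-adjointness of $A$ for $A^{(r)}$).
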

\begin{proof} Without loss of generality, we may assume that $u,v$ are contained in the basis $X^{\otimes 3}$ of $V^{\otimes 3}$.
For such $u,v$ the result is routinely checked.
\end{proof}

\begin{lemma} \label{lem:VVVE} For $r \in \lbrace 1,2,3\rbrace $ and $u,v \in V^{\otimes 3}$ we have
\begin{align*}
\langle E_i^{(r)} u,v\rangle = \langle u, E_i^{(r)} v \rangle, \qquad \quad
\langle E_i^{*(r)} u,v\rangle = \langle u, E_i^{*(r)} v \rangle, \qquad \quad (0 \leq i \leq D).
\end{align*}
\end{lemma}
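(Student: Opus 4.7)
The plan is to reduce the statement to Lemma \ref{lem:VVVH} via the explicit polynomial formulas for the primitive idempotents. Recall from Section 5 that the eigenvalues $\theta_i$ and dual eigenvalues $\theta^*_i$ are all real. By Lemma \ref{lem:E3prop}, for each $r \in \lbrace 1,2,3\rbrace$ and $0 \leq i \leq D$,
\begin{align*}
E_i^{(r)} = \prod_{\stackrel{0 \leq j \leq D}{j \neq i}} \frac{A^{(r)} - \theta_j I}{\theta_i - \theta_j},
\end{align*}
so $E_i^{(r)}$ is a polynomial in $A^{(r)}$ with real coefficients. Likewise by Lemma \ref{lem:E3sprop}, $E_i^{*(r)}$ is a real polynomial in $A^{*(r)}$.

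Next I would observe that the Hermitean adjoint with respect to $\langle\,,\,\rangle$ is complex-antilinear on ${\rm End}(V^{\otimes 3})$, $\mathbb R$-linear on the real subspace of self-adjoint operators, multiplicative in reverse order, and hence sends every real polynomial in a self-adjoint operator $B$ to the same real polynomial in $B$. Since Lemma \ref{lem:VVVH} guarantees that $A^{(r)}$ and $A^{*(r)}$ are self-adjoint with respect to $\langle\,,\,\rangle$, it follows that each $E_i^{(r)}$ and each $E_i^{*(r)}$ is self-adjoint as well, yielding both equalities in the statement.

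The main obstacle is essentially formal and thin: one only needs to verify, by induction on degree, that powers $B^n$ of a self-adjoint operator $B$ are self-adjoint, which follows immediately from $\langle Bu,v\rangle = \langle u, Bv\rangle$ applied repeatedly, together with the fact that the coefficients $\prod_{j \neq i}(\theta_i-\theta_j)^{-1}$ and the constants $\theta_j$ (respectively $\theta^*_j$) lie in $\mathbb R$ so they are unaffected by complex conjugation. No $S_3$-symmetry reduction is needed, since the argument is the same for all six operators.
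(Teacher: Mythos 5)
Your proof is correct and follows essentially the same route as the paper: both invoke the polynomial formulas for $E_i^{(r)}$ and $E_i^{*(r)}$ from Lemmas \ref{lem:E3prop}, \ref{lem:E3sprop}, note that the eigenvalues (hence the coefficients) are real, and then conclude from the self-adjointness of $A^{(r)}$, $A^{*(r)}$ established in Lemma \ref{lem:VVVH}. You simply spell out the ``real polynomial in a self-adjoint operator is self-adjoint'' step that the paper leaves implicit.
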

\begin{proof} By Lemma  \ref{lem:E3prop}, $E_i^{(r)} $ is a polynomial in $A^{(r)}$ that has real coefficients. 
By Lemma  \ref{lem:E3sprop}, $E_i^{*(r)} $ is a polynomial in $A^{*(r)}$ that has real coefficients. 
The result follows in view of Lemma \ref{lem:VVVH}.
\end{proof}

\noindent 
Let $U$ denote a subspace of the vector space $V^{\otimes 3}$. 
 Recall the orthogonal complement
\begin{align*}
U^\perp = \lbrace v \in V^{\otimes 3}  \vert \langle u,v \rangle = 0 \; \forall u \in U\rbrace.
\end{align*}
\noindent By linear algebra, the sum $V^{\otimes 3} = U + U^\perp$ is direct. Let $W$ denote a subspace of $V^{\otimes 3}$ that contains $U$.
By linear algebra, the sum $W = U + U^\perp \cap W$ is direct. We call $U^\perp \cap W$ the {\it orthogonal complement of $U$ in $W$}.

\begin{definition}\rm A $\mathbb T$-module $W$ is called {\it irreducible} whenever $W\not=0$ and $W$ does not contain a  $\mathbb T$-submodule besides $0$ and $W$.
\end{definition}
\begin{lemma} \label{lem:3p} The following {\rm (i)--(iii)} hold.
\begin{enumerate}
\item[\rm (i)]
Let $W$ denote a $\mathbb T$-submodule of  $V^{\otimes 3}$, and let $U$ denote a $\mathbb T$-submodule of $W$.
Then the orthogonal complement of $U$ in $W$ is a $\mathbb T$-submodule.
\item[\rm (ii)] Every nonzero $\mathbb T$-submodule of $V^{\otimes 3}$ is an orthogonal direct sum of irreducible $\mathbb T$-submodules.
\item[\rm (iii)] The $\mathbb T$-module $V^{\otimes 3}$ is an orthogonal direct sum of irreducible $\mathbb T$-submodules.
\end{enumerate}
\end{lemma}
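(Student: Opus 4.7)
The plan is to use the fact that the generators $A_1,A_2,A_3,A^*_1,A^*_2,A^*_3$ of $\mathbb T$ act on $V^{\otimes 3}$ by operators that are self-adjoint with respect to the Hermitean form $\langle\,,\,\rangle$ from Lemma \ref{lem:Herm}. This is exactly the content of Lemma \ref{lem:VVVH} combined with Corollary \ref{cor:VVVmod}. Once this is in hand, the three statements follow by the standard complete-reducibility argument for a finite-dimensional module carrying a nondegenerate Hermitean form invariant under a set of self-adjoint generators.

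For part (i), I would pick $v \in U^\perp \cap W$ and check that $A_r v$ and $A^*_r v$ lie in $U^\perp \cap W$ for $r \in \{1,2,3\}$. Membership in $W$ is immediate because $W$ is a $\mathbb T$-submodule. For orthogonality to $U$, take any $u \in U$ and compute, using Lemma \ref{lem:VVVH},
\begin{align*}
\langle u, A_r v\rangle = \langle A_r u, v\rangle = 0, \qquad \langle u, A^*_r v\rangle = \langle A^*_r u, v\rangle = 0,
\end{align*}
where the last equalities hold because $A_r u, A^*_r u \in U$ (since $U$ is a $\mathbb T$-submodule) and $v \in U^\perp$. Since $\mathbb T$ is generated by the $A_r$ and $A^*_r$, this shows $U^\perp \cap W$ is $\mathbb T$-invariant.

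For part (ii), I would proceed by induction on $\dim W$. If $W$ is irreducible there is nothing to prove. Otherwise pick a $\mathbb T$-submodule $U$ of $W$ with $0 \neq U \neq W$, and let $U' = U^\perp \cap W$. By (i), $U'$ is a $\mathbb T$-submodule of $W$, and since the Hermitean form restricted to $W$ is positive definite (as $\langle\,,\,\rangle$ is positive definite on $V^{\otimes 3}$ by Lemma \ref{lem:Herm}(i)), we have $W = U \oplus U'$ as an orthogonal direct sum of $\mathbb T$-submodules, with $\dim U, \dim U' < \dim W$. Applying the induction hypothesis to $U$ and $U'$ yields the desired decomposition of $W$. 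Part (iii) is then the special case $W = V^{\otimes 3}$ of part (ii) (combined with the trivial observation that $V^{\otimes 3}$ is itself a $\mathbb T$-submodule of $V^{\otimes 3}$).

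There is no real obstacle here: the only thing one has to verify carefully is the self-adjointness of the generators, and this is already recorded in Lemma \ref{lem:VVVH}. The positive-definiteness of $\langle\,,\,\rangle$, which is what actually makes orthogonal complementation a direct-sum decomposition, is built into the definition of the form on the orthonormal basis $X^{\otimes 3}$.
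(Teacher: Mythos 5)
Your proposal is correct and follows the same route as the paper: the paper's proof of (i) cites exactly Lemma \ref{lem:VVVH} (self-adjointness of the generators' actions), proves (ii) by induction on dimension using (i), and obtains (iii) as a special case. You have simply written out the details that the paper leaves implicit.
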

\begin{proof} (i) By Lemma  \ref{lem:VVVH}  and since the $\mathbb T$-generators act on $V^{\otimes 3}$ as $\lbrace A^{(r)}\rbrace_{r=1}^3$, $\lbrace A^{*(r)} \rbrace_{r=1}^3$. \\
\noindent (ii) By (i) and induction on the dimension of the $\mathbb T$-submodule in question. \\
\noindent (iii) This is a special case of (ii).
\end{proof}
\noindent Recall that $V$ contains the vector  ${\bf 1} = \sum_{x \in X} x$. 
By \eqref{eq:E0x} we have $E_0V={\rm Span}({\bf 1})$.
We abbreviate ${\bf 1}^{\otimes 3} = {\bf 1} \otimes {\bf 1}\otimes {\bf 1}$ and note that
\begin{align} \label{eq:one3}
{\bf 1}^{\otimes 3} = \sum_{x,y,z \in X} x \otimes y \otimes z.
\end{align}
We have
\begin{align*}
E_0^{(1)} E_0^{(2)} E_0^{(3)} V^{\otimes 3} = E_0 V \otimes E_0V \otimes E_0V = {\rm Span}({\bf 1}^{\otimes 3}).
\end{align*}

\begin{proposition} There exists a unique irreducible $\mathbb T$-submodule of $V^{\otimes 3}$ that contains  ${\bf 1}^{\otimes 3}$. 
\end{proposition}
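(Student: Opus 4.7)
The plan is to exploit complete reducibility of $V^{\otimes 3}$ as a $\mathbb T$-module (Lemma \ref{lem:3p}(iii)) together with the one-dimensionality of the subspace $E_0^{(1)} E_0^{(2)} E_0^{(3)} V^{\otimes 3} = {\rm Span}({\bf 1}^{\otimes 3})$ displayed just before the proposition. The rank-one projection cut out by these three commuting idempotents is the crucial tool: because it lies in the image of $\mathbb T$, it must respect every decomposition of $V^{\otimes 3}$ into $\mathbb T$-submodules, and because its image is one dimensional, it cannot ``spread'' ${\bf 1}^{\otimes 3}$ over several irreducible summands.

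For existence, I would write $V^{\otimes 3} = \bigoplus_\alpha W_\alpha$ as an orthogonal direct sum of irreducible $\mathbb T$-submodules by Lemma \ref{lem:3p}(iii), and expand ${\bf 1}^{\otimes 3} = \sum_\alpha w_\alpha$ with $w_\alpha \in W_\alpha$. Set $e = E_0^{(1)} E_0^{(2)} E_0^{(3)}$. By Lemma \ref{lem:E3prop}, each $E_0^{(r)}$ is a polynomial in $A^{(r)}$, so $e$ is in the image of $\mathbb T$ on $V^{\otimes 3}$ and in particular preserves each $W_\alpha$. Applying $e$ and using $e {\bf 1}^{\otimes 3} = {\bf 1}^{\otimes 3}$ yields
\begin{align*}
{\bf 1}^{\otimes 3} = e {\bf 1}^{\otimes 3} = \sum_\alpha e w_\alpha, \qquad e w_\alpha \in W_\alpha \cap {\rm Span}({\bf 1}^{\otimes 3}).
\end{align*}
Since ${\bf 1}^{\otimes 3} \neq 0$, at least one $e w_{\alpha_0}$ is a nonzero scalar multiple of ${\bf 1}^{\otimes 3}$; this forces ${\bf 1}^{\otimes 3} \in W_{\alpha_0}$, and $W_{\alpha_0}$ is the desired irreducible $\mathbb T$-submodule.

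For uniqueness the argument is short. Suppose $\Lambda_1$ and $\Lambda_2$ are two irreducible $\mathbb T$-submodules of $V^{\otimes 3}$ each containing ${\bf 1}^{\otimes 3}$. Then $\Lambda_1 \cap \Lambda_2$ is a $\mathbb T$-submodule of both, and is nonzero since it contains ${\bf 1}^{\otimes 3}$. Irreducibility of $\Lambda_1$ and $\Lambda_2$ then forces $\Lambda_1 = \Lambda_1 \cap \Lambda_2 = \Lambda_2$.

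I do not expect a serious obstacle: the whole argument rests on the single observation that the rank-one projection $e$ onto ${\rm Span}({\bf 1}^{\otimes 3})$ belongs to the $\mathbb T$-action, which prevents ${\bf 1}^{\otimes 3}$ from having nontrivial components in more than one irreducible summand of any complete decomposition. The only small care needed is to recall that $E_0^{(r)}$ is a polynomial in $A^{(r)}$ so that $e$ is genuinely implemented by an element of $\mathbb T$.
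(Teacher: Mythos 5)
Your proof is correct and follows essentially the same route as the paper: both rest on the complete reducibility from Lemma \ref{lem:3p}(iii) and on the fact that the projection $E_0^{(1)}E_0^{(2)}E_0^{(3)}$ lies in the image of $\mathbb T$ and has image ${\rm Span}({\bf 1}^{\otimes 3})$, and the uniqueness arguments are identical. The only cosmetic difference is that the paper detects the relevant summand $W$ via self-adjointness of the idempotents with respect to the Hermitean form, while you apply the projection directly to the components of ${\bf 1}^{\otimes 3}$; both are valid.
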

\begin{proof} By Lemma  \ref{lem:3p}(iii), the $\mathbb T$-module $V^{\otimes 3}$ is a direct sum of irreducible $\mathbb T$-submodules.
These $\mathbb T$-submodules cannot all be orthogonal to ${\bf 1}^{\otimes 3}$,
so there exists an irreducible $\mathbb T$-submodule $W$ of $V^{\otimes 3}$ that is not orthogonal to ${\bf 1}^{\otimes 3}$.
We have
\begin{align*}
 0 \not= \langle {\bf 1}^{\otimes 3}, W\rangle = \langle E_0^{(1)} E_0^{(2)} E_0^{(3)}{\bf 1}^{\otimes 3}, W\rangle =  \langle {\bf 1}^{\otimes 3}, E_0^{(1)} E_0^{(2)} E_0^{(3)}W\rangle
 \end{align*}
 so $E_0^{(1)} E_0^{(2)} E_0^{(3)}W\not=0$. 
 We have
 \begin{align*}
 0 \not = E_0^{(1)} E_0^{(2)} E_0^{(3)}W  \subseteq   E_0^{(1)} E_0^{(2)} E_0^{(3)} V^{\otimes 3}  = {\rm Span} \bigl({\bf 1}^{\otimes 3}\bigr).
 \end{align*}
 Therefore
 \begin{align*}
 {\bf 1}^{\otimes 3} \in E_0^{(1)} E_0^{(2)} E_0^{(3)}W \subseteq W.
 \end{align*}
 We have shown that ${\bf 1}^{\otimes 3}$ is contained in the irreducible $\mathbb T$-submodule $W$.   Suppose that ${\bf 1}^{\otimes 3}$ is contained in an  irreducible $\mathbb T$-submodule $W'$.
 The $\mathbb T$-submodule  $W\cap W' $ is nonzero since it contains ${\bf 1}^{\otimes 3}$. By this and irreducibility,
 $W = W \cap W' = W'$.
\end{proof}

\begin{definition} \label{def:fun} \rm Let $\Lambda $ denote the unique irreducible $\mathbb T$-submodule of $V^{\otimes 3}$ that contains  ${\bf 1}^{\otimes 3}$.
The $\mathbb T$-submodule  $\Lambda$ is called {\it fundamental}.
\end{definition}

\begin{lemma} \label{lem:gen} We have $\Lambda = \mathbb T ({\bf 1}^{\otimes 3})$. In other words, the $\mathbb T$-module $\Lambda$ is generated by ${\bf 1}^{\otimes 3}$.
\end{lemma}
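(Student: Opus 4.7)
The plan is straightforward, leveraging the irreducibility of $\Lambda$ established in the previous proposition. Let $W = \mathbb T({\bf 1}^{\otimes 3})$, the $\mathbb T$-submodule of $V^{\otimes 3}$ generated by ${\bf 1}^{\otimes 3}$; equivalently, $W$ is the smallest $\mathbb T$-submodule of $V^{\otimes 3}$ containing ${\bf 1}^{\otimes 3}$.

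First, I would observe that $W \subseteq \Lambda$. Indeed, by Definition \ref{def:fun}, $\Lambda$ is a $\mathbb T$-submodule of $V^{\otimes 3}$ that contains ${\bf 1}^{\otimes 3}$; since $W$ is the smallest such submodule, we have $W \subseteq \Lambda$.

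Next I would note that $W \neq 0$, because ${\bf 1}^{\otimes 3} \in W$ and ${\bf 1}^{\otimes 3} \neq 0$ (its coordinates in the basis $X^{\otimes 3}$ are all equal to $1$, as seen from \eqref{eq:one3}). Therefore $W$ is a nonzero $\mathbb T$-submodule of the irreducible $\mathbb T$-module $\Lambda$. By irreducibility, $W = \Lambda$, which is the desired conclusion.

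There is no significant obstacle here; the content of the lemma is essentially a restatement of the irreducibility of $\Lambda$ together with the fact that $\Lambda$ contains ${\bf 1}^{\otimes 3}$, both of which were established in the proof of the preceding proposition.
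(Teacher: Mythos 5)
Your proof is correct and is essentially the same as the paper's: both establish $\mathbb T({\bf 1}^{\otimes 3}) \subseteq \Lambda$ from the fact that $\Lambda$ is a $\mathbb T$-submodule containing ${\bf 1}^{\otimes 3}$, note that $\mathbb T({\bf 1}^{\otimes 3})$ is a nonzero $\mathbb T$-submodule, and conclude by irreducibility. No issues.
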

\begin{proof} The $\mathbb T$-module $\Lambda$ contains ${\bf 1}^{\otimes 3}$, so $\mathbb T ({\bf 1}^{\otimes 3}) \subseteq \mathbb T \Lambda  \subseteq \Lambda$.
The subspace $\mathbb T({\bf 1}^{\otimes 3})$ is a nonzero $\mathbb T$-submodule of $\Lambda$, so  $\mathbb T({\bf 1}^{\otimes 3})=\Lambda$ by the irreducibility of $\Lambda$.
\end{proof}

\noindent  In order to describe $\Lambda$, we will display some vectors contained in $\Lambda$. This is our goal for the rest of the section.
\medskip

\noindent Before we get into the details, we would like to acknowledge that the vectors on display are well known in the context of
Norton algebras  \cite[Section~5]{norton}, 
scaffolds \cite[Theorem~3.8]{scaffold}, and the triple-product relations for the subconstituent algebra \cite[Section~8]{int}.

\begin{definition} \label{def:Phij} For $0 \leq h,i,j\leq D$ define 
\begin{align*}
P_{h,i,j} = \sum_{\stackrel{x,y,z\in X}{       \stackrel{\partial(y,z)=h}         {   \stackrel{\partial(z,x) = i}{\stackrel{ \partial(x,y)=j}{}}       }       }} x \otimes y \otimes z.
\end{align*}
\end{definition}

\begin{lemma} \label{lem:PLam}
The following hold for $0 \leq h,i,j\leq D$:
\begin{enumerate}
\item[\rm (i)] $P_{h,i,j} = E_h^{*(1)} E_i^{*(2)} E_j^{*(3)} ({\bf 1}^{\otimes 3})$;
\item[\rm (ii)] $P_{h,i,j} \in \Lambda$.
\end{enumerate}
\end{lemma}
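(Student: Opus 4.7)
The plan is to prove the two parts in order, with (i) being a direct computation from the basis action formulas and (ii) following from (i) together with the fact that $\Lambda$ is closed under the $\mathbb T$-action.

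For part (i), I would start with the expansion ${\bf 1}^{\otimes 3} = \sum_{x,y,z \in X} x \otimes y \otimes z$ from \eqref{eq:one3}. The three operators $E_h^{*(1)}$, $E_i^{*(2)}$, $E_j^{*(3)}$ act diagonally on the basis $X^{\otimes 3}$ according to Lemma \ref{lem:E3saction}: each one either fixes a basis vector or annihilates it, based on one of the three pairwise distances $\partial(y,z)$, $\partial(z,x)$, $\partial(x,y)$. In particular, they pairwise commute on $V^{\otimes 3}$ (this is the content of Lemma \ref{lem:ABCs}, since they are polynomials in the mutually commuting maps $A^{*(r)}$), so we may apply them in any order. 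Applying all three to a basis vector $x\otimes y\otimes z$ yields $x\otimes y\otimes z$ if simultaneously $\partial(y,z)=h$, $\partial(z,x)=i$, $\partial(x,y)=j$, and $0$ otherwise. Summing over $x,y,z\in X$ and comparing with Definition \ref{def:Phij} gives the identity in (i).

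For part (ii), the key observation is that in the $\mathbb T$-module structure on $V^{\otimes 3}$ supplied by Corollary \ref{cor:VVVmod}, the generators $A^*_1, A^*_2, A^*_3$ act as $A^{*(1)}, A^{*(2)}, A^{*(3)}$ respectively. By Lemma \ref{lem:E3sprop}, each primitive idempotent $E_i^{*(r)}$ is a polynomial in $A^{*(r)}$, hence lies in the image of $\mathbb T$ acting on $V^{\otimes 3}$. Consequently, the composition $E_h^{*(1)} E_i^{*(2)} E_j^{*(3)}$ also acts on $V^{\otimes 3}$ via some element of (the image of) $\mathbb T$. Since $\Lambda$ is a $\mathbb T$-submodule containing ${\bf 1}^{\otimes 3}$ (Definition \ref{def:fun}, or equivalently Lemma \ref{lem:gen}), we conclude that $E_h^{*(1)} E_i^{*(2)} E_j^{*(3)}({\bf 1}^{\otimes 3}) \in \Lambda$, which together with (i) gives $P_{h,i,j}\in \Lambda$.

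Neither step is difficult. The only mild subtlety is the commutativity of the three starred primitive idempotents, but this is immediate from Lemma \ref{lem:ABCs} and the fact that each $E_i^{*(r)}$ is a polynomial in $A^{*(r)}$. The whole proof should fit in a few lines.
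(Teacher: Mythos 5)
Your proposal is correct and follows essentially the same route as the paper: part (i) is the direct diagonal computation on the basis $X^{\otimes 3}$ using Lemma \ref{lem:E3saction} and \eqref{eq:one3}, and part (ii) follows because the starred idempotents are polynomials in the $\mathbb T$-generators acting on $V^{\otimes 3}$, so they preserve the $\mathbb T$-submodule $\Lambda$ containing ${\bf 1}^{\otimes 3}$. The commutativity remark is harmless but not needed, since the idempotents are applied in a fixed order and each acts diagonally on the basis.
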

\begin{proof} (i) Use Lemma \ref{lem:E3saction} and  \eqref{eq:one3}. \\
\noindent (ii) By (i) and since ${\bf 1}^{\otimes 3} \in \Lambda$.
\end{proof}

\noindent Recall from \cite[Section~2]{int} the valencies $k_i $ $(0 \leq i \leq D)$.
\begin{lemma} \label{lem:Pnorm} The following vectors are mutually orthogonal:
\begin{align*}
P_{h,i,j} \qquad \qquad 0 \leq h,i,j\leq D.
\end{align*} For $0 \leq h,i,j\leq D$ we have
\begin{align} \label{eq:Pnorm}
\Vert P_{h,i,j} \Vert^2 = \vert X \vert k_h p^h_{i,j}.
\end{align}
\end{lemma}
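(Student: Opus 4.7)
The plan is to exploit the fact that $X^{\otimes 3}$ is an orthonormal basis for $V^{\otimes 3}$ with respect to $\langle\,,\,\rangle$, so that for any vectors $u = \sum_{t} u_t t$ and $v = \sum_t v_t t$ expanded in this basis one has $\langle u,v\rangle = \sum_t u_t \overline{v_t}$. By the definition of $P_{h,i,j}$, each $P_{h,i,j}$ is the sum, with coefficient $1$, of the basis vectors in the set
\begin{equation*}
S_{h,i,j} = \bigl\{x\otimes y \otimes z \in X^{\otimes 3} : \partial(y,z)=h,\; \partial(z,x)=i,\; \partial(x,y)=j\bigr\}.
\end{equation*}

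For the mutual orthogonality, I would observe that the distance triple $(\partial(y,z), \partial(z,x), \partial(x,y))$ attached to a tensor $x \otimes y \otimes z \in X^{\otimes 3}$ is determined by the tensor itself, so the sets $S_{h,i,j}$ and $S_{h',i',j'}$ are disjoint whenever $(h,i,j) \neq (h',i',j')$. Since the sums $P_{h,i,j}$ and $P_{h',i',j'}$ then have disjoint support in the orthonormal basis $X^{\otimes 3}$, their inner product is $0$.

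For the norm formula, the orthonormality of $X^{\otimes 3}$ together with the fact that every coefficient of $P_{h,i,j}$ is $1$ yields $\Vert P_{h,i,j}\Vert^2 = |S_{h,i,j}|$, reducing the claim to the combinatorial count $|S_{h,i,j}| = |X|\, k_h\, p^h_{i,j}$. I would enumerate the triples $(x,y,z) \in S_{h,i,j}$ in stages: first choose $z \in X$ freely, contributing the factor $|X|$; then choose $y \in \Gamma_h(z)$, contributing $k_h$ by definition of the valency; finally, with $y,z$ fixed at distance $h$, count the vertices $x \in X$ satisfying $\partial(z,x) = i$ and $\partial(y,x) = j$, which by the very definition of the intersection number equals $p^h_{i,j}$. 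Multiplying these counts gives the stated norm. No real obstacle arises: both assertions reduce immediately to the definitions of $P_{h,i,j}$, the valencies $k_h$, and the intersection numbers $p^h_{i,j}$ from the theory of distance-regular graphs.
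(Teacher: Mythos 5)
Your proposal is correct and matches the paper's (very terse) proof, which simply cites Definition of $P_{h,i,j}$ and the orthonormality of $X^{\otimes 3}$; you have merely spelled out the disjoint-support argument and the stage-by-stage count $|X|\cdot k_h\cdot p^h_{i,j}$ that the paper leaves implicit.
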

\begin{proof} By Definition  \ref{def:Phij} and since $X^{\otimes 3}$
is an orthonormal basis for $V^{\otimes 3}$.
\end{proof}

\begin{lemma} \label{lem:Pzero}
$P_{h,i,j} =0$ if and only if $p^h_{i,j} =0$   $(0 \leq h,i,j\leq D)$.
\end{lemma}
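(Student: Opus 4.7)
The plan is to deduce this immediately from the norm formula in Lemma \ref{lem:Pnorm}. By that lemma,
\[
\Vert P_{h,i,j}\Vert^2 = \vert X\vert \,k_h\, p^h_{i,j},
\]
and the form $\langle\,,\,\rangle$ is positive definite on $V^{\otimes 3}$ since $X^{\otimes 3}$ is an orthonormal basis, so $P_{h,i,j}=0$ iff $\Vert P_{h,i,j}\Vert^2=0$. Because $\vert X\vert\neq 0$ and the valency $k_h\neq 0$ for $0\leq h\leq D$, this is equivalent to $p^h_{i,j}=0$.

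Alternatively (and without invoking the Hermitean form), one can argue directly from Definition \ref{def:Phij}. The vector $P_{h,i,j}$ is a sum of distinct elements of the basis $X^{\otimes 3}$, each appearing with coefficient $1$, namely those triples $x\otimes y\otimes z$ with $\partial(y,z)=h$, $\partial(z,x)=i$, $\partial(x,y)=j$. If $p^h_{i,j}=0$ then no such triple exists (since $p^h_{i,j}$ counts, for any fixed pair of vertices at distance $h$, the number of third vertices realizing the distances $i$ and $j$), so the sum is empty and $P_{h,i,j}=0$. Conversely, if $p^h_{i,j}\neq 0$ then at least one such triple exists, and since $X^{\otimes 3}$ is linearly independent, $P_{h,i,j}\neq 0$.

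There is no real obstacle here; the only point worth being careful about is the first direction, where one must use that $k_h\neq 0$ (equivalently, that $\Gamma$ has vertices at every distance $0,1,\ldots,D$), so the factor $k_h$ in \eqref{eq:Pnorm} cannot vanish and be confused with the vanishing of $p^h_{i,j}$. I would present the proof in one or two lines, citing Lemma \ref{lem:Pnorm} and the fact that $\vert X\vert k_h\neq 0$.
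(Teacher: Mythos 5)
Your primary argument is exactly the paper's proof: the lemma is immediate from the norm formula \eqref{eq:Pnorm} together with the positive definiteness of the form and the fact that $\vert X\vert k_h\neq 0$. The alternative combinatorial argument from Definition \ref{def:Phij} is a fine bonus but not needed; the proposal is correct.
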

\begin{proof} Immediate from \eqref{eq:Pnorm}.
\end{proof}

\begin{lemma} \label{lem:Pfacts}
We have
\begin{align} \label{eq:Psum}
{\bf 1}^{\otimes 3}= \sum_{h=0}^D \sum_{i=0}^D \sum_{j=0}^D P_{h,i,j}.
\end{align}
Moreover,
\begin{align}
P_{0,0,0} = \sum_{x \in X} x \otimes x \otimes x.
\label{eq:P000}
\end{align}
\end{lemma}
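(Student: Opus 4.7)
The plan is to prove both identities directly from Definition \ref{def:Phij}, with an optional second route for \eqref{eq:Psum} via Lemma \ref{lem:PLam}(i).

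For \eqref{eq:Psum}, I would start from Definition \ref{def:Phij} and observe that for every triple $(x,y,z) \in X \times X \times X$ the three distances $\partial(y,z)$, $\partial(z,x)$, $\partial(x,y)$ are uniquely determined integers in $\{0,1,\ldots,D\}$. Therefore as $(h,i,j)$ ranges over $\{0,\ldots,D\}^3$, the sets
\[
\bigl\{(x,y,z) \in X^{\otimes 3} : \partial(y,z)=h,\ \partial(z,x)=i,\ \partial(x,y)=j\bigr\}
\]
partition $X^{\otimes 3}$, and summing over $(h,i,j)$ collects each basis vector $x \otimes y \otimes z$ exactly once. The resulting total is $\sum_{x,y,z \in X} x \otimes y \otimes z = \mathbf{1}^{\otimes 3}$ by \eqref{eq:one3}. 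Alternatively, one may combine Lemma \ref{lem:PLam}(i) with $\sum_{h=0}^D E_h^{*(r)} = I$ from Lemma \ref{lem:E3sprop} applied successively for $r=1,2,3$; this immediately gives $\sum_{h,i,j} P_{h,i,j} = I \cdot \mathbf{1}^{\otimes 3} = \mathbf{1}^{\otimes 3}$.

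For \eqref{eq:P000}, specializing Definition \ref{def:Phij} at $h=i=j=0$ restricts the sum to triples $(x,y,z)$ satisfying $\partial(y,z) = \partial(z,x) = \partial(x,y) = 0$. Since $\partial$ is a metric, each of these equalities forces the corresponding pair of vertices to coincide, so $x = y = z$. The sum therefore collapses to $\sum_{x \in X} x \otimes x \otimes x$, as claimed.

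There is no real obstacle here: both parts are immediate bookkeeping from the definition, and the only thing to be careful about is making sure the indexing set of $P_{h,i,j}$ truly covers all of $X^{\otimes 3}$ exactly once (which it does because every ordered triple of vertices has a well-defined triple of pairwise distances in $\{0,\ldots,D\}$). The whole proof should be two or three lines.
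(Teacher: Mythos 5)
Your proof is correct and takes essentially the same approach as the paper: both \eqref{eq:Psum} and \eqref{eq:P000} follow by direct bookkeeping from Definition \ref{def:Phij} together with \eqref{eq:one3}, exactly as the paper does. Your alternate route via $\sum_h E_h^{*(r)} = I$ is a valid rephrasing of the same partition argument, not a genuinely different proof.
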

\begin{proof} To obtain \eqref{eq:Psum}, use  \eqref{eq:one3} and Definition \ref{def:Phij}. To obtain
\eqref{eq:P000}, set $h=i=j=0$ in Definition \ref{def:Phij}.
\end{proof}

\begin{definition} \label{def:Qhij}  {\rm (See \cite[Section~5]{norton}.)}  For $0 \leq h,i,j\leq D$ define 
\begin{align*}
Q_{h,i,j} = \vert X \vert \sum_{x \in X} E_h x \otimes E_i x \otimes E_j x.
\end{align*}
\end{definition}

\begin{lemma} \label{lem:QLam}
The following hold for $0 \leq h,i,j\leq D$:
\begin{enumerate}
\item[\rm (i)] $Q_{h,i,j} =\vert X \vert E_h^{(1)} E_i^{(2)} E_j^{(3)} (P_{0,0,0})$;
\item[\rm (ii)] $Q_{h,i,j} \in \Lambda$.
\end{enumerate}
\end{lemma}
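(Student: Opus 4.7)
The plan is to handle (i) by a direct computation using the explicit description of $P_{0,0,0}$ from \eqref{eq:P000} and the action formulas for the primitive idempotents on simple tensors, and then deduce (ii) essentially for free from (i) by invoking the fact that $\Lambda$ is a $\mathbb T$-submodule closed under polynomials in the generators.

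For part (i), I would start from Lemma \ref{lem:Pfacts}, which gives $P_{0,0,0} = \sum_{x \in X} x \otimes x \otimes x$. Apply $E_h^{(1)} E_i^{(2)} E_j^{(3)}$ to each summand; by Lemma \ref{lem:E3action} these three idempotents act on each of the three tensor factors independently, so
\begin{align*}
E_h^{(1)} E_i^{(2)} E_j^{(3)}(x \otimes x \otimes x) = E_h x \otimes E_i x \otimes E_j x.
\end{align*}
Summing over $x \in X$ and multiplying by $\vert X \vert$ yields exactly the expression in Definition \ref{def:Qhij}, proving (i).

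For part (ii), I would argue as follows. By Lemma \ref{lem:PLam}(ii) the vector $P_{0,0,0}$ lies in $\Lambda$. By Corollary \ref{cor:VVVmod} the $\mathbb T$-generators act on $V^{\otimes 3}$ as $A^{(1)}, A^{(2)}, A^{(3)}$ and $A^{*(1)}, A^{*(2)}, A^{*(3)}$; in particular $A^{(r)}$ lies in the image of $\mathbb T$. By Lemma \ref{lem:E3prop} each $E_h^{(r)}$ is a polynomial in $A^{(r)}$, so each $E_h^{(r)}$ acts on $V^{\otimes 3}$ as an element of the image of $\mathbb T$. Since $\Lambda$ is a $\mathbb T$-submodule of $V^{\otimes 3}$, it is invariant under $E_h^{(1)}, E_i^{(2)}, E_j^{(3)}$. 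Combined with part (i), this gives $Q_{h,i,j} \in \Lambda$.

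There is no real obstacle here; both parts are essentially bookkeeping. The only thing to double-check is that the $E_h^{(r)}$ really do act via $\mathbb T$, which follows from Corollary \ref{cor:VVVmod} together with the polynomial expression for primitive idempotents in Lemma \ref{lem:E3prop}.
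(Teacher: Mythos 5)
Your proposal is correct and follows essentially the same route as the paper's proof: part (i) by combining the formula $P_{0,0,0}=\sum_{x\in X}x\otimes x\otimes x$ with the component-wise action of the primitive idempotents from Lemma \ref{lem:E3action}, and part (ii) by noting $P_{0,0,0}\in\Lambda$ and that $\Lambda$ is invariant under the $E_h^{(r)}$ because these are polynomials in the $\mathbb T$-generator images. The paper states this more tersely but the reasoning is identical.
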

\begin{proof} (i) Use Lemma \ref{lem:E3action}  and \eqref{eq:P000}.\\
\noindent (ii) By (i) and since $P_{0,0,0} \in \Lambda$ by Lemma  \ref{lem:PLam}(ii).
\end{proof}

\noindent  Recall from \cite[p.~64]{banIto} or \cite[Section~5]{int} the Krein parameters $q^h_{i,j}$ $(0 \leq h,i,j\leq D)$.
Define $m_h = {\rm dim}(E_hV)$ for $0 \leq h \leq D$.

\begin{lemma} \label{lem:Qnorm} {\rm (See \cite[Lemma~4.2]{norton}.)} The following vectors are mutually orthogonal:
\begin{align*}
Q_{h,i,j} \qquad \qquad 0 \leq h,i,j\leq D.
\end{align*} For $0 \leq h,i,j\leq D$ we have
\begin{align} \label{eq:Qnorm}
\Vert Q_{h,i,j} \Vert^2 = \vert X \vert m_h q^h_{i,j}.
\end{align}
\end{lemma}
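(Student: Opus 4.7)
The plan is to expand $\langle Q_{h,i,j}, Q_{h',i',j'}\rangle$ directly from Definition \ref{def:Qhij} and Lemma \ref{lem:Herm}(ii), then exploit the fact that each primitive idempotent $E_r$ is a real symmetric matrix satisfying $E_r E_s = \delta_{r,s} E_r$. First I would write
\[
\langle Q_{h,i,j}, Q_{h',i',j'}\rangle = |X|^2 \sum_{x,y \in X} (E_h x, E_{h'} y)(E_i x, E_{i'} y)(E_j x, E_{j'} y),
\]
and then use the identity $(E_r u, E_s v) = (u, E_r E_s v) = \delta_{r,s}(u, E_r v)$ to conclude that the right-hand side vanishes unless $h=h'$, $i=i'$, $j=j'$. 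This immediately yields the mutual orthogonality.

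For the norm, when the triples coincide, the same computation gives
\[
\|Q_{h,i,j}\|^2 = |X|^2 \sum_{x,y \in X} (E_h)_{y,x}(E_i)_{y,x}(E_j)_{y,x},
\]
where $(E_r)_{y,x}$ denotes the matrix entry of $E_r$ in the basis $X$ (using that $E_r$ is real and symmetric so that $(E_r x, y) = (E_r)_{y,x}$). The next step is to recognize the right side as $|X|^2\,\mathrm{tr}\bigl((E_h \circ E_i)\, E_j\bigr)$, where $\circ$ is the Hadamard (entrywise) product. Invoking the defining relation for the Krein parameters, $E_h \circ E_i = |X|^{-1} \sum_k q^k_{h,i}\, E_k$, together with $\mathrm{tr}(E_k E_j) = \delta_{k,j}\, m_j$, one obtains $\|Q_{h,i,j}\|^2 = |X|\, m_j\, q^j_{h,i}$.

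To match the stated form $|X| m_h q^h_{i,j}$, I would then invoke the classical symmetry of the Krein parameters, $m_h q^h_{i,j} = m_i q^i_{j,h} = m_j q^j_{h,i}$, which can be cited from \cite{banIto} or \cite{int}. The main technical obstacle is really just this final symmetry statement; the rest is a clean bookkeeping calculation that converts the Hermitian form on $V^{\otimes 3}$ into traces of products of primitive idempotents, and the diagonal collapse driven by $E_r E_s = \delta_{r,s} E_r$ is what simultaneously gives orthogonality and reduces the norm to a single trace.
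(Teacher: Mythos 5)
Your proof is correct, and worth noting: the paper itself gives no proof of this lemma, simply citing Cameron--Goethals--Seidel \cite[Lemma~4.2]{norton}, so what you have done is reconstruct the underlying argument. The expansion via Lemma~\ref{lem:Herm}(ii), the collapse from $E_r E_s = \delta_{r,s}E_r$ together with the self-adjointness of the (real symmetric) $E_r$, and the identification with $\mathrm{tr}\bigl((E_h\circ E_i)E_j\bigr)$ followed by the defining relation $E_h\circ E_i = |X|^{-1}\sum_k q^k_{h,i}E_k$ are all sound.

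One small simplification: the appeal to the Krein-parameter symmetry $m_h q^h_{i,j} = m_j q^j_{h,i}$ at the end is avoidable. The quantity $\sum_{x,y}(E_h)_{xy}(E_i)_{xy}(E_j)_{xy}$ is symmetric in $h,i,j$, so you are free to group the Hadamard product as $E_i\circ E_j$ instead of $E_h\circ E_i$: then
\[
\|Q_{h,i,j}\|^2 = |X|^2\,\mathrm{tr}\bigl(E_h (E_i\circ E_j)\bigr) = |X|^2\cdot |X|^{-1}\sum_k q^k_{i,j}\,\mathrm{tr}(E_h E_k) = |X|\,m_h\,q^h_{i,j},
\]
which lands directly on the stated form with no need for the symmetry identity. (That identity is of course classical and your citation of it is fine; this is just a matter of economy.) Everything else is exactly what a direct proof of the cited Cameron--Goethals--Seidel lemma amounts to.
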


\begin{lemma} \label{lem:Qzero}
$Q_{h,i,j} =0$ if and only if $q^h_{i,j} =0$   $(0 \leq h,i,j\leq D)$.
\end{lemma}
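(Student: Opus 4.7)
The plan is to deduce this immediately from the norm formula \eqref{eq:Qnorm} in Lemma \ref{lem:Qnorm}, in complete parallel with how Lemma \ref{lem:Pzero} was obtained from Lemma \ref{lem:Pnorm}.

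First I would recall that the form $\langle\,,\,\rangle$ on $V^{\otimes 3}$ is Hermitean and positive definite, since by Lemma \ref{lem:Herm} the basis $X^{\otimes 3}$ is orthonormal with respect to it. Hence for any $v \in V^{\otimes 3}$ we have $v = 0$ if and only if $\Vert v\Vert^2 = 0$. Applying this to $v = Q_{h,i,j}$ and using \eqref{eq:Qnorm}, we get
\begin{align*}
Q_{h,i,j} = 0 \iff \Vert Q_{h,i,j}\Vert^2 = 0 \iff \vert X\vert m_h q^h_{i,j} = 0.
\end{align*}

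Next I would note that $\vert X\vert \geq 1$ and $m_h = \dim(E_hV) \geq 1$ (since $E_h$ is a primitive idempotent, so $E_hV$ is a nonzero eigenspace of the adjacency map). Therefore the last equality is equivalent to $q^h_{i,j} = 0$, and the proof is complete.

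There is no real obstacle here; the only thing worth double-checking is that $m_h$ is genuinely nonzero, which is standard (the multiplicities of a distance-regular graph are strictly positive by construction of the primitive idempotents in Section 2 applied to the adjacency map $A$ acting on $V$).
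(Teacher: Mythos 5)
Your proof is correct and takes exactly the same route as the paper, which deduces the lemma immediately from the norm formula \eqref{eq:Qnorm}; you have simply spelled out the positive-definiteness of $\langle\,,\,\rangle$ and the positivity of $\vert X\vert$ and $m_h$ that the paper leaves implicit.
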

\begin{proof} Immediate from \eqref{eq:Qnorm}.
\end{proof}

\begin{lemma} \label{lem:Qfacts}
We have
\begin{align} \label{eq:Qsum}
 P_{0,0,0}= \vert X \vert^{-1} \sum_{h=0}^D \sum_{i=0}^D \sum_{j=0}^D Q_{h,i,j}.
\end{align}
Moreover,
\begin{align}
Q_{0,0,0} =\vert X \vert^{-1} {\bf 1}^{\otimes 3}.
\label{eq:Q000}
\end{align}
\end{lemma}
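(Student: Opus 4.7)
The plan is to derive both identities directly from Definition \ref{def:Qhij} by unfolding the sums and using that the primitive idempotents $\lbrace E_h\rbrace_{h=0}^D$ form a resolution of the identity on $V$.

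For \eqref{eq:Qsum}, the key observation is that $Q_{h,i,j}$ is trilinear in its three idempotents applied to the same $x$, so summing over $h,i,j$ factors componentwise. Concretely, I would write
\begin{align*}
\sum_{h=0}^D \sum_{i=0}^D \sum_{j=0}^D Q_{h,i,j} = \vert X \vert \sum_{x \in X} \Biggl(\sum_{h=0}^D E_h x\Biggr) \otimes \Biggl(\sum_{i=0}^D E_i x\Biggr) \otimes \Biggl(\sum_{j=0}^D E_j x\Biggr),
\end{align*}
and then invoke $I = \sum_{h=0}^D E_h$ from Lemma \ref{lem:E3prop} to replace each parenthesized sum by $x$. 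The right-hand side becomes $\vert X \vert \sum_{x \in X} x \otimes x \otimes x$, which equals $\vert X \vert P_{0,0,0}$ by \eqref{eq:P000}. Dividing by $\vert X \vert$ yields \eqref{eq:Qsum}.

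For \eqref{eq:Q000}, I would specialize Definition \ref{def:Qhij} to $h=i=j=0$ and apply \eqref{eq:E0x}, which gives $E_0 x = \vert X \vert^{-1} {\bf 1}$ for every $x \in X$. Substituting into each of the three tensor factors produces
\begin{align*}
Q_{0,0,0} = \vert X \vert \sum_{x \in X} \bigl(\vert X \vert^{-1} {\bf 1}\bigr) \otimes \bigl(\vert X \vert^{-1} {\bf 1}\bigr) \otimes \bigl(\vert X \vert^{-1} {\bf 1}\bigr) = \vert X \vert \cdot \vert X \vert \cdot \vert X \vert^{-3} {\bf 1}^{\otimes 3},
\end{align*}
which simplifies to $\vert X \vert^{-1} {\bf 1}^{\otimes 3}$ as required. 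Neither part presents a genuine obstacle; the argument is purely a matter of carefully tracking the prefactor $\vert X \vert$ in Definition \ref{def:Qhij} against the normalization $E_0 x = \vert X \vert^{-1} {\bf 1}$ from \eqref{eq:E0x}.
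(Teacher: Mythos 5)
Your proof is correct and follows essentially the same route as the paper: factor the triple sum over $h,i,j$ into a tensor product of sums, apply $I=\sum_h E_h$ to recover $\sum_x x\otimes x\otimes x = P_{0,0,0}$, and for the second identity specialize $h=i=j=0$ and use \eqref{eq:E0x}. The only slight inaccuracy is citing Lemma \ref{lem:E3prop} for $I=\sum_{h=0}^D E_h$ (that lemma concerns $E_h^{(r)}$ on $V^{\otimes 3}$); the identity you want is the resolution of the identity for $A$ on $V$, which is stated in Section 2.
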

\begin{proof}  To obtain \eqref{eq:Qsum}, observe that
\begin{align*}
\vert X \vert^{-1} \sum_{h=0}^D \sum_{i=0}^D \sum_{j=0}^D Q_{h,i,j} &=   \sum_{h=0}^D \sum_{i=0}^D \sum_{j=0}^D \sum_{x \in X} E_h x \otimes E_i x \otimes E_j x \\
&= \sum_{x \in X}  \sum_{h=0}^D \sum_{i=0}^D \sum_{j=0}^D  E_h x \otimes E_i x \otimes E_j x  \\
&=\sum_{x \in X} \Biggl( \sum_{h=0}^D E_h x \Biggr) \otimes \Biggl( \sum_{i=0}^D E_i x \Biggr) \otimes \Biggl( \sum_{j=0}^D E_j x \Biggr) \\
&= \sum_{x \in X} x \otimes x \otimes x \\
&=  P_{0,0,0}.
\end{align*}
\noindent To obtain \eqref{eq:Q000}, set $h=i=j=0$ in Definition  \ref{def:Qhij} and use \eqref{eq:E0x}.
\end{proof}

\section{Two commuting actions}
 We continue to discuss the $Q$-polynomial distance-regular graph  $\Gamma=(X,\mathcal R)$ from Section 5.
 Recall the scalars $\beta, \gamma, \gamma^*, \varrho, \varrho^*$ from Lemma \ref{lem:TTR}. Consider the
 standard module $V$ and the vector space $V^{\otimes 3}$.
 In Theorem \ref{thm:main} we turned  $V^{\otimes 3}$ into a module for the algebra $\mathbb T = \mathbb T(\beta, \gamma, \gamma^*, \varrho, \varrho^*)$.
 In this section, we consider a subgroup $G$ of the automorphism group of $\Gamma$. We describe how $V^{\otimes 3}$ becomes a $G$-module. We show that the  $G$ action on $V^{\otimes 3}$ commutes 
 with the  $\mathbb T$ action on $V^{\otimes 3}$. We use the $G$ action on $V^{\otimes 3}$ to describe the fundamental $\mathbb T$-submodule $\Lambda$.
\medskip

\noindent By an {\it automorphism of $\Gamma$} we mean a permutation $g$ of $X$ such that for all $x,y \in X$,
\begin{align*}
\hbox{\rm  $x,y$ are adjacent if and only if $g(x), g(y)$ are adjacent.}
\end{align*}
A permutation $g$ of $X$ is an automorphism of $\Gamma$ if and only if $\partial(x,y) = \partial \bigl( g(x), g(y) \bigr) $ for all $x,y \in X$.
The automorphism group ${\rm Aut}(\Gamma)$ consists of the automorphisms of $\Gamma$; the group operation is composition. Throughout this section, let $G$
denote a subgroup of ${\rm Aut}(\Gamma)$.
\medskip

\noindent Let us recall how $V$ becomes a $G$-module. 
Pick $v \in V$ and write $v = \sum_{x \in X} v_x x$ $(v_x \in \mathbb C)$. For all $g \in G$,
\begin{align*}
g(v) = \sum_{x \in X} v_x g(x).
\end{align*} 
Since $g$ respects adjacency, we have $gA=Ag$ on $V$.
\medskip

\noindent Next, we describe how $V^{\otimes 3}$ becomes a $G$-module. For $u,v,w \in V$ and $g \in G$ we have
\begin{align*}
g(u \otimes v \otimes w) = g(u) \otimes g(v) \otimes g(w).
\end{align*}

\begin{proposition} \label{prop:GTcom} For $g \in G $ and $B \in \mathbb T$, we have $gB=Bg$ on $V^{\otimes 3}$.
\end{proposition}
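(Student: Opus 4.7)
The plan is to reduce the claim to commutation with the six generators of $\mathbb T$, and then verify each case by a direct calculation on the basis $X^{\otimes 3}$ of $V^{\otimes 3}$. Since the action of $\mathbb T$ on $V^{\otimes 3}$ is by algebra homomorphism with $A_i \mapsto A^{(i)}$ and $A^*_i \mapsto A^{*(i)}$ (Corollary \ref{cor:VVVmod}), and since $g$ acts linearly, it suffices to show that $g$ commutes with each of $A^{(1)}, A^{(2)}, A^{(3)}, A^{*(1)}, A^{*(2)}, A^{*(3)}$ on $V^{\otimes 3}$.

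For the node actions $A^{(r)}$, I would first recall the observation already made in the section that $gA = Ag$ on $V$ (this is a direct consequence of $g$ respecting adjacency together with the action $g(\sum_x v_x x) = \sum_x v_x g(x)$). Then by Lemma \ref{lem:Atensor} and the diagonal action of $g$ on $V^{\otimes 3}$, for any $u,v,w \in V$ and $r \in \{1,2,3\}$ one checks
\begin{align*}
gA^{(r)}(u \otimes v \otimes w) = A^{(r)} g(u \otimes v \otimes w),
\end{align*}
since $g$ commutes with $A$ in each tensor factor independently (the factor $A$ acts in the $r$-th slot, and $g$ just applies $g$ in each slot).

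For the edge actions $A^{*(r)}$, the key ingredient is that any $g \in G \subseteq {\rm Aut}(\Gamma)$ preserves the path-length distance function, i.e.\ $\partial(g(x), g(y)) = \partial(x,y)$ for all $x,y \in X$. By $S_3$-symmetry it is enough to treat $A^{*(1)}$. For $x,y,z \in X$,
\begin{align*}
gA^{*(1)}(x \otimes y \otimes z) &= \theta^*_{\partial(y,z)} \, g(x) \otimes g(y) \otimes g(z) \\
&= \theta^*_{\partial(g(y), g(z))} \, g(x) \otimes g(y) \otimes g(z) \\
&= A^{*(1)}\bigl(g(x) \otimes g(y) \otimes g(z)\bigr) = A^{*(1)} g(x \otimes y \otimes z),
\end{align*}
using Definition \ref{def:mapsAAAs} twice and the distance-preserving property in the middle equality.

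There is no genuine obstacle: the statement holds essentially by the design of the actions. The only thing to be careful about is that $\mathbb T$ is generated as an algebra by $\{A_i, A^*_i\}_{i=1}^3$, so that commutation of $g$ with each generator implies commutation of $g$ with every element $B \in \mathbb T$; this is a standard remark about algebra actions and can be dispatched in one line at the end of the argument.
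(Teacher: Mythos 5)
Your proposal is correct and follows the same route as the paper: reduce to commutation with the six generators, then check the node actions via $gA=Ag$ on $V$ together with Lemma \ref{lem:Atensor}, and the edge actions via distance-preservation together with Definition \ref{def:mapsAAAs}. You have merely written out the "routine check" that the paper leaves to the reader.
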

\begin{proof} It suffices to show that  for $r \in \lbrace 1,2,3\rbrace$ the following holds on $V^{\otimes 3}$:
\begin{align*} 
g A^{(r)} = A^{(r)} g, \qquad \qquad g A^{*(r)} = A^{*(r)} g.
\end{align*}
These equations are routinely checked using Lemma \ref{lem:Atensor}  and Definition \ref{def:mapsAAAs}.
\end{proof}

\begin{definition} \label{lem:fix} \rm We define the set
\begin{align*}
{\rm Fix}(G) = \lbrace v \in V^{\otimes 3} \vert g(v)=v \;\forall g \in G \rbrace.
\end{align*}
\end{definition}

\begin{lemma} \label{lem:FixGsm} 
${\rm Fix}(G)$ is a $\mathbb T$-submodule of $V^{\otimes 3}$.
\end{lemma}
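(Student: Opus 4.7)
The plan is to prove two things: that ${\rm Fix}(G)$ is a subspace of $V^{\otimes 3}$, and that it is closed under the action of every element of $\mathbb T$.

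First I would verify the subspace property. Given $u,v \in {\rm Fix}(G)$ and scalars $\alpha, \beta \in \mathbb C$, the linearity of the $G$-action on $V^{\otimes 3}$ yields $g(\alpha u + \beta v) = \alpha g(u) + \beta g(v) = \alpha u + \beta v$ for every $g \in G$, so $\alpha u + \beta v \in {\rm Fix}(G)$. Also, $g(0)=0$, so $0 \in {\rm Fix}(G)$, confirming that ${\rm Fix}(G)$ is a subspace.

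Next I would show $\mathbb T$-invariance, which is where Proposition \ref{prop:GTcom} does all the work. Let $v \in {\rm Fix}(G)$ and let $B \in \mathbb T$. For any $g \in G$, Proposition \ref{prop:GTcom} gives $gB = Bg$ on $V^{\otimes 3}$, so
\begin{align*}
g(Bv) = (gB)(v) = (Bg)(v) = B(g(v)) = Bv,
\end{align*}
where the last equality uses $v \in {\rm Fix}(G)$. Hence $Bv \in {\rm Fix}(G)$, and ${\rm Fix}(G)$ is a $\mathbb T$-submodule of $V^{\otimes 3}$.

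There is no real obstacle here: the main content of the lemma has already been established in Proposition \ref{prop:GTcom}, which asserts that the $G$-action and the $\mathbb T$-action commute. Given that, the lemma is an immediate consequence, so the proof is essentially a one-line verification.
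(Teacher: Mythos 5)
Your proof is correct and follows exactly the paper's argument: first the subspace check via linearity of the $G$-action, then $\mathbb T$-invariance via the commutation $gB=Bg$ from Proposition \ref{prop:GTcom}, computing $g(Bv)=Bg(v)=Bv$. Nothing to add.
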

\begin{proof} We first check that ${\rm Fix}(G)$ is a subspace of the vector space $V^{\otimes 3}$. This holds by
Definition \ref{lem:fix}
and since each element of $G$ acts on $V^{\otimes 3}$ in $\mathbb C$-linear fashion.
Next, we check that ${\rm Fix}(G)$ is invariant under $\mathbb T$. For $B \in \mathbb T$ and $v \in {\rm Fix}(G)$
we show that $Bv \in {\rm Fix}(G)$. Let $g \in G$. Using $g(v)=v$ and Proposition \ref{prop:GTcom}, we obtain
\begin{align*}
g(Bv) = gB(v) = Bg(v) = Bv.
\end{align*}
Therefore $Bv \in {\rm Fix}(G)$. We have shown that ${\rm Fix}(G)$ is invariant under $\mathbb T$. The result follows.
\end{proof}

\noindent Next, we describe how ${\rm Fix}(G)$ is related to the fundamental $\mathbb T$-submodule $\Lambda$.

\begin{proposition} \label{prop:LamSub} We have $\Lambda \subseteq {\rm Fix}(G)$.
\end{proposition}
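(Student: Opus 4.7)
The plan is to use the generation statement $\Lambda = \mathbb{T}({\bf 1}^{\otimes 3})$ from Lemma \ref{lem:gen} together with the fact that ${\rm Fix}(G)$ is a $\mathbb{T}$-submodule of $V^{\otimes 3}$ (Lemma \ref{lem:FixGsm}). Since ${\rm Fix}(G)$ is closed under the $\mathbb{T}$-action, it suffices to show that ${\bf 1}^{\otimes 3} \in {\rm Fix}(G)$; then $\mathbb{T}({\bf 1}^{\otimes 3}) \subseteq {\rm Fix}(G)$, i.e., $\Lambda \subseteq {\rm Fix}(G)$.

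First I would verify that ${\bf 1} \in V$ is fixed by every $g \in G$. Because $g$ is a permutation of $X$, applying $g$ to ${\bf 1} = \sum_{x \in X} x$ merely reindexes the sum, yielding $g({\bf 1}) = \sum_{x \in X} g(x) = \sum_{y \in X} y = {\bf 1}$. Next I would lift this to $V^{\otimes 3}$ using the definition of the $G$-action on tensors: for any $g \in G$,
\begin{align*}
g({\bf 1}^{\otimes 3}) = g({\bf 1}) \otimes g({\bf 1}) \otimes g({\bf 1}) = {\bf 1} \otimes {\bf 1} \otimes {\bf 1} = {\bf 1}^{\otimes 3}.
\end{align*}
Hence ${\bf 1}^{\otimes 3} \in {\rm Fix}(G)$ by Definition \ref{lem:fix}.

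Finally, by Lemma \ref{lem:gen} we have $\Lambda = \mathbb{T}({\bf 1}^{\otimes 3})$, and by Lemma \ref{lem:FixGsm} the subspace ${\rm Fix}(G)$ is a $\mathbb{T}$-submodule of $V^{\otimes 3}$. Since ${\rm Fix}(G)$ contains the generator ${\bf 1}^{\otimes 3}$ of the $\mathbb{T}$-module $\Lambda$, it contains all of $\mathbb{T}({\bf 1}^{\otimes 3}) = \Lambda$. There is no real obstacle here; the content of the proposition lies entirely in the preceding lemmas (especially Proposition \ref{prop:GTcom}, which makes ${\rm Fix}(G)$ $\mathbb{T}$-stable), and the present statement is the clean consequence one gets by applying those lemmas to the specific generator ${\bf 1}^{\otimes 3}$.
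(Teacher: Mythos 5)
Your proposal is correct and follows exactly the paper's argument: show that ${\bf 1}^{\otimes 3}$ is fixed by $G$ because each $g$ permutes $X$, then apply Lemma \ref{lem:gen} and Lemma \ref{lem:FixGsm} to get $\Lambda = \mathbb T({\bf 1}^{\otimes 3}) \subseteq {\rm Fix}(G)$. Nothing is missing.
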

\begin{proof} For all $g \in G$ we have
$g({\bf 1}) = {\bf 1}$, because ${\bf 1} = \sum_{x \in X} x$ and $g$ permutes $X$.
We have ${\bf 1}^{\otimes 3} \in {\rm Fix}(G)$, because for all $g \in G$,
\begin{align*}
g({\bf 1}^{\otimes 3}) =g({\bf 1} \otimes  {\bf 1} \otimes  {\bf 1} )= g({\bf 1}) \otimes  g({\bf 1}) \otimes  g({\bf 1}) = {\bf 1} \otimes  {\bf 1} \otimes  {\bf 1} = {\bf 1}^{\otimes 3}.
\end{align*}
By these comments and Lemmas \ref{lem:gen}, \ref{lem:FixGsm} we obtain
\begin{align*}
\Lambda = \mathbb T ({\bf 1}^{\otimes 3})  \subseteq \mathbb T \,{\rm Fix}(G) \subseteq {\rm Fix}(G).
\end{align*}
\end{proof}
\noindent Next, we display an orthogonal basis for ${\rm Fix}(G)$. We will use the following notation. Recall that $V^{\otimes 3}$
has an orthonormal basis $X^{\otimes 3}$. The group $G$ acts on the set $X^{\otimes 3}$.
 
 \begin{definition}\label{def:Gact} \rm Referring to the $G$ action on the set $X^{\otimes 3}$,
 let $\mathcal O$ denote the set of orbits. For each orbit $\Omega \in \mathcal O$ define
 \begin{align*}
 \chi_\Omega = \sum_{x \otimes y \otimes z \in \Omega} x \otimes y \otimes z.
 \end{align*}
 We call $\chi_\Omega$ the {\it characteristic vector} of $\Omega$.
 \end{definition}

 \begin{proposition} \label{prop:orthBasis} The following is an orthogonal basis for the vector space ${\rm Fix}(G)$:
 \begin{align} \label{eq:OrthBasis}
 \chi_\Omega, \qquad \quad \Omega \in \mathcal O.
 \end{align}
 \end{proposition}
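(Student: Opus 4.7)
The plan is to verify three things in sequence: each $\chi_\Omega$ lies in ${\rm Fix}(G)$, the family $\{\chi_\Omega\}_{\Omega \in \mathcal{O}}$ is orthogonal (hence linearly independent), and it spans ${\rm Fix}(G)$.

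First I would check that $\chi_\Omega \in {\rm Fix}(G)$ for every orbit $\Omega \in \mathcal{O}$. Fix $g \in G$. Since $\Omega$ is a $G$-orbit on $X^{\otimes 3}$, the map $g$ restricts to a bijection $\Omega \to \Omega$. Using the formula $g(x \otimes y \otimes z) = g(x) \otimes g(y) \otimes g(z)$ from the definition of the $G$-action on $V^{\otimes 3}$, we get
\begin{align*}
g(\chi_\Omega) = \sum_{t \in \Omega} g(t) = \sum_{t \in \Omega} t = \chi_\Omega.
\end{align*}

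Next I would establish orthogonality. Distinct orbits $\Omega \ne \Omega'$ are disjoint subsets of $X^{\otimes 3}$, and by Lemma \ref{lem:Herm}(i) the basis $X^{\otimes 3}$ is orthonormal with respect to $\langle\,,\,\rangle$. Hence
\begin{align*}
\langle \chi_\Omega, \chi_{\Omega'}\rangle = \sum_{t \in \Omega} \sum_{t' \in \Omega'} \langle t, t'\rangle = 0,
\end{align*}
and $\Vert \chi_\Omega\Vert^2 = |\Omega| \ne 0$. In particular the vectors in \eqref{eq:OrthBasis} are linearly independent.

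Finally, and this is the only substantive step, I would show that the $\chi_\Omega$ span ${\rm Fix}(G)$. Pick $v \in {\rm Fix}(G)$ and expand $v = \sum_{t \in X^{\otimes 3}} v_t\, t$ with $v_t \in \mathbb{C}$. For any $g \in G$, one has on the one hand $g(v)=v$, and on the other
\begin{align*}
g(v) = \sum_{t \in X^{\otimes 3}} v_t\, g(t) = \sum_{t \in X^{\otimes 3}} v_{g^{-1}(t)}\, t.
\end{align*}
Comparing coefficients with respect to the basis $X^{\otimes 3}$ yields $v_{g^{-1}(t)} = v_t$ for all $t \in X^{\otimes 3}$ and all $g \in G$. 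Thus the function $t \mapsto v_t$ is constant on each $G$-orbit: for each $\Omega \in \mathcal{O}$ there is a scalar $c_\Omega \in \mathbb{C}$ with $v_t = c_\Omega$ for all $t \in \Omega$. Regrouping the sum by orbits gives $v = \sum_{\Omega \in \mathcal{O}} c_\Omega\, \chi_\Omega$, as required.

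The only place that could be considered delicate is the spanning step, where the key observation is that a $G$-invariant coefficient sequence on the orthonormal basis $X^{\otimes 3}$ must be constant on orbits. Everything else is immediate from the orbit structure and the orthonormality of $X^{\otimes 3}$.
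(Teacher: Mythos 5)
Your proof is correct and follows the same route as the paper's: membership of each $\chi_\Omega$ in ${\rm Fix}(G)$ by construction, pairwise orthogonality from disjointness of orbits and orthonormality of $X^{\otimes 3}$, linear independence from being nonzero and orthogonal, and spanning by showing a $G$-invariant coefficient vector is constant on orbits. You simply supply the details for the spanning step that the paper dismisses as ``routine to check.''
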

 \begin{proof} By construction, the vectors  \eqref{eq:OrthBasis} are contained in ${\rm Fix}(G)$.
 It is routine to check that the vectors \eqref{eq:OrthBasis} span ${\rm Fix}(G)$.
 The vectors \eqref{eq:OrthBasis} are mutually orthogonal because the $G$-orbits in $\mathcal O$ are mutually disjoint subsets of $X^{\otimes 3}$,
  and because the vectors in $X^{\otimes 3}$  are
 mutually orthogonal.
 The vectors \eqref{eq:OrthBasis} are linearly independent, because they are nonzero and mutually orthogonal. The result follows.
 \end{proof}
 \begin{corollary} The dimension of ${\rm Fix}(G) $ is equal to $\vert \mathcal O \vert$.
 \end{corollary}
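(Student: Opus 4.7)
The plan is to note that this corollary is an immediate consequence of the preceding proposition. That proposition produces an explicit orthogonal basis
\begin{align*}
\{ \chi_\Omega \mid \Omega \in \mathcal{O} \}
\end{align*}
for $\mathrm{Fix}(G)$. I would therefore simply observe that the indexing map $\Omega \mapsto \chi_\Omega$ is a bijection from $\mathcal{O}$ to this basis: it is surjective by construction, and it is injective because distinct orbits are disjoint subsets of $X^{\otimes 3}$, so distinct orbits contribute distinct (indeed, orthogonal and nonzero) characteristic vectors. Since the dimension of a vector space equals the cardinality of any of its bases, we conclude $\dim \mathrm{Fix}(G) = |\mathcal{O}|$.

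There is no real obstacle here; the corollary is essentially a restatement of the cardinality of the basis exhibited in Proposition \ref{prop:orthBasis}. The only thing worth emphasizing, if one wanted to be slightly more careful, is why $|\mathcal{O}|$ is the right count (as opposed to a cardinality of a multiset), and this is handled by the observation that the orbits are pairwise disjoint, so the characteristic vectors are distinct nonzero elements indexed without repetition by $\mathcal{O}$.
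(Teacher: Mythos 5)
Your proof is correct and matches the paper's, which simply cites Proposition \ref{prop:orthBasis} as giving an orthogonal basis of ${\rm Fix}(G)$ indexed by $\mathcal O$. Your extra remark about why the indexing is without repetition is a harmless elaboration of the same argument.
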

 \begin{proof} Immediate from Proposition \ref{prop:orthBasis}.
 \end{proof}

\noindent We showed in Proposition \ref{prop:LamSub} that  $\Lambda \subseteq  {\rm Fix}(G)$. It sometimes happens that $\Lambda = {\rm Fix}(G)$. We will give an
example in the next section.

\section{The Hamming graph $H(D,N)$}

\noindent Recall the $Q$-polynomial distance-regular graph $\Gamma=(X,\mathcal R)$ from Section 5.
 In this section, we assume that $\Gamma$ is a Hamming graph 
 \cite{bbit,
 banIto,
  fermion2,
 bcn,
 delsarte,
 tanakaCode,
 hwh}. Under this assumption, we will describe the fundamental $\mathbb T$-module $\Lambda$ from Definition  \ref{def:fun}.

\begin{example}\label{ex:hypercube} \rm (See  \cite[Chapter~6.4]{bbit}, \cite[Chapter 3.2]{banIto}, \cite[Section 9.2]{bcn}.)
For  integers $D\geq 1$ and $N\geq 3$, the {\it Hamming graph} $H(D,N)$  has vertex set $X$ consisting of the $D$-tuples of elements taken from the set $\lbrace 1,2,\ldots, N\rbrace$. 
Vertices $x,y \in X$ are adjacent  whenever $x,y$ differ in exactly one coordinate. The graph $H(D,N)$ is distance-regular with diameter $D$ and intersection numbers
\begin{align*}
p^i_{1,i-1} &= i, \qquad \qquad (1 \leq i \leq D),\\
p^i_{1,i+1} &= (N-1)(D-i) \qquad \qquad (0 \leq i \leq D-1). 
\end{align*}
The graph $H(D,N)$ is $Q$-polynomial with
\begin{align}
\theta_i = \theta^*_i = D(N-1)-i N \qquad \qquad (0 \leq i \leq D). \label{eq:hamEig}
\end{align}
\end{example}

\noindent Throughout this section, we assume that $\Gamma=H(D,N)$. Note that $\vert X \vert = N^D$. By \cite[Section 9.2]{bcn}, for $x, y \in X$ the distance $\partial(x,y)$ is equal to the number of coordinates at which $x,y$ differ.
The parameters from Lemma \ref{lem:TTR}  are
\begin{align*}
\beta = 2, \qquad \qquad \gamma=\gamma^*=0, \qquad \qquad \varrho = \varrho^*=N^2.
\end{align*}
By \cite[Theorem~2.86]{bbit}, we have $p^h_{i,j} = q^h_{i,j}$ for $0 \leq h,i,j\leq D$.
By \cite[Theorem~9.2.1]{bcn} the automorphism group ${\rm Aut}(\Gamma)$ is isomorphic to the wreath product of the symmetric group $S_N$ and
the symmetric group $S_D$.
The elements of $S_N$ permute the set $\lbrace 1,2,\ldots, N\rbrace$
and the elements of $S_D$ permute the vertex coordinates.
By \cite[p.~207]{banIto} 
the graph $\Gamma$ is distance-transitive in the sense of \cite[p.~189]{banIto}. We take $G={\rm Aut}(\Gamma)$.
\medskip

\noindent Recall that $G$ acts on the set $X^{\otimes 3}$, and $\mathcal O$ is the set of orbits. Our next goal is to describe $\mathcal O$.
It is shown in \cite[Propositions~1,~2]{tanakaCode} that $\vert \mathcal O \vert = \binom{D+4}{4}$. We will give a short proof, for the sake of completeness and to set up some notation. 
We introduce a set $\mathcal P$ with cardinality $\binom{D+4}{4}$, and display a bijection
$\mathcal O \to \mathcal P$.

\begin{definition}\label{def:profile} \rm Let $\mathcal P$ denote the set of sequences $(d_1, d_2, d_3, d_4, d_5)$ of natural numbers such that $\sum_{i=1}^5 d_i=D$.
Elements of $\mathcal P$ are called {\it profiles}.
\end{definition}

\begin{lemma} \label{lem:Psize} We have
\begin{align*}
\vert \mathcal P \vert = \binom{D+4}{4}.
\end{align*}
\end{lemma}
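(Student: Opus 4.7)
The plan is to recognize this as a standard stars-and-bars enumeration. By Definition \ref{def:profile}, a profile is an ordered tuple $(d_1, d_2, d_3, d_4, d_5) \in \mathbb N^5$ whose entries sum to $D$, so I need to count the weak compositions of $D$ into exactly $5$ parts.

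The cleanest way to prove this is via the bijection between such compositions and binary strings with $D$ zeros and $4$ ones. Given a profile $(d_1, \ldots, d_5)$, form the string consisting of $d_1$ zeros, a one, $d_2$ zeros, a one, $\ldots$, a one, $d_5$ zeros. This is a sequence of length $D+4$ in which exactly $4$ positions hold ones, and the construction is clearly reversible: given any such binary string, the $4$ ones partition the remaining $D$ zeros into $5$ blocks (some possibly empty), whose sizes recover the profile. The number of length-$(D+4)$ binary strings with exactly $4$ ones is $\binom{D+4}{4}$, which gives the claim.

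I do not expect any real obstacle; the only thing to be careful about is that $\mathbb N$ includes $0$, so empty blocks are allowed, which is exactly what makes the standard formula $\binom{D+k-1}{k-1}$ apply with $k=5$.
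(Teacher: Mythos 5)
Your proof is correct: the stars-and-bars bijection between weak compositions of $D$ into $5$ parts and binary strings with $D$ zeros and $4$ ones is exactly the standard argument, and you correctly note that allowing $d_i=0$ is what makes the count $\binom{D+4}{4}$. The paper simply leaves this as an exercise, so your write-up supplies precisely the intended routine verification.
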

\begin{proof} Exercise.
\end{proof}

\begin{definition} \label{lem:orb3} \rm We define a function $f: X^{\otimes 3} \to {\mathcal P}$ as follows.
 Pick $x,y,z \in X$ and write
\begin{align*}
x = (x_1, x_2, \ldots, x_D), \qquad \quad 
y = (y_1, y_2, \ldots, y_D), \qquad \quad
z= (z_1, z_2, \ldots, z_D).
\end{align*}
The function $f$ sends 
\begin{align*}
x \otimes y \otimes z \mapsto 
(d_1, d_2, d_3, d_4, d_5),
\end{align*}
where
\begin{align*}
d_1 &= \bigl \vert \lbrace i \vert 1 \leq i \leq D, \; x_i=y_i=z_i \rbrace \bigr \vert, \\
d_2 &= \bigl \vert \lbrace i \vert 1 \leq i \leq D, \; x_i \not=y_i =z_i \rbrace \bigr \vert , \\
d_3&= \bigl \vert \lbrace i \vert 1 \leq i \leq D, \;  y_i \not=z_i=x_i \rbrace \bigr \vert, \\
d_4 &= \bigl \vert \lbrace i \vert 1 \leq i \leq D, \; z_i \not=x_i =y_i \rbrace \big\vert, \\
d_5 &= \bigl \vert \lbrace i \vert 1 \leq i \leq D, \; x_i\not=y_i\not=z_i \not=x_i \rbrace \bigr \vert.
\end{align*}
We call $(d_1, d_2, d_3, d_4, d_5)$  the {\it profile of $x \otimes y \otimes z$}.
\end{definition}

\begin{lemma} For a profile $(d_1, d_2, d_3, d_4, d_5) \in \mathcal P$, the number of vectors in $X^{\otimes 3}$ with this profile is equal to
\begin{align} \label{eq:preimage}
\frac{D!}{d_1! d_2!d_3!d_4!d_5!} N^D (N-1)^{D-d_1}(N-2)^{d_5}.
\end{align}
\end{lemma}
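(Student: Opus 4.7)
The plan is to count the triples $x\otimes y \otimes z \in X^{\otimes 3}$ with a fixed profile $(d_1,d_2,d_3,d_4,d_5)$ by first choosing how the $D$ coordinates are partitioned among the five types described in Definition \ref{lem:orb3}, and then independently choosing the values of $(x_i,y_i,z_i)$ at each coordinate.

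First I would use the multinomial coefficient: the number of ways to assign the $D$ coordinate positions $\{1,2,\ldots,D\}$ to the five categories (all-equal, $x\neq y=z$, $y\neq z=x$, $z\neq x=y$, all-distinct) with prescribed sizes $d_1,d_2,d_3,d_4,d_5$ is
\begin{equation*}
\binom{D}{d_1,d_2,d_3,d_4,d_5} = \frac{D!}{d_1!\,d_2!\,d_3!\,d_4!\,d_5!}.
\end{equation*}
Since $d_1+d_2+d_3+d_4+d_5=D$ by Definition \ref{def:profile}, this partition is well-defined.

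Next I would count, for each fixed partition of coordinates, the number of ways to choose the triples $(x_i,y_i,z_i)$ independently at each coordinate $i$. At a coordinate of type $1$ we pick a single common value, giving $N$ choices. At a coordinate of type $2$, $3$, or $4$ we pick the common value (in $N$ ways) and then the distinct value (in $N-1$ ways), giving $N(N-1)$ choices. At a coordinate of type $5$ we must pick three distinct values in order, giving $N(N-1)(N-2)$ choices. Multiplying over the $D$ coordinates, the number of fillings is
\begin{equation*}
N^{d_1}\bigl(N(N-1)\bigr)^{d_2+d_3+d_4}\bigl(N(N-1)(N-2)\bigr)^{d_5}
= N^{D}(N-1)^{D-d_1}(N-2)^{d_5},
\end{equation*}
where the last equality uses $d_1+d_2+d_3+d_4+d_5=D$ to collect the powers of $N$ and $d_2+d_3+d_4+d_5 = D-d_1$ to collect the powers of $N-1$.

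Finally I would multiply the two counts to obtain \eqref{eq:preimage}. The argument is essentially a routine multinomial bookkeeping exercise; the only mild obstacle is making sure that the ``distinct'' conditions in Definition \ref{lem:orb3} (particularly the type-$5$ condition $x_i\neq y_i\neq z_i\neq x_i$) are treated correctly so that the count of fillings at each coordinate is independent of the count at every other coordinate. Once one checks that the five types of coordinate patterns are pairwise disjoint and jointly exhaust the possibilities, the independence is immediate and the formula follows.
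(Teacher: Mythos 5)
Your proof is correct and is exactly the combinatorial counting argument the paper invokes (the paper's proof is simply ``By combinatorial counting''); you have just written out the multinomial coefficient and the per-coordinate value counts explicitly, and both factors check out.
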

\begin{proof} By combinatorial counting.
\end{proof}

\begin{corollary} \label{cor:surj} The  function $f: X^{\otimes 3} \to {\mathcal P}$ is surjective.
\end{corollary}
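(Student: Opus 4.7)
The plan is to deduce surjectivity directly from the cardinality formula \eqref{eq:preimage} just established. For each profile $(d_1,d_2,d_3,d_4,d_5)\in\mathcal{P}$, the number of preimages $|f^{-1}(d_1,d_2,d_3,d_4,d_5)|$ is given by that formula, so it suffices to verify that this number is strictly positive. Under the standing assumption $N\geq 3$, each factor is positive: the multinomial coefficient $\tbinom{D}{d_1,d_2,d_3,d_4,d_5}=\tfrac{D!}{d_1!d_2!d_3!d_4!d_5!}$ is a positive integer, $N^D\geq 1$, $(N-1)^{D-d_1}\geq 1$, and $(N-2)^{d_5}\geq 1$ (using the convention $0^0=1$ when $d_5=0$, and otherwise $N-2\geq 1$ since $N\geq 3$). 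Hence $|f^{-1}(d_1,d_2,d_3,d_4,d_5)|\geq 1$ for every profile, proving surjectivity.

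Alternatively, and perhaps more transparently if one wishes to avoid invoking the previous lemma, I would exhibit an explicit preimage. Given $(d_1,\ldots,d_5)\in\mathcal{P}$, partition the coordinate index set $\{1,\ldots,D\}$ into five consecutive blocks $I_1,\ldots,I_5$ with $|I_k|=d_k$. Then define $x,y,z\in X$ coordinate by coordinate as follows: on $I_1$ set $x_i=y_i=z_i=1$; on $I_2$ set $y_i=z_i=1$ and $x_i=2$; on $I_3$ set $x_i=z_i=1$ and $y_i=2$; on $I_4$ set $x_i=y_i=1$ and $z_i=2$; on $I_5$ set $x_i=1$, $y_i=2$, $z_i=3$, which is legal because $N\geq 3$. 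Checking against Definition \ref{lem:orb3}, the triple $x\otimes y\otimes z$ has profile precisely $(d_1,d_2,d_3,d_4,d_5)$.

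The only mild point to watch is the use of $N\geq 3$, which is needed exclusively on the $I_5$ block (or equivalently to keep the factor $(N-2)^{d_5}$ nonzero); this hypothesis is in force throughout the section. Either route above is routine, so no real obstacle arises; I would favor the second approach in the write-up since it immediately yields a canonical representative in each fiber, which will likely be convenient for the orbit-by-orbit analysis of $\mathrm{Fix}(G)$ and $\Lambda$ to follow.
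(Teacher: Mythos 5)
Your first argument is exactly the paper's proof: the count in \eqref{eq:preimage} is positive for every profile (using $N\geq 3$), so $f$ is surjective. The explicit-preimage construction in your second paragraph is a nice self-contained alternative, but the primary route matches the paper's one-line justification.
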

\begin{proof} The numbers \eqref{eq:preimage} are all nonzero.
\end{proof}

\noindent The following result is a variation on \cite[Proposition~2]{tanakaCode}.
\begin{lemma} \label{lem:orb} A pair of vectors in $X^{\otimes 3}$ are in the same $G$-orbit if and only if they have the same profile. 
\end{lemma}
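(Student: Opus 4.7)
The plan is to exploit the description of $G = \mathrm{Aut}(\Gamma) \cong S_N \wr S_D$, which acts on $X = \{1,\ldots,N\}^D$ by (i) permuting the $D$ coordinates and (ii) applying an independent alphabet permutation in $S_N$ to each coordinate, with the induced diagonal action on $X^{\otimes 3}$ given by $g(x\otimes y\otimes z)=g(x)\otimes g(y)\otimes g(z)$.

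For the forward direction, I would verify that the profile is a $G$-invariant. For any $g \in G$ coming from a pair $(\sigma;\pi_1,\ldots,\pi_D) \in S_D \ltimes S_N^D$, the $i$-th coordinate of $g(x),g(y),g(z)$ is $\pi_{\sigma^{-1}(i)}$ applied to $x_{\sigma^{-1}(i)}, y_{\sigma^{-1}(i)}, z_{\sigma^{-1}(i)}$ respectively. Since the same alphabet permutation acts on all three entries at a given coordinate, each of the five equality patterns in Definition \ref{lem:orb3} is preserved at that coordinate, and the coordinate index is merely relabeled by $\sigma$. Thus the cardinalities $d_1,\ldots,d_5$ are unchanged.

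For the reverse direction, suppose $x\otimes y\otimes z$ and $x'\otimes y'\otimes z'$ both have profile $(d_1,d_2,d_3,d_4,d_5)$. Partition $\{1,\ldots,D\}$ into five (possibly empty) blocks $B_1,\ldots,B_5$ according to which of the five equality patterns holds for $(x_i,y_i,z_i)$, and similarly form blocks $B'_1,\ldots,B'_5$ for $(x'_i,y'_i,z'_i)$; by hypothesis $|B_k|=|B'_k|=d_k$. Pick any $\sigma \in S_D$ that sends each $B_k$ bijectively to $B'_k$. For each coordinate $i$, the multiset $\{x_{\sigma^{-1}(i)},y_{\sigma^{-1}(i)},z_{\sigma^{-1}(i)}\}$ and $\{x'_i,y'_i,z'_i\}$ realize the \emph{same} equality pattern, so there is a bijection of the underlying (1-, 2-, or 3-element) subset of $\{1,\ldots,N\}$ to itself that matches entries position by position; extend arbitrarily to $\pi_i \in S_N$. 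The wreath-product element $g=(\sigma;\pi_1,\ldots,\pi_D)$ then sends $x\otimes y\otimes z$ to $x'\otimes y'\otimes z'$.

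The only potential obstacle is the case $d_5>0$, where at such a coordinate the three entries are pairwise distinct and we need room to realize a bijection between two 3-element subsets of the alphabet; this is exactly why the hypothesis $N\geq 3$ (in effect throughout the section) is needed, and once it is in hand the construction of $\pi_i$ is automatic. Combined with Corollary \ref{cor:surj}, this also shows $|\mathcal O|=|\mathcal P|=\binom{D+4}{4}$, recovering the count from \cite{tanakaCode}.
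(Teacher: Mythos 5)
Your proof is correct and is precisely the verification that the paper dismisses as ``routinely checked'': profile invariance under the wreath product $S_N\wr S_D$ in one direction, and in the other the explicit construction of a coordinate permutation matching the pattern blocks together with per-coordinate alphabet permutations (extendable since $N\geq 3$ handles the $d_5$ pattern). Apart from a harmless indexing slip in naming the alphabet permutation $\pi_i$ versus $\pi_{\sigma^{-1}(i)}$, nothing is missing.
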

\begin{proof} This is routinely checked.
\end{proof}

\begin{definition}\label{eq:fadj} \rm We define a function $F: \mathcal O \to \mathcal P$ as follows.
For $\Omega \in \mathcal O$ define $F(\Omega) = f(x \otimes y \otimes z)$, where $x \otimes y \otimes z$
is any vector in $\Omega$. 
We call $F(\Omega)$ the {\it profile of $\Omega$}.
\end{definition}

\begin{proposition} \label{prop:Fbij} The function  $F:\mathcal O \to \mathcal P$ is a bijection. Moreover,
\begin{align}\label{eq:Osize}
\vert \mathcal O\vert = \binom{D+4}{4}.
\end{align}
\end{proposition}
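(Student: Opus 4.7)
The plan is to deduce the proposition directly from the three results immediately preceding it: Lemma \ref{lem:Psize}, Corollary \ref{cor:surj}, and Lemma \ref{lem:orb}. There is essentially no new content; the task is to assemble these pieces.

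First, I would verify that the map $F:\mathcal O \to \mathcal P$ is well defined. By Lemma \ref{lem:orb}, any two elements of $X^{\otimes 3}$ lying in the same $G$-orbit have the same profile, so the value $f(x\otimes y\otimes z)$ is independent of the chosen representative $x \otimes y \otimes z \in \Omega$. Thus $F$ is well defined in the sense of Definition \ref{eq:fadj}.

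Next, I would show $F$ is a bijection by checking injectivity and surjectivity separately. For injectivity, suppose $F(\Omega_1)=F(\Omega_2)$ for $\Omega_1,\Omega_2 \in \mathcal O$. Picking representatives $u_1 \in \Omega_1$ and $u_2 \in \Omega_2$, we have $f(u_1)=f(u_2)$, so by the other direction of Lemma \ref{lem:orb} the vectors $u_1,u_2$ lie in the same $G$-orbit, forcing $\Omega_1=\Omega_2$. For surjectivity, pick any $p \in \mathcal P$. By Corollary \ref{cor:surj}, there exists $u \in X^{\otimes 3}$ with $f(u)=p$. Let $\Omega$ be the $G$-orbit of $u$; then $F(\Omega)=p$ by construction.

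Finally, the cardinality claim \eqref{eq:Osize} follows at once, since $|\mathcal O|=|\mathcal P|$ by the bijection and $|\mathcal P|=\binom{D+4}{4}$ by Lemma \ref{lem:Psize}. There is no real obstacle here; the whole content of the proposition has been encoded into the preparatory lemmas, so the main step is simply to apply Lemma \ref{lem:orb} as an ``if and only if'' to get the bijection.
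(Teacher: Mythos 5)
Your proposal is correct and follows exactly the same route as the paper: the paper's proof simply cites Corollary \ref{cor:surj} and Lemma \ref{lem:orb} for the bijection and Lemma \ref{lem:Psize} for the count, and you have just spelled out the well-definedness, injectivity, and surjectivity checks in more detail.
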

\begin{proof} The function $F$ is a bijection by Corollary \ref{cor:surj} and Lemma \ref{lem:orb}. The equation \eqref{eq:Osize}
is from Lemma \ref{lem:Psize}.
\end{proof}

\noindent We have a comment.
\begin{lemma}\label{lem:dsum}
A vector $x\otimes y \otimes z$ in $X^{\otimes 3}$ with profile $(d_1,d_2,d_3,d_4,d_5)$ satisfies
\begin{align*}
\partial(x,y) &= d_2+d_3+d_5, \\
\partial(y,z) &= d_3+d_4+d_5,\\
\partial(z,x) &= d_4+d_2+d_5.
\end{align*}
\end{lemma}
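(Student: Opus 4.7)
The plan is a routine case analysis based on the definitions. Recall that in the Hamming graph $H(D,N)$, the distance $\partial(u,v)$ between two vertices equals the number of coordinates at which they disagree. So to establish the three equalities, I would fix a coordinate index $i \in \{1,\ldots,D\}$ and classify it according to which of the five types listed in Definition~\ref{lem:orb3} applies (noting these five cases are exhaustive and mutually exclusive, exactly as used implicitly in the proof that the function $f$ is well defined). Then for each of the three pairs $(x,y)$, $(y,z)$, $(z,x)$, I would determine, case by case, which types contribute to the disagreement.

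Concretely, for $\partial(x,y)$: by definition $x_i \ne y_i$ fails when the coordinate is of type $d_1$ (all equal) and of type $d_4$ (where $x_i = y_i$ but $z_i$ differs), while $x_i \ne y_i$ holds in the remaining three types $d_2$, $d_3$, $d_5$. Summing the contributing cardinalities gives $\partial(x,y) = d_2 + d_3 + d_5$. The other two equalities follow by the same argument after cyclically relabeling the roles of $x,y,z$; the cyclic symmetry of the statement (which swaps $d_2 \to d_3 \to d_4 \to d_2$ while fixing $d_1$ and $d_5$) matches the cyclic permutation of the five types under cycling $x \to y \to z \to x$, so it suffices to verify one case and transport the result by symmetry.

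There is no real obstacle here; the whole proof is bookkeeping on the five-case partition of $\{1,\ldots,D\}$. I would simply present the case table for $\partial(x,y)$ explicitly and then remark that $\partial(y,z)$ and $\partial(z,x)$ follow by the cyclic symmetry of Definition~\ref{lem:orb3}.
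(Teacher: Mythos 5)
Your proof is correct and takes essentially the same approach as the paper, which likewise reduces the claim to the fact that distance in $H(D,N)$ counts differing coordinates and then reads the result off from Definition \ref{lem:orb3}. Your explicit five-case table and the cyclic-symmetry remark merely make the paper's implicit bookkeeping visible.
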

\begin{proof} The distance between two given vertices is equal to the number of coordinates at which they differ. The result follows in view of  Definition  \ref{lem:orb3}.
\end{proof}

\noindent Our next general goal is to show that $\Lambda={\rm Fix}(G)$.  To this end, we introduce some notation.
\begin{definition} \label{def:xiP} \rm For each profile $(d_1, d_2, d_3, d_4, d_5) \in \mathcal P$ let
$\chi (d_1, d_2, d_3, d_4, d_5 )$ denote the characteristic vector of the corresponding orbit in $\mathcal O$. For notational convenience, define $\chi ( d_1, d_2, d_3, d_4, d_5)=0$ for
any sequence $(d_1, d_2, d_3, d_4, d_5)$ that is not in $\mathcal P$.
\end{definition}

\begin{lemma} \label{lem:Aaction} For a profile $(d_1, d_2, d_3, d_4, d_5) \in \mathcal P$, the following {\rm (i)--(iii)} hold.
\begin{enumerate}
\item[\rm (i)] The vector
\begin{align*}
A^{(1)} \chi (d_1, d_2, d_3, d_4, d_5)
\end{align*}
 is a linear combination with the following terms and coefficients:
\begin{align*} 
\begin{tabular}[t]{c|c}
{\rm Term }& {\rm Coefficient} 
 \\
 \hline
   $  \chi (d_1+1, d_2-1, d_3, d_4, d_5 )$& $(d_1+1) (N-1)$   \\
 $ \chi (d_1-1, d_2+1, d_3, d_4, d_5 )  $   & $d_2+1 $\\
  $ \chi (d_1, d_2, d_3+1, d_4-1, d_5) $& $d_3+1$\\
   $ \chi (d_1, d_2, d_3-1, d_4+1, d_5 )$  & $d_4+1 $ \\
 $ \chi (d_1, d_2, d_3+1, d_4, d_5-1 )$& $(d_3+1) (N-2) $\\
  $  \chi (d_1, d_2, d_3-1, d_4, d_5+1 )$ &$ d_5+1$  \\
   $  \chi(d_1, d_2, d_3, d_4+1, d_5-1)$  & $(d_4+1)(N-2)$ \\
   $  \chi (d_1, d_2, d_3, d_4-1, d_5+1) $ & $d_5+1 $\\
 $        \chi ( d_1, d_2, d_3, d_4, d_5  )$& $d_2 (N-2)+ d_5(N-3) $
   \end{tabular}
\end{align*}
\item[\rm (ii)] 
the vector
\begin{align*}
A^{(2)} \chi (d_1, d_2, d_3, d_4, d_5)
\end{align*}
 is a linear combination with the following terms and coefficients:
\begin{align*} 
\begin{tabular}[t]{c|c}
{\rm Term }& {\rm Coefficient} 
 \\
 \hline
   $  \chi (d_1+1, d_2, d_3-1, d_4, d_5 )$& $(d_1+1) (N-1)$   \\
 $ \chi (d_1-1, d_2,d_3+1, d_4,  d_5 )  $   & $d_3+1 $\\
  $ \chi (d_1, d_2-1, d_3, d_4+1, d_5) $& $d_4+1$\\
   $ \chi (d_1, d_2+1,d_3, d_4-1, d_5 )$  & $d_2+1 $ \\
 $ \chi (d_1, d_2, d_3, d_4+1,  d_5-1 )$& $(d_4+1) (N-2) $\\
   $  \chi (d_1, d_2, d_3, d_4-1,  d_5+1 )$ &$ d_5+1$  \\
   $  \chi(d_1, d_2+1,d_3, d_4, d_5-1)$  & $(d_2+1)(N-2)$ \\
   $  \chi (d_1, d_2-1, d_3, d_4,  d_5+1) $ & $d_5+1 $\\
 $        \chi ( d_1, d_2, d_3, d_4, d_5  )$& $d_3 (N-2)+ d_5(N-3) $
   \end{tabular}
\end{align*}
\newpage
\item[\rm (iii)] 
the vector
\begin{align*}
A^{(3)} \chi (d_1, d_2, d_3, d_4, d_5)
\end{align*}
 is a linear combination with the following terms and coefficients:
\begin{align*} 
\begin{tabular}[t]{c|c}
{\rm Term }& {\rm Coefficient} 
 \\
 \hline
   $  \chi (d_1+1, d_2, d_3, d_4-1,  d_5 )$& $(d_1+1) (N-1)$   \\
 $ \chi (d_1-1, d_2, d_3,d_4+1,   d_5 )  $   & $d_4+1 $\\
  $ \chi (d_1, d_2+1, d_3-1, d_4,  d_5) $& $d_2+1$\\
   $ \chi (d_1, d_2-1, d_3+1,d_4,  d_5 )$  & $d_3+1 $ \\
 $ \chi (d_1, d_2+1,d_3, d_4,   d_5-1 )$& $(d_2+1) (N-2) $\\
    $  \chi (d_1, d_2-1, d_3, d_4,   d_5+1 )$ &$ d_5+1$  \\
   $  \chi(d_1, d_2, d_3+1,d_4,  d_5-1)$  & $(d_3+1)(N-2)$ \\
   $  \chi (d_1, d_2, d_3-1, d_4,  d_5+1) $ & $d_5+1 $\\
 $        \chi ( d_1, d_2, d_3, d_4, d_5  )$& $d_4 (N-2)+ d_5(N-3) $
   \end{tabular}
\end{align*}
\end{enumerate}
\end{lemma}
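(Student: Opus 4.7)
My plan is to compute $A^{(r)}\chi(d_1,d_2,d_3,d_4,d_5)$ by a direct case analysis on how the profile of a triple changes when one of its entries is replaced by an adjacent vertex. For part (i), the coefficient of $\chi(d_1',\ldots,d_5')$ in $A^{(1)}\chi(d_1,\ldots,d_5)$ will be computed by a reverse-counting argument: fix any representative $(\xi^*,y^*,z^*)$ of profile $(d_1',\ldots,d_5')$; then the desired coefficient equals the number of $x \in X$ with $x$ adjacent to $\xi^*$ such that $(x,y^*,z^*)$ has profile $(d_1,\ldots,d_5)$. This count is independent of the choice of representative, by the transitivity of $G = {\rm Aut}(\Gamma)$ on each orbit together with Proposition \ref{prop:Fbij}.

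In $H(D,N)$, the relation $x \sim \xi^*$ means $x$ and $\xi^*$ differ in exactly one coordinate $i$; at every other coordinate the local type (in the sense of Definition \ref{lem:orb3}) is the same for $(x,y^*,z^*)$ and $(\xi^*,y^*,z^*)$. I will therefore classify contributions by the pair $(t',t)$, where $t'$ is the type at position $i$ in $(\xi^*,y^*,z^*)$ and $t$ is the type at position $i$ in $(x,y^*,z^*)$. Enumerating the substitutions $x_i \neq \xi^*_i$ gives: $t'=1$ produces $t=2$ with $N-1$ choices; $t'=2$ produces $t=1$ with one choice or $t=2$ with $N-2$ choices; $t'=3$ produces $t=4$ with one choice or $t=5$ with $N-2$ choices; symmetrically for $t'=4$; and $t'=5$ produces $t=4$ with one, $t=3$ with one, or $t=5$ with $N-3$ choices. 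Each transition $t' \to t$ with $t \neq t'$ yields a distinct output profile, obtained from $(d_1,\ldots,d_5)$ by raising the $t'$-entry and lowering the $t$-entry by $1$; multiplying the choice-count by the number of type-$t'$ positions in the output profile produces each of the nine rows of (i). The two transitions $t'=t=2$ and $t'=t=5$ both leave the profile unchanged, and combine to the diagonal coefficient $d_2(N-2)+d_5(N-3)$.

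For parts (ii) and (iii) the same reverse-counting argument applies, with $y$ or $z$ in place of $x$. Since type 2 distinguishes the case $x_i \neq y_i = z_i$, the role played by type 2 for $A^{(1)}$ is taken up by type 3 (the case $y_i \neq z_i = x_i$) for $A^{(2)}$ and by type 4 (the case $z_i \neq x_i = y_i$) for $A^{(3)}$. The resulting $5 \times 5$ transition tables are related to that of $A^{(1)}$ by a permutation of the types $\{2,3,4\}$, and produce the tables displayed in (ii) and (iii). I expect the main obstacle to be purely bookkeeping: one must carefully track the correspondence between coordinate-type transitions at position $i$ and the shifts of profile indices, keeping the asymmetric convention of Definition \ref{lem:orb3} consistent between source and target profiles. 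Once this is set up, all nine coefficients in each of (i)--(iii) follow by direct counting.
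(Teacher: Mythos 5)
Your reverse-counting argument is correct, and it supplies exactly the kind of ``combinatorial counting'' that the paper invokes in a single line as its entire proof. The key well-definedness point (the coefficient of a basis vector $\xi^* \otimes y^* \otimes z^*$ in $A^{(1)}\chi(d_1,\ldots,d_5)$ depends only on the $G$-orbit of $\xi^* \otimes y^* \otimes z^*$, because $\chi(d_1,\ldots,d_5) \in {\rm Fix}(G)$ and $A^{(1)}$ commutes with $G$ by Proposition \ref{prop:GTcom}) is correctly identified, and I verified that every one of the twenty-seven rows across parts (i)--(iii) follows from your transition table and the rule ``coefficient $=$ (number of type-$t'$ positions in the target) $\times$ (number of admissible substitutions),'' with the targets obtained by raising the $t'$-entry and lowering the $t$-entry of the source profile. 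Your use of the $\{2,3,4\}$ type-permutation symmetry to handle $A^{(2)}$ and $A^{(3)}$ from the $A^{(1)}$ computation is a clean way to organize the bookkeeping and matches what the displayed tables require.
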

\begin{proof} By combinatorial counting. \\
\end{proof} 

\begin{lemma}  \label{lem:Asaction} For a profile $(d_1, d_2, d_3, d_4, d_5) \in \mathcal P$, the vector
$\chi (d_1, d_2, d_3, d_4, d_5)$ is a common eigenvector for $A^{*(1)}$, $A^{*(2)}$, $A^{*(3)}$ with
eigenvalues
\begin{align*}
N(d_1+d_2)-D, \qquad \quad N(d_1+d_3)-D, \qquad \quad N(d_1+d_4)-D,
\end{align*}
respectively.
\end{lemma}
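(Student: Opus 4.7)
The plan is to prove this by direct computation, using only the explicit formulas already established for the $A^{*(r)}$-action and for the dual eigenvalues of the Hamming graph.

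First I would observe that $\chi(d_1,d_2,d_3,d_4,d_5)$ is by Definition \ref{def:xiP} and Definition \ref{def:Gact} the sum of those basis vectors $x \otimes y \otimes z \in X^{\otimes 3}$ whose profile, in the sense of Definition \ref{lem:orb3}, equals $(d_1,d_2,d_3,d_4,d_5)$. By Definition \ref{def:mapsAAAs} each such basis vector is a common eigenvector for $A^{*(1)}$, $A^{*(2)}$, $A^{*(3)}$, with eigenvalues $\theta^*_{\partial(y,z)}$, $\theta^*_{\partial(z,x)}$, $\theta^*_{\partial(x,y)}$ respectively. By Lemma \ref{lem:dsum}, the three pairwise distances $\partial(y,z)$, $\partial(z,x)$, $\partial(x,y)$ are determined by the profile, hence take the same values for every term in the sum defining $\chi(d_1,d_2,d_3,d_4,d_5)$. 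This shows that $\chi(d_1,d_2,d_3,d_4,d_5)$ is indeed a common eigenvector.

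It remains to identify the three eigenvalues. Using Lemma \ref{lem:dsum} and the identity $d_1+d_2+d_3+d_4+d_5 = D$ from Definition \ref{def:profile}, we have
\begin{align*}
\partial(y,z) &= d_3+d_4+d_5 = D-d_1-d_2, \\
\partial(z,x) &= d_2+d_4+d_5 = D-d_1-d_3, \\
\partial(x,y) &= d_2+d_3+d_5 = D-d_1-d_4.
\end{align*}
Substituting these into the Hamming-graph dual eigenvalue formula $\theta^*_i = D(N-1)-iN$ from \eqref{eq:hamEig} immediately yields
\begin{align*}
\theta^*_{\partial(y,z)} &= D(N-1)-(D-d_1-d_2)N = N(d_1+d_2)-D, \\
\theta^*_{\partial(z,x)} &= D(N-1)-(D-d_1-d_3)N = N(d_1+d_3)-D, \\
\theta^*_{\partial(x,y)} &= D(N-1)-(D-d_1-d_4)N = N(d_1+d_4)-D,
\end{align*}
as required.

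There is no real obstacle here: once Lemma \ref{lem:dsum} has been invoked to convert profile data into distances, the rest is a one-line algebraic manipulation using $\sum_i d_i = D$ and the explicit dual eigenvalues of $H(D,N)$. The whole argument should fit in a few lines.
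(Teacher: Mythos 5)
Your proof is correct and follows the same route as the paper: apply Definition \ref{def:mapsAAAs} together with Lemma \ref{lem:dsum} to see that every term of $\chi(d_1,\ldots,d_5)$ is a common eigenvector with the same eigenvalues, then substitute into \eqref{eq:hamEig} using $\sum_i d_i = D$. You simply spell out the details more explicitly than the paper does.
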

\begin{proof} First consider $A^{*(1)}$. By Definition \ref{def:mapsAAAs} and Lemma \ref{lem:dsum}, the vector  $\chi (d_1, d_2, d_3, d_4, d_5)$ is an eigenvector for $A^{*(1)}$ with
eigenvalue $\theta^*_i$, where $i = d_3+d_4+d_5$. Using \eqref{eq:hamEig} we obtain
\begin{align*}
\theta^*_i = N(d_1+d_2)-D.
\end{align*}
The matrices $A^{*(2)}$, $A^{*(3)}$ are similarly treated.
\end{proof}

\begin{proposition}\label{prop:LamF} We have $\Lambda={\rm Fix}(G)$.
\end{proposition}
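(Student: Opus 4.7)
By Proposition~\ref{prop:LamSub}, $\Lambda \subseteq {\rm Fix}(G)$. By Proposition~\ref{prop:orthBasis}, ${\rm Fix}(G)$ has basis $\lbrace \chi({\bf d}) : {\bf d} \in \mathcal{P}\rbrace$, so the task reduces to showing $\chi({\bf d}) \in \Lambda$ for every profile ${\bf d} = (d_1,d_2,d_3,d_4,d_5)$. My plan is to induct on $k := D-d_1$. The base case $k=0$ is immediate, since $\chi(D,0,0,0,0)={\bf 1}^{\otimes 3}\in\Lambda$ by Definition~\ref{def:fun}.

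For the inductive step $k\geq 1$ I proceed in two substeps. In \emph{Substep A} I handle profiles with $d_2+d_3+d_4\geq 1$: using the $S_3$-symmetry of $\mathbb T$ from Lemma~\ref{lem:aut} I may assume $d_2\geq 1$, and then apply $A_1$ to $\chi(d_1{+}1,d_2{-}1,d_3,d_4,d_5)$, which is at step $k{-}1$ and thus in $\Lambda$ by the outer induction hypothesis. By Lemma~\ref{lem:Aaction}(i) the result equals $d_2\,\chi({\bf d})$ plus a linear combination of characteristic vectors all having first coordinate $\geq d_1+1$, each at step $\leq k-1$, hence each in $\Lambda$ by induction. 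Solving gives $\chi({\bf d})\in\Lambda$.

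In \emph{Substep B} I handle the unique remaining profile at step $k$, namely ${\bf d}=(D-k,0,0,0,k)$. The obstacle here is that no profile with strictly larger $d_1$ maps onto ${\bf d}$ under any single $A_r$. To bypass this I first use Substep A to place $\chi(D-k,0,1,0,k-1)$ in $\Lambda$; its Substep-A parent $\chi(D-k+1,0,0,0,k-1)$ is the Substep-B output at step $k-1$ and so is available through the outer induction. Then applying $A_1$ to $\chi(D-k,0,1,0,k-1)$ and expanding via Lemma~\ref{lem:Aaction}(i) produces $\chi({\bf d})$ with coefficient $k$, accompanied by four ``easy'' characteristic vectors at step $k$ having $d_2+d_3+d_4\geq 1$ (already in $\Lambda$ by Substep A) together with one ``hard'' term $\chi(D-k-1,1,1,0,k-1)$ at step $k+1$, which is not yet accessible.

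To eliminate the hard term I apply the primitive idempotent $E^{*(3)}_k$ of $A^*_3$ from Lemma~\ref{lem:E3sprop}, which lies in $\mathbb T$ as a polynomial in $A^*_3$. By the eigenvalue formula in Lemma~\ref{lem:Asaction}, $\chi({\bf d})$ has $A^*_3$-eigenvalue $\theta^*_k$ whereas the hard term has $A^*_3$-eigenvalue $\theta^*_{k+1}$, so the projection annihilates the hard term while preserving $\chi({\bf d})$ up to its coefficient $k$. The other surviving terms all belong to Substep A at step $k$ and are thus already in $\Lambda$, so a final rearrangement yields $\chi({\bf d})\in\Lambda$. The main obstacle is precisely this projection step: one must verify by a direct inspection using Lemmas~\ref{lem:Aaction} and~\ref{lem:Asaction} that the hard term indeed has a different $A^*_3$-eigenvalue from the target, and that no other ``step $k+1$'' profile sneaks into the expansion, so that the induction closes without ever recursing into steps $>k$.
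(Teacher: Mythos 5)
Your argument is essentially correct but reaches the conclusion by a genuinely different route from the paper, so let me first flag one slip and then compare. The slip is in your base case: $\chi(D,0,0,0,0)$ is \emph{not} ${\bf 1}^{\otimes 3}$ (the latter is $\sum_{x,y,z}x\otimes y\otimes z$, i.e.\ the sum of the characteristic vectors of \emph{all} orbits); rather $\chi(D,0,0,0,0)=P_{0,0,0}=\sum_{x}x\otimes x\otimes x$, which lies in $\Lambda$ by Lemma \ref{lem:PLam}(ii), so the base case survives with the corrected justification. The rest of your double induction on $k=D-d_1$ checks out: in Substep A the expansion of $A^{(1)}\chi(d_1+1,d_2-1,d_3,d_4,d_5)$ does yield $\chi(d_1,\ldots,d_5)$ with coefficient $d_2\neq 0$ plus terms whose first coordinate is at least $d_1+1$; and in Substep B the unique step-$(k+1)$ term $\chi(D-k-1,1,1,0,k-1)$ has $A^{*(3)}$-eigenvalue $N(D-k-1)-D=\theta^*_{k+1}$, distinct from the target's $\theta^*_k$, so $E^{*(3)}_k$ kills it while fixing $k\,\chi(D-k,0,0,0,k)$, and the four remaining nonzero terms all have $d_3+d_4\geq 1$ and are supplied by Substep A. (Your reduction to $d_2\geq 1$ via Lemma \ref{lem:aut} is informal, since that lemma concerns automorphisms of the abstract algebra rather than of the module; it is cleaner to treat $d_2\geq1$, $d_3\geq1$, $d_4\geq1$ with $A^{(1)}$, $A^{(2)}$, $A^{(3)}$ via parts (i), (ii), (iii) of Lemma \ref{lem:Aaction}.) The paper avoids this explicit traversal of the profile graph altogether: it notes that the nine terms in each table of Lemma \ref{lem:Aaction} lie in pairwise distinct common eigenspaces of $A^{*(1)},A^{*(2)},A^{*(3)}$, so the span $C$ of the confirmed characteristic vectors is invariant under all six generators and hence equals $\Lambda$ by irreducibility; it then expands ${\bf 1}^{\otimes 3}\in\Lambda=C$ as the sum of \emph{all} characteristic vectors and invokes their linear independence to conclude that every profile is confirmed. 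That trick sidesteps your Substep B entirely, whereas your version, though longer and requiring the separate treatment of the profiles $(D-k,0,0,0,k)$, exhibits concretely how each $\chi(d_1,\ldots,d_5)$ is generated from $P_{0,0,0}$.
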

\begin{proof} By Propositions \ref{prop:orthBasis}, \ref{prop:Fbij} and Definition \ref{def:xiP},  the following vectors form an orthogonal basis for ${\rm Fix}(G)$:
\begin{align} \label{eq:FixBasis}
\chi(d_1,d_2,d_3,d_4,d_5), \qquad \qquad (d_1, d_2, d_3, d_4, d_5) \in \mathcal P.
\end{align}
 By Proposition \ref{prop:LamSub}, $\Lambda\subseteq {\rm Fix}(G)$. To show that equality holds, 
 it suffices to show that $\chi(d_1, d_2, d_3, d_4, d_5) \in \Lambda$ for every profile $(d_1, d_2, d_3, d_4, d_5)$.
We say that a  profile $(d_1, d_2, d_3, d_4, d_5)$ is {\it confirmed} whenever  $\chi(d_1, d_2, d_3, d_4, d_5) \in \Lambda$.
We show that every profile is confirmed. The profile $(D,0,0,0,0)$ is confirmed, because $\chi (D,0,0,0,0)=P_{0,0,0}$ by Definitions \ref{lem:orb3}, \ref{def:xiP} along with  \eqref{eq:P000},
and $P_{0,0,0} \in \Lambda$ by Lemma \ref{lem:PLam}. Let $r \in \lbrace 1,2,3\rbrace$.
 Two distinct profiles $(d_1,d_2,d_3,d_4,d_5)$ and $(d'_1,d'_2,d'_3,d'_4,d'_5)$ will be called
{\it $r$-adjacent} whenever $A^{(r)} \chi(d_1,d_2,d_3,d_4,d_5)$ and $\chi(d'_1,d'_2,d'_3,d'_4,d'_5)$ are not orthogonal. This occurs if and only
if  $\chi(d'_1,d'_2,d'_3,d'_4,d'_5)$ is a term in the first eight rows of the $r$th table of Lemma  \ref{lem:Aaction}. If the profile $(d_1,d_2,d_3,d_4,d_5)$ is confirmed and
the profile $(d'_1,d'_2,d'_3,d'_4,d'_5)$ is $r$-adjacent to it, then  $(d'_1,d'_2,d'_3,d'_4,d'_5)$ is confirmed because for each table of
 Lemma  \ref{lem:Aaction} the terms lie in different common eigenspaces for $A^{*(1)}$, $A^{*(2)}$, $A^{*(3)}$. By these comments, the set of vectors
 \begin{align} \label{eq:C}
 \chi(d_1, d_2, d_3, d_4, d_5), \qquad \qquad (d_1, d_2, d_3, d_4,d_5) \;\mbox{\rm a confirmed profile}
 \end{align}
 span a nonzero subspace of $\Lambda$ that is invariant under each of $A^{(1)}$, $A^{(2)}$, $A^{(3)}$. Let us call this subspace $C$.
 By Lemma  \ref{lem:Asaction},
 each vector in \eqref{eq:C} is a common eigenvector for $A^{*(1)}$, $A^{*(2)}$, $A^{*(3)}$. Therefore $C$ is invariant under each of
 $A^{*(1)}$, $A^{*(2)}$, $A^{*(3)}$. By these comments, $C$ is a 
 $\mathbb T$-submodule of $\Lambda$. We have $C=\Lambda$  since the $\mathbb T$-module $\Lambda$ is irreducible.
 Now consider the vector ${\bf 1}^{\otimes 3}$.
 By construction
 \begin{align*}
 {\bf 1}^{\otimes 3} = \sum \chi(d_1,d_2,d_3,d_4,d_5),
 \end{align*}
 where the sum is over all the profiles $(d_1, d_2, d_3, d_4, d_5)$. 
 We have ${\bf 1}^{\otimes 3} \in \Lambda$ by Definition \ref{def:fun}. Therefore ${\bf 1}^{\otimes 3}$ is a linear combination of
 the vectors  \eqref{eq:C}. By these comments and since the vectors \eqref{eq:FixBasis} are linearly independent, we see that every profile is confirmed.
 We have shown that  $\Lambda={\rm Fix}(G)$.
\end{proof}

\begin{corollary}\label{cor:basisLam} The vector space $\Lambda$ has an orthogonal basis
\begin{align*}
\chi(d_1,d_2,d_3,d_4,d_5), \qquad \qquad (d_1,d_2,d_3,d_4,d_5) \in \mathcal P.
\end{align*}
Moreover,
\begin{align*}
{\rm dim} \, \Lambda = \binom{D+4}{4}.
\end{align*}
\end{corollary}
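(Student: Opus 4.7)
The plan is to simply assemble the preceding results, since all the work has already been done. The chain of implications is:
\begin{enumerate}
\item[(a)] By Proposition \ref{prop:LamF}, $\Lambda = \mathrm{Fix}(G)$.
\item[(b)] By Proposition \ref{prop:orthBasis}, $\mathrm{Fix}(G)$ has an orthogonal basis $\{\chi_\Omega\}_{\Omega \in \mathcal O}$.
\item[(c)] By Proposition \ref{prop:Fbij}, the profile map $F: \mathcal O \to \mathcal P$ is a bijection, so by Definition \ref{def:xiP} the basis in (b) may be reindexed by profiles as $\chi(d_1,d_2,d_3,d_4,d_5)$ with $(d_1,d_2,d_3,d_4,d_5) \in \mathcal P$.
\end{enumerate}

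First I would state that combining (a) and (b) gives an orthogonal basis for $\Lambda$ indexed by the $G$-orbits $\Omega \in \mathcal O$. Then invoking the bijection $F$ from (c) allows me to replace the indexing set $\mathcal O$ by $\mathcal P$, and the reindexed basis vectors are precisely the $\chi(d_1,d_2,d_3,d_4,d_5)$ from Definition \ref{def:xiP}. This establishes the first assertion.

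For the dimension claim, I would compute
\begin{equation*}
\dim \Lambda \;=\; \dim \mathrm{Fix}(G) \;=\; |\mathcal O| \;=\; |\mathcal P| \;=\; \binom{D+4}{4},
\end{equation*}
where the first equality is from (a), the second from Proposition \ref{prop:orthBasis} (or its corollary), the third from Proposition \ref{prop:Fbij}, and the last from Lemma \ref{lem:Psize}.

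There is essentially no obstacle here; the heavy lifting was already done in establishing Proposition \ref{prop:LamF} (the equality $\Lambda = \mathrm{Fix}(G)$, proved via the confirmation/adjacency argument using Lemmas \ref{lem:Aaction} and \ref{lem:Asaction}) and in setting up the bijection $F$ (Corollary \ref{cor:surj} and Lemma \ref{lem:orb}). The corollary is a one-line harvest of those results.
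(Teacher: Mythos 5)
Your proposal is correct and matches the paper's proof, which is the one-line "By Proposition \ref{prop:LamF} and the construction" — your steps (b) and (c) simply spell out what "the construction" refers to. Nothing further is needed.
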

\begin{proof} By Proposition \ref{prop:LamF} and the construction.
\end{proof}

\noindent We finish this paper with a very special case. Referring to our Hamming graph $\Gamma=H(D,N)$, we now assume that $D=1$. In this case, $\Gamma$ becomes the complete graph $K_N$.
Setting $D=1$ in \eqref{eq:hamEig}, we obtain
\begin{align*}
\theta_0 = \theta^*_0 = N-1, \qquad \qquad \theta_1 = \theta^*_1 = -1.
\end{align*}
We have $\vert \mathcal P\vert = \binom{5}{4}=5$. The elements of $\mathcal P$ are
\begin{align*}
(1,0,0,0,0), \qquad (0,1,0,0,0), \qquad (0,0,1,0,0), \qquad (0,0,0,1,0), \qquad (0,0,0,0,1).
\end{align*}
Using Definition  \ref{def:Phij} and Lemma \ref{lem:dsum}, we obtain
\begin{align*}
& \chi(1,0,0,0,0) = P_{0,0,0}, \qquad \quad \chi(0,1,0,0,0)= P_{0,1,1}, \qquad \quad
 \chi(0,0,1,0,0) = P_{1,0,1}, \\
 & \chi(0,0,0,1,0)=P_{1,1,0}, \qquad \quad  \chi(0,0,0,0,1)= P_{1,1,1}.
\end{align*}
\noindent By this and Corollary \ref{cor:basisLam},
the following is an orthogonal basis for $\Lambda$:
\begin{align} \label{eq:5}
P_{0,0,0}, \qquad P_{0,1,1}, \qquad P_{1,0,1}, \qquad P_{1,1,0}, \qquad P_{1,1,1}.
\end{align}
Using Lemma \ref{lem:Aaction} we find that
with respect to the basis  \eqref{eq:5}, the matrices representing $A^{(1)}$, $A^{(2)}$, $A^{(3)}$ are
\begin{align*}
&A^{(1)}: \quad  \begin{pmatrix} 0 & N-1 & 0&0&0 \\
                                                  1&N-2 &0&0&0 \\
                                                 0 &0&0&1&N-2 \\
                                                 0 &0&1&0&N-2 \\
                                                 0 &0&1&1& N-3
                         \end{pmatrix}, \\
 &A^{(2)}: \quad  \begin{pmatrix} 0 & 0 & N-1&0&0 \\
                                                  0&0 &0&1&N-2\\
                                                 1 &0&N-2&0&0 \\
                                                 0 &1&0&0&N-2 \\
                                                 0 &1&0&1& N-3
                         \end{pmatrix}, \\          
 &A^{(3)}: \quad  \begin{pmatrix} 0 & 0 & 0&N-1&0 \\
                                                  0&0 &1&0&N-2\\
                                                 0 &1&0&0&N-2 \\
                                                 1 &0&0&N-2&0 \\
                                                 0 &1&1&0& N-3
                         \end{pmatrix}.                                                                                  
\end{align*}
\noindent Using Lemma \ref{lem:Asaction} we find that with respect to the basis \eqref{eq:5}, the matrices representing  $A^{*(1)}$, $A^{*(2)}$, $A^{*(3)}$ are
\begin{align*}
& A^{*(1)}: \quad {\rm diag} \bigl( N-1, N-1, -1,-1,-1), \\
& A^{*(2)}: \quad {\rm diag} \bigl( N-1, -1, N-1,-1,-1), \\
& A^{*(3)}: \quad {\rm diag} \bigl( N-1, -1, -1,N-1,-1).
\end{align*}

\noindent For the sake of completeness, we mention another  basis for $\Lambda$.
By Lemmas \ref{lem:QLam}--\ref{lem:Qzero} and  ${\rm dim}\,\Lambda=5$,  the following is an orthogonal basis for $\Lambda$:
\begin{align} \label{eq:5s}
Q_{0,0,0}, \qquad Q_{0,1,1}, \qquad Q_{1,0,1}, \qquad Q_{1,1,0}, \qquad Q_{1,1,1}.
\end{align}
\noindent 
Define $S\in {\rm End}(\Lambda)$ that sends the basis
\eqref{eq:5} to the basis \eqref{eq:5s}. 
One checks (or see \cite[Section~7.1]{Jaeger2}) that with respect to the basis \eqref{eq:5}, the matrix representing $S$ is
\begin{align*}
S: \quad \frac{1}{N}  \begin{pmatrix} 1 & N-1 & N-1&N-1&(N-1)(N-2) \\
                                                  1&N-1 &-1&-1&2-N\\
                                                 1 &-1&N-1&-1&2-N \\
                                                 1 &-1&-1&N-1&2-N \\
                                                 1 &-1&-1&-1& 2
                         \end{pmatrix}.                                                                                  
\end{align*}
Squaring the above matrix, we obtain  $S^2=I$. Consequently, $S$ sends the basis \eqref{eq:5s} to the basis \eqref{eq:5}.
\noindent  Using our matrix representations we find that on $\Lambda$,
\begin{align*}
S A^{(r)} = A^{*(r)} S, \qquad \qquad SA^{*(r)} = A^{(r)} S \qquad \qquad r \in \lbrace 1,2,3\rbrace.
\end{align*}

\noindent This yields the following results. With respect to the basis \eqref{eq:5s}, the matrices representing  $A^{(1)}$, $A^{(2)}$, $A^{(3)}$ are
\begin{align*}
& A^{(1)}: \quad {\rm diag} \bigl( N-1, N-1, -1,-1,-1), \\
& A^{(2)}: \quad {\rm diag} \bigl( N-1, -1, N-1,-1,-1), \\
& A^{(3)}: \quad {\rm diag} \bigl( N-1, -1, -1,N-1,-1).
\end{align*}

\noindent With respect to the basis \eqref{eq:5s}, the matrices representing $A^{*(1)}$, $A^{*(2)}$, $A^{*(3)}$ are
\begin{align*}
&A^{*(1)}: \quad  \begin{pmatrix} 0 & N-1 & 0&0&0 \\
                                                  1&N-2 &0&0&0 \\
                                                 0 &0&0&1&N-2 \\
                                                 0 &0&1&0&N-2 \\
                                                 0 &0&1&1& N-3
                         \end{pmatrix}, \\
 &A^{*(2)}: \quad  \begin{pmatrix} 0 & 0 & N-1&0&0 \\
                                                  0&0 &0&1&N-2\\
                                                 1 &0&N-2&0&0 \\
                                                 0 &1&0&0&N-2 \\
                                                 0 &1&0&1& N-3
                         \end{pmatrix}, \\          
 &A^{*(3)}: \quad  \begin{pmatrix} 0 & 0 & 0&N-1&0 \\
                                                  0&0 &1&0&N-2\\
                                                 0 &1&0&0&N-2 \\
                                                 1 &0&0&N-2&0 \\
                                                 0 &1&1&0& N-3
                         \end{pmatrix}.                                                                                  
\end{align*}

\section{Directions for future research}
\noindent In this section, we give some conjectures and open problems.
\medskip

\begin{problem}\rm Recall the tridiagonal algebra $T=T(\beta, \gamma, \gamma^*, \varrho, \varrho^*)$ from Definition \ref{def:T}, and
the $S_3$-symmetric tridiagonal algebra $\mathbb T= \mathbb T(\beta, \gamma, \gamma^*, \varrho, \varrho^*)$ from Definition \ref{def:symT}. Let $W$ denote a
finite-dimensional irreducible $\mathbb T$-module.
 In Lemma \ref{lem:TtoSymT},  we used  distinct $r,s \in \lbrace 1,2,3\rbrace$  to obtain an  algebra homomorphism $T \to \mathbb T$. Pulling back the $\mathbb T$-action on $W$ via this homomorphism, we turn the $\mathbb T$-module $W$
into  a $T$-module. We can choose the ordered pair $r,s$ in six ways,  so $W$ becomes a $T$-module in six ways. Investigate how these six $T$-modules are related.
 Specifically, how are the irreducible $T$-submodules of $W$ with respect to one $T$-module structure, related to the irreducible $T$-submodules of $W$ with respect to another $T$-module structure?
\end{problem}

\noindent Next, we have two conjectures. These conjectures refer to the following situation. Consider the $Q$-polynomial distance-regular graph  $\Gamma=(X,\mathcal R)$ from Section 5.  
 Recall the scalars $\beta, \gamma, \gamma^*, \varrho, \varrho^*$ from Lemma \ref{lem:TTR}. Consider the
 standard module $V$ and the vector space $V^{\otimes 3}$.
 In Theorem \ref{thm:main} we turned  $V^{\otimes 3}$ into a module for the algebra $\mathbb T = \mathbb T(\beta, \gamma, \gamma^*, \varrho, \varrho^*)$.
 In Definition \ref{def:fun} we introduced the fundamental  $\mathbb T$-submodule $\Lambda$ of $V^{\otimes 3}$.
 By Lemmas \ref{lem:ABCs}, \ref{lem:PLam}, \ref{lem:Pzero} the following holds for $0 \leq h,i,j\leq D$:
 \begin{align*}
 E_h^{*(1)} E_i^{*(2)} E_j^{*(3)} \Lambda =0 \qquad {\mbox{\rm if and only if}} \qquad p^h_{i,j} =0.
\end{align*}
\begin{conjecture} \label{conj:qhij} For the above $\Lambda$, the following holds  for  $0 \leq h,i,j\leq D$:
  \begin{align*}
 E_h^{(1)} E_i^{(2)} E_j^{(3)} \Lambda =0 \qquad {\mbox{\rm if and only if}} \qquad q^h_{i,j} =0.
\end{align*}
\end{conjecture}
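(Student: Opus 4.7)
The ``if'' direction is immediate. When $q^h_{i,j} \neq 0$, Lemma~\ref{lem:Qzero} gives $Q_{h,i,j} \neq 0$, and Lemma~\ref{lem:QLam} shows $Q_{h,i,j} = |X|\, E_h^{(1)} E_i^{(2)} E_j^{(3)} P_{0,0,0}$; since $P_{0,0,0} \in \Lambda$ by Lemma~\ref{lem:PLam}, the space $E_h^{(1)} E_i^{(2)} E_j^{(3)} \Lambda$ contains the nonzero vector $Q_{h,i,j}$.

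For the ``only if'' direction, assume $q^h_{i,j} = 0$. Abbreviate $\mathcal{E}_{h',i',j'} := E_{h'}^{(1)} E_{i'}^{(2)} E_{j'}^{(3)}$. By Lemma~\ref{lem:Pnorm} the vector $P_{0,0,0}$ is nonzero, and since $\Lambda$ is an irreducible $\mathbb T$-module containing $P_{0,0,0}$ we have $\Lambda = \mathbb T \cdot P_{0,0,0}$. My plan is to prove by induction on the number $\ell$ of $A^*$-factors in a word expression for $B \in \mathbb T$ that $\mathcal{E}_{h,i,j} B\, P_{0,0,0} = 0$. Let $\mathcal{A} \subseteq \mathbb T$ be the commutative subalgebra generated by $A_1, A_2, A_3$. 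Since $\mathcal{A}$ commutes with $\mathcal{E}_{h,i,j}$ by Lemma~\ref{lem:ABC}, the base case $B \in \mathcal{A}$ reduces to $\mathcal{E}_{h,i,j} B\, P_{0,0,0} = B \cdot \mathcal{E}_{h,i,j} P_{0,0,0} = |X|^{-1} B\, Q_{h,i,j} = 0$ since $Q_{h,i,j} = 0$.

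In the inductive step, write $B = B_1 A^*_r B_2$ with $B_1 \in \mathcal{A}$ and $B_2$ of $A^*$-length $\ell - 1$. Commuting $B_1$ past $\mathcal{E}_{h,i,j}$ and inserting $I = \sum_{h',i',j'} \mathcal{E}_{h',i',j'}$ to the right of $A^*_r$ reduces the problem to showing $\mathcal{E}_{h,i,j} A^{*(r)} \mathcal{E}_{h',i',j'} v' = 0$ for $v' = B_2 P_{0,0,0} \in \Lambda$ and each triple $(h',i',j')$. By the inductive hypothesis, $\mathcal{E}_{h',i',j'} v' = 0$ whenever $q^{h'}_{i',j'} = 0$, so only triples with $q^{h'}_{i',j'} \neq 0$ require attention. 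Combining Proposition~\ref{lem:AASB} (so $A^{*(r)}$ commutes with $A^{(r)}$, forcing the $r$-th coordinate of $(h',i',j')$ to equal that of $(h,i,j)$) with Lemma~\ref{prop:EAsE} (giving a tridiagonal restriction $|\cdot-\cdot|\le 1$ in each of the two remaining coordinates) shows that $\mathcal{E}_{h,i,j} A^{*(r)} \mathcal{E}_{h',i',j'} = 0$ on all of $V^{\otimes 3}$ unless $(h',i',j')$ is ``tridiagonally adjacent'' to $(h,i,j)$.

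The crux of the proof is the final step: showing that for each such adjacent triple $(h',i',j')$ with $q^{h'}_{i',j'} \neq 0$ and $q^h_{i,j} = 0$, the operator $\mathcal{E}_{h,i,j} A^{*(r)} \mathcal{E}_{h',i',j'}$ annihilates $\Lambda$ (although not, in general, all of $V^{\otimes 3}$). I expect this to follow from a scaffold-type identity in the sense of \cite[Theorem~3.8]{scaffold}, or from a triple-product calculation analogous to those in \cite[Section~8]{int}, expressing the restriction of $\mathcal{E}_{h,i,j} A^{*(r)} \mathcal{E}_{h',i',j'}$ to $\Lambda$ as a scalar multiple (with the scalar depending on $q^h_{i,j}$) of a simpler operator. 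Such an identity is plausible because $\Lambda$ is generated by the highly symmetric vector $P_{0,0,0}$ and is contained in $\mathrm{Fix}(G)$ for every $G \subseteq \mathrm{Aut}(\Gamma)$ by Proposition~\ref{prop:LamSub}, features that sharply restrict which matrix coefficients on $\Lambda$ can be nonzero. Once this identity is established, the vanishing of $q^h_{i,j}$ immediately closes the induction and yields the conjecture.
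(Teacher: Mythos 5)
This statement is labeled a \emph{conjecture} in the paper, so there is no author's proof to compare against; the task is to assess whether your argument would actually settle it, and it does not.

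First, a labeling issue: you have the ``if'' and ``only if'' directions swapped. What you call the ``if'' direction (deducing nonvanishing of $E_h^{(1)}E_i^{(2)}E_j^{(3)}\Lambda$ from $q^h_{i,j}\neq 0$) is the contrapositive of ``$E_h^{(1)}E_i^{(2)}E_j^{(3)}\Lambda=0$ only if $q^h_{i,j}=0$.'' This part of your argument is correct and indeed immediate from Lemmas~\ref{lem:QLam}, \ref{lem:Qzero}, and \ref{lem:PLam}: when $q^h_{i,j}\neq 0$, the nonzero vector $Q_{h,i,j}=|X|\,E_h^{(1)}E_i^{(2)}E_j^{(3)}P_{0,0,0}$ lies in $E_h^{(1)}E_i^{(2)}E_j^{(3)}\Lambda$. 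This is presumably exactly why the author felt entitled to state the equivalence as a conjecture rather than just the harder implication.

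The direction that is genuinely the substance of the conjecture is $q^h_{i,j}=0 \Rightarrow E_h^{(1)}E_i^{(2)}E_j^{(3)}\Lambda=0$, and your argument for it is incomplete. The framing is reasonable: since $\Lambda=\mathbb T\cdot P_{0,0,0}$, you reduce to showing $\mathcal E_{h,i,j}\,B\,P_{0,0,0}=0$ for words $B$ in the generators, by induction on the number of $A^*$-factors; the base case works because $\mathcal E_{h,i,j}$ commutes with the $A^{(r)}$ and $\mathcal E_{h,i,j}P_{0,0,0}=|X|^{-1}Q_{h,i,j}=0$; and Proposition~\ref{lem:AASB} plus Lemma~\ref{prop:EAsE} correctly restrict the inductive step to triples $(h',i',j')$ ``tridiagonally adjacent'' to $(h,i,j)$ in the two coordinates not fixed by $r$. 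But the final and decisive claim --- that $\mathcal E_{h,i,j}A^{*(r)}\mathcal E_{h',i',j'}$ annihilates $\Lambda$ whenever $q^h_{i,j}=0$ and $(h',i',j')$ is an adjacent triple with $q^{h'}_{i',j'}\neq 0$ --- is not proved. You say you ``expect this to follow from a scaffold-type identity'' and that it is ``plausible,'' but no such identity is written down, and nothing in the cited references (or in the paper) yields it in the generality needed. This is not a minor technicality to be filled in: it is precisely the open content of the conjecture, and the paper's author, who clearly had the rest of this machinery available, nevertheless stated the result as a conjecture. Absent a proof of that operator identity on $\Lambda$, the induction does not close and the conjecture remains open.
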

\begin{conjecture} \label{conj:PQ} For the above $\Lambda$, we list some subspaces along with a conjectured basis:
\begin{align*} 
&\begin{tabular}[t]{c|cccc}
{\rm Subspace}& $E_0^{*(1)}\Lambda$ & $E_0^{*(2)}\Lambda$ & $E_0^{*(3)}\Lambda$ 
 \\
 \hline
 {\rm Conjectured basis} & $\lbrace P_{0,i,i} \rbrace_{i=0}^D$ & $\lbrace P_{i,0,i} \rbrace_{i=0}^D$ & $\lbrace P_{i,i,0} \rbrace_{i=0}^D$
   \end{tabular}
\\
  &\begin{tabular}[t]{c|cccc}
{\rm Subspace}&  $E_0^{(1)}\Lambda$ & $E_0^{(2)}\Lambda$ &$E_0^{(3)}\Lambda$
 \\
 \hline
 {\rm Conjectured basis} & $\lbrace Q_{0,i,i} \rbrace_{i=0}^D$ & $\lbrace Q_{i,0,i} \rbrace_{i=0}^D$ & $\lbrace Q_{i,i,0} \rbrace_{i=0}^D$ 
   \end{tabular}
\end{align*}

\end{conjecture}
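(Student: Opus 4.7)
The plan is to establish the six statements simultaneously by exploiting the $S_3$-symmetry of $\mathbb T$ from Lemma \ref{lem:aut}, which is intertwined with the natural $S_3$-action on $V^{\otimes 3}$ permuting tensor factors. Since ${\bf 1}^{\otimes 3}$ is $S_3$-fixed and $\Lambda = \mathbb T\,{\bf 1}^{\otimes 3}$ by Lemma \ref{lem:gen}, the subspace $\Lambda$ is $S_3$-invariant; the $S_3$-action permutes the six subspaces in the table and permutes the proposed bases accordingly. Thus it suffices to prove two base assertions: $E_0^{*(1)}\Lambda = {\rm span}\{P_{0,i,i}\}_{i=0}^D$ (top table) and $E_0^{(1)}\Lambda = {\rm span}\{Q_{0,i,i}\}_{i=0}^D$ (bottom table).

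The easy half of each base assertion is direct. For the $P$-case, $P_{0,i,i}\in\Lambda$ by Lemma \ref{lem:PLam}, and every term of $P_{0,i,i}$ has $\partial(y,z)=0$ so $A^{*(1)}P_{0,i,i}=\theta^*_0 P_{0,i,i}$, giving $E_0^{*(1)}P_{0,i,i}=P_{0,i,i}$; linear independence follows from Lemma \ref{lem:Pnorm} using $\|P_{0,i,i}\|^2 = |X|k_0 p^0_{i,i} = |X|k_i\neq 0$. The $Q$-case is analogous via Lemmas \ref{lem:QLam} and \ref{lem:Qnorm}, using $q^0_{i,i}=m_i\neq 0$.

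For the reverse inclusion in the $P$-case, I would identify $E_0^{*(1)}V^{\otimes 3}$ with $V\otimes V$ via $x\otimes y\otimes y\leftrightarrow x\otimes y$. Under this identification, $E_0^{*(1)}({\bf 1}^{\otimes 3})$ corresponds to ${\bf 1}\otimes{\bf 1}$, $P_{0,i,i}$ corresponds to $\tilde P_i := \sum_{\partial(x,y)=i}x\otimes y$, and the four generators commuting with $E_0^{*(1)}$ — namely $A^{(1)}$, $A^{*(1)}$, $A^{*(2)}$, $A^{*(3)}$ — act as $A\otimes I$, the scalar $\theta^*_0 I$, and (both $A^{*(2)}$ and $A^{*(3)}$) as $\tilde A^*: x\otimes y\mapsto \theta^*_{\partial(x,y)}x\otimes y$. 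A direct check shows that $A\otimes I$ and $\tilde A^*$ both preserve ${\rm span}\{\tilde P_i\}$; since the eigenvalues $\{\theta^*_i\}_{i=0}^D$ of $\tilde A^*$ are mutually distinct, a Vandermonde argument gives ${\rm span}\{\tilde P_i\} = \langle A\otimes I,\tilde A^*\rangle\cdot({\bf 1}\otimes{\bf 1})$. The task is then to show that the full $\mathbb T$-orbit of ${\bf 1}^{\otimes 3}$, after projection by $E_0^{*(1)}$, does not leave this span. The key input is the identity $E_0^{*(1)}A^{(r)}E_0^{*(1)}=0$ for $r\in\{2,3\}$, proved by a one-line computation: for $x\otimes y\otimes y$, $A^{(2)}(x\otimes y\otimes y) = \sum_{\eta\in\Gamma(y)} x\otimes \eta\otimes y$ has $\partial(\eta,y)=1\neq 0$. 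Combining this with $E_0^{*(1)}A^{(r)} = E_0^{*(1)}A^{(r)}E_1^{*(1)}$ from the $S_3$-symmetric form of Lemma \ref{prop:EsAEs}, I would induct on word-length in the $\mathbb T$-generators: when the leading generator commutes with $E_0^{*(1)}$, apply the inductive hypothesis and check stability of ${\rm span}\{P_{0,i,i}\}$ (already done in the $V\otimes V$ picture); when it is $A^{(2)}$ or $A^{(3)}$, use the tridiagonal relations with $A^{*(1)}$ to reroute and collapse the $E_0^{*(1)}$-projection.

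The $Q$-case is parallel. Identify $E_0^{(1)}V^{\otimes 3}$ with $V\otimes V$ via ${\bf 1}\otimes v\otimes w\leftrightarrow v\otimes w$; then $Q_{0,i,i}$ corresponds to $R_i := \sum_x E_ix\otimes E_ix$, the generators $A^{(2)},A^{(3)},A^{*(1)}$ commute with $E_0^{(1)}$, and the obstructive generators are $A^{*(2)},A^{*(3)}$. The dual key identity is $E_0^{(1)}A^{*(r)}E_0^{(1)}=0$ for $r\in\{2,3\}$, verified from $\sum_x\theta^*_{\partial(x,z)} = |X|\langle{\bf 1},E_1z\rangle = 0$ (using $E_1{\bf 1}=0$ and Lemma \ref{lem:dpi}). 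The main obstacle in both cases is executing the induction rigorously when the non-commuting generator appears, because the tridiagonal relations are cubic rather than commutation relations and a clean straightening of arbitrary $\mathbb T$-words into a normal form is delicate; equivalently, one must show that the image of $\mathbb T$ in ${\rm End}(E_0^{*(1)}\Lambda)$ (resp.~${\rm End}(E_0^{(1)}\Lambda)$) is generated by the restrictions of just the commuting generators. Succeeding at this last step in full generality is precisely what makes the statement a conjecture rather than a theorem.
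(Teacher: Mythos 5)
The statement you were asked to prove is Conjecture~\ref{conj:PQ}, which the paper explicitly labels a \emph{conjecture}; the paper gives no proof of it, so there is nothing to compare your argument against. You yourself acknowledge this in your final sentence, and correctly so: your proposal does not constitute a proof. Let me separate what is genuinely established from what is not. The $S_3$-reduction to two base cases is valid (the tensor-factor permutation action on $V^{\otimes 3}$ conjugates $A^{(r)}\mapsto A^{(\sigma(r))}$, $A^{*(r)}\mapsto A^{*(\sigma(r))}$, fixes ${\bf 1}^{\otimes 3}$, hence stabilizes $\Lambda$, and permutes the six subspaces and the proposed bases as you say). The easy inclusions $\{P_{0,i,i}\}\subseteq E_0^{*(1)}\Lambda$ and $\{Q_{0,i,i}\}\subseteq E_0^{(1)}\Lambda$, together with their linear independence via Lemmas~\ref{lem:Pnorm}, \ref{lem:Qnorm}, are correct. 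Your side identities $E_0^{*(1)}A^{(r)}E_0^{*(1)}=0$ and $E_0^{(1)}A^{*(r)}E_0^{(1)}=0$ for $r\in\{2,3\}$ are also correct, and the derivation of the second from $\langle{\bf 1},E_1z\rangle=0$ and Lemma~\ref{lem:dpi} is clean.

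The gap is exactly where you flag it. The reverse inclusions require showing that for every $B\in\mathbb T$, the projection $E_0^{*(1)}B\,{\bf 1}^{\otimes 3}$ lands in ${\rm span}\{P_{0,i,i}\}$, and your induction on word length breaks precisely when the leading generator is $A^{(2)}$ or $A^{(3)}$. The identity $E_0^{*(1)}A^{(2)}=E_0^{*(1)}A^{(2)}E_1^{*(1)}$ (from the $S_3$-symmetric Lemma~\ref{prop:EsAEs}) pushes the computation into $E_1^{*(1)}B'\,{\bf 1}^{\otimes 3}$, a subspace whose structure the inductive hypothesis does not control; the cubic tridiagonal relations between $A^{(2)}$ and $A^{*(1)}$ do not by themselves straighten such a word back into one whose leading factor commutes with $E_0^{*(1)}$. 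Absent a normal-form theorem for $\mathbb T$ or an independent description of $E_1^{*(1)}\Lambda$, this ``rerouting'' is not an argument. So your proposal is a correct record of the two halves that do follow readily, but it leaves open the same half that the paper leaves open, and should not be read as a proof.
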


\begin{problem}\label{prob:translation}\rm  Investigate Conjectures \ref{conj:qhij}, \ref{conj:PQ} for the case in which an abelian subgroup $G$ of ${\rm Aut}(\Gamma)$ acts regularly on $X$; in this case $\Gamma$ is called a translation scheme
\cite{tanakaDual}.
\end{problem}

\noindent Before stating the next problem, we have some comments. 
For the moment, assume that $\Gamma$ is the Hamming graph $H(D,N)$ from Example \ref{ex:hypercube}. By Corollary \ref{cor:basisLam},  the fundamental $\mathbb T$-module $\Lambda$ has dimension $\binom{D+4}{4}$.
According to \cite[Corollary~3.5]{levstein}, for each vertex of $\Gamma$ the corresponding subconstituent algebra has dimension $\binom{D+4}{4}$. A vector space isomorphism from $\Lambda$ to this subconstituent
algebra, is given by \cite[Proposition~3]{tanakaCode} and Proposition \ref{prop:LamF}  above.
Based on these remarks, it is tempting to guess
 that for any vertex of any $Q$-polynomial
distance-regular graph, there is a vector space isomorphism from the corresponding subconstituent algebra to the fundamental $\mathbb T$-module $\Lambda$. However, a result about
the twisted Grassmann graph \cite[Theorem~6.2]{twist2} suggests that this isomorphism does not exist in general. 

\begin{problem}\rm Let $\Gamma$ denote a $Q$-polynomial distance-regular graph. Investigate the relationship between the subconstituent algebras of $\Gamma$
and the fundamental $\mathbb T$-module $\Lambda$. To illuminate this relationship, it might help to study the following examples: the Johnson graphs \cite[Section~9.1]{bcn}, the Grassmann graphs \cite[Section~9.3]{bcn}, and the dual polar graphs \cite[Section~9.4]{bcn}.
\end{problem}

\section{Acknowledgement} 
\noindent The author thanks Bill Martin for many helpful discussions.  The author thanks Kazumasa Nomura, for reading the manuscript carefully and sending comments.
The author thanks Hajime Tanaka, for  pointing out reference
 \cite{tanakaCode} in connection with Lemma  \ref{lem:orb} and reference \cite{tanakaDual} in connection with  Conjecture \ref{conj:qhij}.


\bigskip


\noindent Paul Terwilliger \hfil\break
\noindent Department of Mathematics \hfil\break
\noindent University of Wisconsin \hfil\break
\noindent 480 Lincoln Drive \hfil\break
\noindent Madison, WI 53706-1388 USA \hfil\break
\noindent email: {\tt terwilli@math.wisc.edu }\hfil\break

\section{Statements and Declarations}

\noindent {\bf Funding}: The author declares that no funds, grants, or other support were received during the preparation of this manuscript.
\medskip

\noindent  {\bf Competing interests}:  The author  has no relevant financial or non-financial interests to disclose.
\medskip

\noindent {\bf Data availability}: All data generated or analyzed during this study are included in this published article.

\end{document}